\theoremstyle{plain}
\newtheorem{theorem}{\bf Theorem}[section]
\newtheorem{lemma}[theorem]{\bf Lemma}
\newtheorem{proposition}[theorem]{\bf Proposition}
\newtheorem{corollary}[theorem]{\bf Corollary}
\theoremstyle{definition}
\theoremstyle{remark}
\newtheorem{remark}[theorem]{Remark}
\numberwithin{equation}{section}
\newcommand{\kk}{\mathbf{k}}
\newcommand{\xx}{\mathbf{x}}
\newcommand{\yy}{\mathbf{y}}
\newcommand{\QQ}{\mathbb{Q}}
\newcommand{\ZZ}{\mathbb{Z}}
\newcommand{\PP}{\mathbb{P}}
\newcommand{\NN}{\mathbb{N}}
\newcommand{\CC}{\mathbb{C}}
\newcommand{\B}{\mathbb{B}}
\newcommand{\WC}{\mathbb{WCOMP}}
\newcommand{\MM}{\mathcal{M}}
\newcommand{\E}{\mathcal{E}}
\newcommand{\A}{\mathcal{A}}
\newcommand{\W}{\mathcal{W}}
\newcommand{\LL}{\mathcal{L}}
\renewcommand{\S}{\mathcal{S}}
\renewcommand{\L}{\mathbf{\varLambda}}
\newcommand{\sym}{\mathfrak{S}}
\newcommand{\ninc}{\textsf{Ninc}}
\newcommand{\clr}{\mathbf{color}}
\def\newop#1{\expandafter\def\csname #1\endcsname{\mathop{\rm #1}\nolimits}}
\DeclareMathSymbol{\subfactorial}{\mathord}{operators}{"3C}
\title{The colored symmetric and exterior algebras}
\author[R. S. Gonz\'alez D'Le\'on]{Rafael S. Gonz\'alez D'Le\'on}
\address{Escuela de Ciencias Exactas e Ingenier\'ia, Universidad Sergio Arboleda, 
Bogot\'a, Colombia}
\email{rafael.gonzalezl@usa.edu.co}
\urladdr{\url{http://dleon.combinatoria.co}}
\begin{document}

\begin{abstract}
We study colored generalizations of the symmetric algebra and its Koszul dual, the exterior 
algebra. The symmetric group $\mathfrak{S}_n$ acts on the multilinear components of these algebras. 
While $\mathfrak{S}_n$ acts trivially on the multilinear components of the colored 
symmetric algebra, we use poset topology techniques to understand the representation on its Koszul 
dual. We introduce an $\mathfrak{S}_n$-poset of weighted subsets that we call the weighted boolean 
algebra and we prove that the multilinear components of the colored exterior algebra are 
$\mathfrak{S}_n$-isomorphic to the top cohomology modules of its maximal intervals.
We use a technique of Sundaram to compute group representations on Cohen-Macaulay 
posets to give a generating formula for the Frobenius series of the colored exterior 
algebra. We exploit that formula to find an explicit expression for the expansion of the 
corresponding representations in terms of irreducible $\mathfrak{S}_n$-representations. We show that 
the two colored Koszul dual algebras are Koszul in the sense of Priddy.

\smallskip
\noindent \textbf{Keywords.} Exterior algebra, Koszul algebras, Poset cohomology, Weighted 
boolean algebra, Eulerian polynomials
\end{abstract}

\maketitle

\section{Introduction}\label{section:introduction}
Let $\kk$ denote an arbitrary field of characteristic not equal $2$ and $V$ be a finite 
dimensional $\kk$-vector space. The 
\emph{tensor algebra} 
$T(V)=\bigoplus_{n\ge 0} V^{\otimes n}$ is the free associative algebra generated by $V$, where 
$V^{\otimes n}$ denotes the tensor product of $n$ copies of $V$ and where $V^{\otimes 0}:=\kk$. For 
any set $R \subseteq T(V)$ 
denote by $\langle R \rangle$ the ideal of $T(V)$ generated by $R$.
Let $R_1$ be the subspace of $V\otimes V$ generated by the set of relations of 
the form
\begin{align}
 x\otimes y - y \otimes x\quad\quad\text{(symmetry),}\label{relation:sym1}
\end{align} for all $x,y\in V$.
The \emph{symmetric algebra} $\S(V)$ is the quotient algebra $$\S(V):=T(V)/\langle R_1\rangle.$$ 
Now let $R_2 \subseteq V\otimes V$ be generated by the set of relations of the form
\begin{align}
 x\otimes y + y\otimes x\quad\quad\text{(antisymmetry),}\label{relation:antisym1}
\end{align} for all $x,y\in V$.
The \emph{exterior algebra} $\L(V)$ is the algebra $$\L(V):=T(V)/\langle R_2\rangle.$$

We will use the concatenation $xy$ to denote the image of $x \otimes y$ in $\S(V)$ and the 
\emph{wedge} $x\wedge y$ to denote the image of $x\otimes y$ in $\L(V)$ under the canonical 
epimorphisms.

Let $V^*:=\Hom(V,\kk)$ denote the vector space dual to $V$. For finite dimensional $V$ we 
have that $V^*\simeq V$. Recall that for an associative algebra 
$A=A(V,R):=T(V)/\langle R \rangle$ generated on a finite dimensional vector space $V$ and 
(quadratic) relations 
$R \subseteq V^{\otimes 2}$
there is another algebra $A^!$ associated to $A$ that is called the \emph{Koszul dual associative 
algebra} to $A$.
Indeed, when $V$ is finite dimensional, there is a canonical isomorphism $(V^{\otimes 
2})^*\simeq V^*\otimes V^*$ and we let $R^{\perp}$ be the image under this isomorphism of the space 
of elements in $(V^{\otimes 2})^*$ that vanish on $R$. The \emph{Koszul dual} $A^!$ of $A$ is the 
algebra $A^!:=A(V^*,R^{\perp})=T(V^*)/\langle R^{\perp} \rangle$. It is known and easy to check 
from the relations (\ref{relation:sym1}) and (\ref{relation:antisym1}) (see for example 
\cite{Froberg1999}) that 
$\L(V^*)$ is the Koszul dual associative algebra to $\S(V)$.

Denote $[n]:=\{1,2,\dots,n\}$ and let $V=\kk\{[n]\}$ be the vector space with generators $[n]$. We 
define the \emph{multilinear component} $\S(n)$ as the subspace of $\S(V)$ linearly generated by 
products 
of the form $\sigma(1)\sigma(2)\cdots\sigma(n)$ where $\sigma$ is a 
permutation in the symmetric group $\sym_n$. Similarly $\L(n)$ is defined to be the subspace of 
$\L(V)$ linearly generated by \emph{wedged permutations}, i.e., the 
generators are of the form $\sigma(1)\wedge\sigma(2)\wedge\cdots\wedge\sigma(n)$ for 
$\sigma\in\sym_n$. The symmetric group acts on the generators of $\S(n)$ and $\L(n)$ by permuting 
their letters and this action induces representations of $\sym_n$ in both $\S(n)$ and $\L(n)$. 
Using 
the relations 
(\ref{relation:sym1}) and (\ref{relation:antisym1}) we can see that both $\S(n)$ and 
$\L(n)$ are always one-dimensional spaces with bases given by $\{12\cdots n\}$ and $\{1\wedge 2 
\wedge \cdots \wedge n\}$ respectively. Moreover,
for $n\ge 1$ it is easy to see that
 \begin{align*}\S(n)\cong_{\sym_n}\mathbf{1}_n \text{ and } 
\L(n)\cong_{\sym_n}\sgn_n,\end{align*}
 where $\mathbf{1}_n$ and $\sgn_n$ are respectively the trivial and the sign representations of 
$\sym_n$.

\subsection{Colored symmetric and exterior algebras}
Let $\NN$ denote the set of nonnegative integers and $\PP$ the set of positive integers. For a 
subset $S\subseteq \PP$ we 
consider the set $[n]\times S$ of colored letters of the form $(x,i)$ (that we will 
denote $x^i$ across this article) where $x \in [n]$  and $i \in S$. 
Let $V=\kk\{[n]\}$ and $V^S=\kk \{[n]\times S\}$ where $S\subseteq \PP$ is 
finite  and let $CR_1\subseteq V^S\otimes V^S$ be generated by
\begin{align}
 x^i\otimes y^j - y^i \otimes x^j\quad\quad\text{(label symmetry),}\label{relation:colsym1}\\
 x^i\otimes y^j - x^j \otimes y^i\quad\quad\text{(color symmetry),}\label{relation:colsym2}
\end{align} for all $x,y\in [n]$ and $i,j \in S$.
The \emph{$S$-colored symmetric algebra} $\S_S(V)$ is defined to be the algebra 
$$\S_S(V):=T(V)/\langle CR_1 \rangle.$$ We define the 
\emph{$S$-colored exterior algebra} $\L_S(V^*)$ on $V^*$ as the Koszul dual to 
$\S_S(V)$. Explicitly, the reader can check that if we let $CR_2\subseteq V^S\otimes V^S$ be 
generated by
\begin{align}
 x^i\otimes y^i + y^i \otimes 
x^i\quad\quad\text{(monochromatic antisymmetry),}\label{relation:colantisym1}\\
 x^i\otimes y^j + y^i \otimes x^j + y^j \otimes x^i + x^j \otimes y^i  \quad\quad\text{(mixed 
antisymmetry),}\label{relation:colantisym2}
\end{align} for all $x,y\in [n]$ and $i,j \in S$, then  $$\L_S(V)=T(V)/\langle CR_2 \rangle.$$
Choosing $S=[k]$ for some $k \in \PP$ and letting $k$ grow to infinity we obtain the  
\emph{colored 
symmetric algebra} $\S_{\PP}(V)$ and the  \emph{colored exterior algebra} $\L_{\PP}(V)$ as inverse 
limits of the above constructions.
We denote $\S_{\PP}(n)$ and $\L_{\PP}(n)$ respectively the components of $\S_{\PP}(V)$ and 
$\L_{\PP}(V)$ linearly generated by colored permutations and wedged colored permutations. A 
\emph{colored permutation} is a permutation $\sigma \in \sym_n$ together with a function 
that assigns to each $x\in [n]$ a \emph{color} $\clr(x)\in \PP$. For example 
${\color{blue}2^1}{\color{orange}1^4}{\color{red}3^2}$ is a colored permutation of $[3]$ (here 
$\clr(1)=4$, $\clr(2)=1$ and $\clr(3)=2$) and so a 
generator in $\S_{\PP}(3)$. For a colored permutation $\sigma$ of $n$ let $\wedge(\sigma)$ 
denote the \emph{wedged colored permutation} $\sigma(1)\wedge\sigma(2)\wedge\cdots\wedge\sigma(n)$. 
For example 
$\wedge({\color{blue}2^1}{\color{orange}1^4}{\color{red}3^2})={\color{blue}2^1}\wedge{\color{orange}
1^4}\wedge{\color{red}3^2}$ is a generator in  
$\L_{\PP}(3)$. Let $\sym_n^S$ denote the set of colored permutations of $[n]$ with colors in $S 
\subseteq \PP$.

A \emph{weak composition} $\mu$ of $n$  is a sequence of nonnegative
integers $(\mu(1),\mu(2),\dots)$ such that $|\mu|:=\sum_{i\ge 1}\mu(i)=n$. 
Let $\wcomp$ be the set of weak compositions and $\wcomp_n$ the set of weak compositions of $n$. 
For example, $(0,1,2,0,1):=(0,1,2,0,1,0,0,\dots)$ is in $\wcomp_4$. In this work weak compositions 
will be the combinatorial device used to track the number of occurrences of each color among the 
letters of the generators.
Indeed, for $\sigma \in \sym_n^{\PP}$, let $\mu(\sigma) \in \wcomp$ be such that $\mu(\sigma)(j)$ 
is the number of letters of color $j$ in $\sigma$ for each $j$, we call $\mu(\sigma)$ the  
\emph{content} 
of $\sigma$. For example 
$\mu({\color{cyan}2^3}{\color{green}1^5}{\color{red}3^2}{\color{cyan}4^3})=(0,1,2,0,1)$. For $\mu 
\in \wcomp$ we denote by $\sym_{\mu}$ the set of colored permutations of 
content $\mu$. Define $\S(\mu)$ and $\L(\mu)$ to be respectively the subspaces 
of $\S_{\PP}(|\mu|)$ and $\L_{\PP}(|\mu|)$ generated by colored permutations and wedged 
colored permutations in $\sym_{\mu}$. For example $\S(0,1,2,0,1)$ and $\L(0,1,2,0,1)$ 
have generators associated with colored permutations of $[4]$ that contain one letter of color $2$, 
two letters of color $3$, one letter of color $5$ and no other letters. The 
symmetric group $\sym_n$ acts on $\S_{\PP}(n)$ and $\L_{\PP}(n)$ as before. A 
permutation $\tau \in \sym_n$ acts replacing the colored letter $x^i$ by the colored letter 
$\tau(x)^i$. This action preserves the colors of the generators and so $\S(\mu)$ and 
$\L(\mu)$ are also representations of $\sym_n$. Naturally if $\nu$ is a rearrangement of 
$\mu$, 
we have that $\S(\nu)\simeq_{\sym_n}\S(\mu)$ and $\L(\nu)\simeq_{\sym_n}\L(\mu)$. In 
particular, if $\mu$ has a single nonzero component then $\S(\mu)$ is isomorphic to $\S(n)$ and 
$\L(\mu)$ is isomorphic to $\L(n)$. 

For $\mu \in \wcomp_n$ define its \emph{support} $\supp(\mu)=\{j\in \PP \,\mid\,
\mu(j) \ne 0\}$. Then for $S \subseteq \PP$ we have
\begin{align*}
 \S_S(n)\simeq_{\sym_n}\bigoplus_{\substack{\mu \in \wcomp_n\\ \supp(\mu) \subseteq S}}
\S(\mu)\quad\text{ and }\quad
\L_S(n)\simeq_{\sym_n}\bigoplus_{\substack{\mu \in \wcomp_n\\ \supp(\mu) \subseteq S}}
\L(\mu).
\end{align*}

The following theorem follows immediately from relations (\ref{relation:colsym1}) and 
(\ref{relation:colsym2}).
\begin{theorem}\label{theorem:Smu_trivial}
 For $n\ge 1$ and $\mu \in \wcomp_n$,
 $$\S(\mu)\cong_{\sym_n}\mathbf{1}_{n}.$$
\end{theorem}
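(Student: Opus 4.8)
The plan is to show that $\S(\mu)$ is one-dimensional and that $\sym_n$ acts trivially on it, which together give $\S(\mu)\cong_{\sym_n}\mathbf{1}_n$. First I would observe that the relations (\ref{relation:colsym1}) and (\ref{relation:colsym2}) let us manipulate any generator $\sigma(1)\sigma(2)\cdots\sigma(n)$ of $\S(\mu)$ into a canonical form. Concretely, fix $\mu$ and let $c_1\le c_2\le\cdots\le c_n$ be the multiset of colors dictated by $\mu$ (so color $j$ appears $\mu(j)$ times). I claim that inside $\S_\PP(n)$ every generator in $\sym_\mu$ equals the single monomial $1^{c_1}2^{c_2}\cdots n^{c_n}$.

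To prove the claim, I would use the two relations as rewriting rules on adjacent pairs. Relation (\ref{relation:colsym2}) says $x^i y^j = y^i x^j$ modulo the ideal, i.e. we may swap the two \emph{colors} of an adjacent pair while keeping the underlying letters in place; relation (\ref{relation:colsym1}) says $x^i y^j = x^j y^i$ — wait, reading carefully, (\ref{relation:colsym1}) is $x^i\otimes y^j - y^i\otimes x^j$, so it swaps the underlying \emph{letters} $x,y$ while keeping the color pattern $(i,j)$ in place, and (\ref{relation:colsym2}) is $x^i\otimes y^j - x^j\otimes y^i$, which swaps the \emph{colors} $i,j$ while keeping the letters $(x,y)$ in place. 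Composing these two transpositions on an adjacent pair realizes the full transposition $x^i y^j = y^j x^i$ (swap letters, then swap colors), so adjacent generators of the \emph{uncolored} symmetric algebra structure commute; but more is true: using (\ref{relation:colsym1}) and (\ref{relation:colsym2}) independently we can sort the letters $\sigma(1),\dots,\sigma(n)$ into increasing order $1,2,\dots,n$ using (\ref{relation:colsym1}) to carry along whatever color pattern is present, and separately we can sort the colors into weakly increasing order using (\ref{relation:colsym2}). The key point is that (\ref{relation:colsym2}) lets the color multiset be redistributed among positions independently of the letters. Thus any generator reduces to $1^{c_1}2^{c_2}\cdots n^{c_n}$, showing $\dim \S(\mu)\le 1$; and since $\S(\mu)$ is a nonzero quotient (the monomial $1^{c_1}\cdots n^{c_n}$ survives — this can be checked by exhibiting a nonzero linear functional, or by the standard PBW-type basis for a symmetric-type algebra), we get $\dim\S(\mu)=1$.

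Next I would verify the triviality of the $\sym_n$-action. A permutation $\tau\in\sym_n$ sends the generator $1^{c_1}2^{c_2}\cdots n^{c_n}$ to $\tau(1)^{c_1}\tau(2)^{c_2}\cdots\tau(n)^{c_n}$, which is again a generator in $\sym_\mu$ (the colors are untouched, only the letters are permuted), hence by the canonical-form claim it equals $1^{c_1}2^{c_2}\cdots n^{c_n}$ in $\S(\mu)$. So $\tau$ fixes the basis element, and the action is trivial; combined with $\dim\S(\mu)=1$ this yields $\S(\mu)\cong_{\sym_n}\mathbf{1}_n$.

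The main obstacle is not the upper bound $\dim\S(\mu)\le 1$, which is a routine rewriting argument, but rather making sure $\S(\mu)\ne 0$, i.e. that the rewriting does not collapse the monomial to zero — in a symmetric-type (as opposed to exterior-type) algebra this should never happen, but one must either cite a PBW/normal-form theorem for the algebra $\S_\PP(V)$, or directly construct a nonzero $\sym_n$-equivariant functional on the free algebra that vanishes on $\langle CR_1\rangle$ and is nonzero on $1^{c_1}\cdots n^{c_n}$. I would prefer the functional approach for self-containedness: send a monomial $a_1^{d_1}\cdots a_n^{d_n}$ to $1$ if $\{d_1,\dots,d_n\}$ (as a multiset, together with the multiset $\{a_1,\dots,a_n\}=[n]$) matches $\mu$ and to $0$ otherwise, then check it kills both families of relations. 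Since the statement is asserted to follow "immediately" from the relations, I expect the author's proof to be brief and to elide the nonvanishing point.
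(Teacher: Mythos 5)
Your proposal is correct and takes exactly the approach the paper intends; the paper offers no explicit proof, asserting only that the result "follows immediately" from relations (\ref{relation:colsym1}) and (\ref{relation:colsym2}), and your argument spells out precisely that immediacy. You are right that the only point requiring a moment's care is nonvanishing of $\S(\mu)$, and your proposed functional (or the observation that $\S_S(n)$ is spanned by, and in bijection with, colored subsets $A^\mu$, which the paper uses later in Section 5 when identifying $\overline\S_S$ as a c-monoid) settles it.
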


Our goal is to understand the more interesting representation of $\sym_n$ on $\L(\mu)$ for 
all 
$\mu \in \wcomp$. In order to accomplish this we are going to apply the program started by Hanlon 
and Wachs in \cite{HanlonWachs1995} and by Wachs in \cite{Wachs1998}. They applied poset topology 
techniques to the partially ordered set (or \emph{poset}) $\Pi_n$ of partitions  of the set $[n]$, 
and to the induced subposet of $\Pi_n$ where all partitions have parts of size congruent to $1$ mod 
$(k-1)$, in order to understand algebraic properties of the multilinear components of the free Lie 
algebra and the free Lie $k$-algebra (see also \cite{Barcelo1990}). Gottlieb and Wachs 
\cite{GottliebWachs2000} have extended 
the results on the poset of partitions to more general Dowling lattices. 
The author and Wachs 
\cite{DleonWachs2013a} and the author \cite{Dleon2014} have applied similar techniques to a family 
of posets of weighted partitions in their study of the operad of Lie algebras with multiple 
compatible brackets and its Koszul dual operad, the operad of commutative algebras with multiple 
totally commutative products (see also \cite{DotsenkoKhoroshkin2007,Liu2010}). The original 
motivation for the present work is precisely the study 
of analogous constructions within the category of connected graded associative algebras.

The main idea of the technique 
in \cite{Dleon2014,DleonWachs2013a,GottliebWachs2000,HanlonWachs1995,Wachs1998} is that in 
order to study the representation of $\sym_n$ on the multilinear component $A(n)$ of certain 
algebra 
$A$, a certain poset $P_A$ is constructed so that the (co)homology of $P_A$ (defined later) is 
$\sym_n$-isomorphic to $A(n)$ (maybe up to tensoring with the sign representation). Then poset 
topology techniques applied to $P_A$ can recover algebraic information about $A(n)$. 

\section{Main results}\label{section:mainresults}
To every poset $P$ one can associate a simplicial complex $\Delta(P)$ (called the \emph{order 
complex}) whose faces are the chains (totally ordered subsets) of $P$. If there is a group $G$ 
acting on the poset in such a way that every $g\in G$ is a (strict) order preserving map on $P$ 
then this 
action induces isomorphic representations of $G$ on the $j$-th reduced simplicial
homology $\widetilde H_{j}(P)$ and cohomology $\widetilde H^{j}(P)$  of the order complex 
$\Delta(P)$ over the field $\kk$. Let $P$ be 
\emph{bounded} (it has a unique minimal element, denoted $\hat{0}$, and a unique maximal 
element, denoted $\hat{1}$) and \emph{pure} (all the maximal chains have the 
same length). If for every open interval $(x,y)$ in $P$ it happens that $\widetilde H_i((x,y)) = 
0$ for all $x <y$ in $P$ and $i < l([x,y])-2$ we say that $P$ is \emph{Cohen-Macaulay}. Some poset 
topology techniques on pure bounded posets, like the theory of lexicographic shellability, imply 
Cohen-Macaulayness (see \cite{Wachs2007}). 

Recall that the \emph{boolean algebra} $\B_n$ is the poset of subsets of $[n]$ 
ordered by inclusion. 
It is known that 
$\B_n$ is Cohen-Macaulay (see for example \cite{Bjorner1980}) and there is a natural action of 
$\sym_n$ on 
$\B_n$ permuting the elements in $[n]$. This action induces a representation of $\sym_n$ on the 
unique nonvanishing reduced 
simplicial cohomology $\widetilde 
H^{n-2}(\overline{\B}_n)$  of 
the proper part $\overline{\B}_n:=\B_n \setminus \{\hat{0},\hat{1}\}$ of $\B_n$.
The following isomorphism is already a classical result (see  
\cite{Solomon1968}),
 \begin{align}\label{equation:isosolomon}\widetilde H^{n-2}(\overline{\B}_n)\cong_{\sym_n}
\sgn_n \quad(\cong_{\sym_n} \L(n)).\end{align}

To use the technique mentioned above we need to construct the poset $P_A$ 
associated to the algebra $A$ such that the (co)homology of $P_A$ is 
$\sym_n$-isomorphic to $A(n)$ . For certain algebras there is a recipe to cook up the 
poset $P_A$. M\'endez and 
Yang 
\cite{MendezYang1991} have developed a way to associate a family of posets to an 
injective monoid in the monoidal categories of species with respect to the operations of 
product and composition. Vallette \cite{Vallette2007} has 
rediscovered the construction in \cite{MendezYang1991} for basic set operads (injective monoids 
with respect to composition) and used it to develop a criterion for the Koszulness of the 
associated operad and its Koszul dual operad by checking that the maximal intervals in the 
associated 
posets are Cohen-Macaulay (see also Fresse \cite{Fresse2004}). An operad is an 
algebraic structure that encodes types of algebras (see \cite{LodayVallette2012}). In analogy with 
\cite{Vallette2007}, Mendez in
\cite{Mendez2010} has developed a criterion for Koszulness of an associative algebra (with the 
left cancellative property) and its 
Koszul dual associative algebra. 

We will be following very closely the thread of ideas in \cite{Dleon2014}. We will 
recall here some of the concepts involved while referring the reader to consult \cite{Dleon2014} 
for 
most of the background and related notation.

\subsection{The weighted boolean algebra}
For undefined poset notation and terminology the reader can consult \cite{Stanley2012}.
Let $\WC$ be the 
partially ordered set of weak compositions with order relation defined as follows: for every 
$\nu,\mu \in \wcomp$, we say that $\mu \le \nu$ if $\mu(i)\le \nu(i)$
for every $i$. We define $\WC_n$ to be the induced subposet of $\WC$ whose elements are weak 
compositions $\mu \in \wcomp$ with $|\mu| \le n$.
A \emph{weighted subset} of $[n]$ is a set $B^{\mu}$ where $B \subseteq [n]$ and $\mu \in 
\wcomp_{|B|}$. 
Since weak compositions are infinite vectors we can use component-wise addition 
and subtraction, for instance, we denote by $\nu + \mu$, the weak composition defined by 
$(\nu + \mu)(i):=\nu(i) + \mu(i)$.

The \emph{weighted boolean algebra} $\B_n^{w}$  is the partially ordered set (poset) of 
weighted subsets of $[n]$ with  order relation given by $A^{\mu}\le B^{\nu}$ if the 
following conditions hold:

\begin{itemize}
 \item $A \le B$ in $\B_n$ and,
 \item $\mu \le \nu$ in $\WC_n$.
\end{itemize}

Equivalently, we can define the cover relation $A^{\mu}\lessdot B^{\nu}$ by:

\begin{itemize}
 \item $A \lessdot B$ in $\B_n$ and,
 \item $\nu-\mu=\bf{e}_r$ for some $r \in 
\PP$, where
$\bf{e}_r$ is
the weak composition
 with a $1$ in the $r$-th component and $0$ in all other entries.
\end{itemize}

For $S \in \PP$ we also denote by $\B_n^S$ the induced subposet of $\B_n^w$ whose elements are 
weighted subsets $B^\mu$ with $\supp(\mu)\subseteq S$.

In Figure~\ref{figure:weightedbooleanposetn3k2} we illustrate the \emph{Hasse diagram} of the 
poset $\B_3^{[2]}$.

\begin{figure}[ht]
\begin{center} 
\begin{tikzpicture}[line join=bevel,scale=0.6]

\tikzstyle{every node}=[inner sep=0pt, scale=0.7, minimum width=4pt]

\node (nempty) at (0,0) {$\emptyset^{(0,0)}$};

\node (n1-10) at (-5,3) {$1^{(1,0)}$};
\node (n2-10) at (-3,3) {$2^{(1,0)}$};
\node (n3-10) at (-1,3) {$3^{(1,0)}$};
\node (n1-01) at (1,3) {$1^{(0,1)}$};
\node (n2-01) at (3,3) {$2^{(0,1)}$};
\node (n3-01) at (5,3) {$3^{(0,1)}$};

\node (n12-20) at (-8,6) {$12^{(2,0)}$};
\node (n13-20) at (-6,6) {$13^{(2,0)}$};
\node (n23-20) at (-4,6) {$23^{(2,0)}$};
\node (n12-11) at (-2,6) {$12^{(1,1)}$};
\node (n13-11) at (0,6) {$13^{(1,1)}$};
\node (n23-11) at (2,6) {$23^{(1,1)}$};
\node (n12-02) at (4,6) {$12^{(0,2)}$};
\node (n13-02) at (6,6) {$13^{(0,2)}$};
\node (n23-02) at (8,6) {$23^{(0,2)}$};

\node (n123-30) at (-6,9) {$123^{(3,0)}$};
\node (n123-21) at (-2,9) {$123^{(2,1)}$};
\node (n123-12) at (2,9) {$123^{(1,2)}$};
\node (n123-03) at (6,9) {$123^{(0,3)}$};

\draw (nempty)--(n1-10);
\draw (nempty)--(n2-10);
\draw (nempty)--(n3-10);
\draw (nempty)--(n1-01);
\draw (nempty)--(n2-01);
\draw (nempty)--(n3-01);

\draw (n1-10)--(n12-20);
\draw (n1-10)--(n12-11);
\draw (n1-10)--(n13-20);
\draw (n1-10)--(n13-11);

\draw (n2-10)--(n12-20);
\draw (n2-10)--(n12-11);
\draw (n2-10)--(n23-20);
\draw (n2-10)--(n23-11);

\draw (n3-10)--(n23-20);
\draw (n3-10)--(n23-11);
\draw (n3-10)--(n13-20);
\draw (n3-10)--(n13-11);

\draw (n1-01)--(n12-02);
\draw (n1-01)--(n12-11);
\draw (n1-01)--(n13-02);
\draw (n1-01)--(n13-11);

\draw (n2-01)--(n12-02);
\draw (n2-01)--(n12-11);
\draw (n2-01)--(n23-02);
\draw (n2-01)--(n23-11);

\draw (n3-01)--(n23-02);
\draw (n3-01)--(n23-11);
\draw (n3-01)--(n13-02);
\draw (n3-01)--(n13-11);

\draw (n12-20)--(n123-30);
\draw (n12-20)--(n123-21);
\draw (n13-20)--(n123-30);
\draw (n13-20)--(n123-21);
\draw (n23-20)--(n123-30);
\draw (n23-20)--(n123-21);

\draw (n12-11)--(n123-21);
\draw (n12-11)--(n123-12);
\draw (n13-11)--(n123-21);
\draw (n13-11)--(n123-12);
\draw (n23-11)--(n123-21);
\draw (n23-11)--(n123-12);

\draw (n12-02)--(n123-12);
\draw (n12-02)--(n123-03);
\draw (n13-02)--(n123-12);
\draw (n13-02)--(n123-03);
\draw (n23-02)--(n123-12);
\draw (n23-02)--(n123-03);

\end{tikzpicture}
\end{center}
\caption{$\B_3^{[2]}$}\label{figure:weightedbooleanposetn3k2}
\end{figure}

We can define $\B_n^w$ in a different way. Note that both posets $\B_n$ and $\WC_n$ are ranked and 
hence have well-defined poset maps $rk:\B_n\rightarrow C_{n+1}$ and
$rk:\WC_n\rightarrow C_{n+1}$ to the chain with $n+1$ elements $C_{n+1}$. Recall that the 
\emph{Segre or fiber
product} $\displaystyle P\underset{f,g}{\times}Q$ of
two poset maps $f:P \rightarrow R$ and $g:Q \rightarrow R$ is the induced subposet of the product
$P \times Q$ with elements $\{(p,q)\arrowvert \, f(p)=g(q)\}$. Then we have that 
$\B_n^w=\B_n\underset{rk,rk}{\times}\WC_n$. 

The poset $\B_n^w$ has a minimum element $\hat 0:=\varnothing^{\mathbf{0}}$ and maximal elements 
 $[n]^{\mu}:=\{[n]^{\mu}\}$ indexed by 
weak compositions $\mu \in \wcomp_{n}$.
Note that for every $\nu, \mu \in \wcomp_{n}$ such that $\nu$ is a rearrangement of $\mu$, the 
maximal intervals $[\hat 0, [n]^{\nu}]$  and $[\hat 0, [n]^{\mu}]$ are isomorphic to each other.
In particular, if $\mu$ has a single nonzero component, these intervals are isomorphic to 
$\B_n$, hence $\B_n^{[1]} \simeq \B_n$. 
In the case when $S=[2]$ the poset $B_n^{[2]}$ is isomorphic to a poset introduced by Shareshian 
and Wachs in \cite{ShareshianWachs2009} and it is closely related to a poset of Bj\"orner and Welker 
in
 \cite{BjornerWelker2005}.

The symmetric group $\sym_n$ acts on $\B_n^w$ in the following way: for any $B^{\mu}\in 
\B_n^w$ and $\tau \in 
\sym_n$ we have $\tau B^{\mu}:=(\tau B)^{\mu}$ where $\tau B:=\{\tau(i) \,\mid\, i \in B\}$. 
Since any $\tau \in \sym_n$ is a strict order preserving morphism, the action of $\sym_n$ on 
$\B_n^w$ induces an action on the unique nonzero reduced (co)homology $\widetilde H ^{n-2}
((\hat{0},[n]^{\mu}))$ of the open maximal interval $(\hat{0},[n]^{\mu})$ of $\B_n^w$. In Section 
\ref{section:isomorphism} we prove the 
following isomorphism (Theorem \ref{theorem:explicitisomorphism}).

\begin{theorem}\label{theorem:isomorphism}For $n\ge 0$ and $\mu \in \wcomp_{n}$,
 \begin{align*}  
 \L(\mu) \simeq_{\sym_n}  \widetilde H^{n-2}((\hat 0, [n]^{\mu})). 
\end{align*}
\end{theorem}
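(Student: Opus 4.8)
\emph{Proof strategy.} The plan is to present both $\L(\mu)$ and $\widetilde H^{n-2}((\hat0,[n]^{\mu}))$ as the quotient of one and the same permutation module of $\sym_n$ by one and the same family of relations, so that the identity on generators yields the isomorphism (which will be the explicit map of Theorem \ref{theorem:explicitisomorphism}). Throughout I would assume $n\ge 2$; for $n\le 1$ both sides are one-dimensional and the statement reduces to the standard conventions for the reduced cohomology of the empty/void complex. The first ingredient is the description of the top cohomology of a bounded poset by its maximal chains (see \cite{Wachs2007}, and \cite{Dleon2014} for exactly this use): if $P$ is bounded of length $\ell$, then, since $\Delta(\overline P)$ has no $(\ell-1)$-simplices, $\widetilde H^{\ell-2}(\overline P)\cong C^{\ell-2}/\operatorname{im}\delta^{\ell-3}$, so writing a maximal chain as $c\colon\hat0=c_0\lessdot c_1\lessdot\cdots\lessdot c_\ell=\hat1$ the group $\widetilde H^{\ell-2}(\overline P)$ is the span of maximal chains modulo the \emph{straightening relations}
\[
\sum_{c_{i-1}\lessdot x\lessdot c_{i+1}}\bigl(c_0\lessdot\cdots\lessdot c_{i-1}\lessdot x\lessdot c_{i+1}\lessdot\cdots\lessdot c_\ell\bigr)=0,\qquad 1\le i\le \ell-1,
\]
the signs produced by $\delta^{\ell-3}$ being uniform over the sum because every inserted $x$ occupies the same position in the chain. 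This presentation is natural in $P$, hence $\sym_n$-equivariant once we take $P=[\hat0,[n]^{\mu}]$, which has length $n$.

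Next I would set up the combinatorial dictionary. A cover relation $A^{\alpha}\lessdot B^{\beta}$ in $\B_n^{w}$ adjoins one new element of $[n]$ to $A$ and one new vector $\mathbf{e}_r$ to $\alpha$, so a maximal chain of $[\hat0,[n]^{\mu}]$ is precisely a permutation $x_1x_2\cdots x_n$ of $[n]$ together with a sequence of colors $r_1,\dots,r_n$ whose multiset has content $\mu$; reading $r_t$ as the color of $x_t$ this is exactly a colored permutation $\sigma_c\in\sym_{\mu}$, and $\sym_n$ acts compatibly on both pictures by permuting values. Hence $c\mapsto\wedge(\sigma_c)$ is an $\sym_n$-equivariant bijection from the maximal chains of $[\hat0,[n]^{\mu}]$ onto the spanning set of wedged colored permutations of $\L(\mu)$, and extends to an $\sym_n$-equivariant surjection $\phi\colon C^{n-2}\twoheadrightarrow\L(\mu)$.

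The heart of the argument is to match the two families of relations. I would examine an arbitrary length-two interval $[A^{\alpha},C^{\gamma}]$ of $\B_n^{w}$: writing $C\setminus A=\{a,b\}$ and $\gamma-\alpha=\mathbf{e}_p+\mathbf{e}_q$, its middle elements are the $(A\cup\{a\})^{\alpha+\mathbf{e}_s}$ and $(A\cup\{b\})^{\alpha+\mathbf{e}_s}$ with $s\in\{p,q\}$. When $p=q$ the interval is a diamond with two middle elements, and the straightening relation at the corresponding rank reads $\cdots\wedge a^{p}\wedge b^{p}\wedge\cdots+\cdots\wedge b^{p}\wedge a^{p}\wedge\cdots=0$, which is monochromatic antisymmetry \eqref{relation:colantisym1}. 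When $p\ne q$ there are four middle elements; tracking, in each, which of the two colors is consumed on the second cover step, the straightening relation becomes $\cdots\wedge a^{p}\wedge b^{q}\wedge\cdots+\cdots\wedge a^{q}\wedge b^{p}\wedge\cdots+\cdots\wedge b^{p}\wedge a^{q}\wedge\cdots+\cdots\wedge b^{q}\wedge a^{p}\wedge\cdots=0$, which is mixed antisymmetry \eqref{relation:colantisym2}. Conversely, $T(V)/\langle CR_2\rangle$ is homogeneous for the bigrading that records, for a colored word, its multiset of underlying values and its color content, and both \eqref{relation:colantisym1} and \eqref{relation:colantisym2} preserve this bigrading; therefore the multilinear component $\L(\mu)$ is the span of wedged colored permutations of content $\mu$ modulo the subspace generated by internal substitutions of \eqref{relation:colantisym1} and \eqref{relation:colantisym2} into two adjacent letters — exactly the straightening relations above. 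Hence $\phi$ descends to an $\sym_n$-isomorphism $\widetilde H^{n-2}((\hat0,[n]^{\mu}))\xrightarrow{\ \sim\ }\L(\mu)$.

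I expect the main obstacle to be the bookkeeping inside the four-middle-element intervals: for each of the four saturated $2$-chains through $[A^{\alpha},C^{\gamma}]$ one must keep straight which new element is adjoined first and which color is attached at each step, and then verify that the four resulting wedged colored permutations are precisely the four monomials of \eqref{relation:colantisym2}, with no stray sign coming from the coboundary. A secondary technical point that needs care is the (standard but not entirely automatic) claim that passing to the multilinear component commutes with the quotient by $\langle CR_2\rangle$, which is exactly where homogeneity of $CR_2$ for the value-multiset/content bigrading is invoked.
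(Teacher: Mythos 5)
Your proposal is correct and takes essentially the same route as the paper: it identifies the generators of both $\L(\mu)$ and $\widetilde H^{n-2}((\hat0,[n]^{\mu}))$ with colored permutations in $\sym_{\mu}$ (Proposition \ref{proposition:bijectionchainscoloredpermutations}), and then matches the length-two intervals of $\B_n^{w}$ to the two colored antisymmetry relations — the diamond (Type I) to \eqref{relation:colantisym1} and the four-middle-element interval (Type II) to \eqref{relation:colantisym2} — exactly as in Theorem \ref{theorem:cohrelations} and Proposition \ref{proposition:lrelations}, from which Theorem \ref{theorem:explicitisomorphism} follows. Your remarks on the uniformity of signs in the coboundary, and on the homogeneity of $CR_2$ for the value-multiset/content bigrading, are the right points to flag and are consistent with (if slightly more explicit than) what the paper takes for granted.
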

 
An \emph{EL-labeling} is a certain type of labeling of the edges of the \emph{Hasse 
diagram} of a poset $P$ that satisfy certain requirements (defined in 
Section~\ref{section:ellabeling}). EL-labelings were introduced by Bj\"orner \cite{Bjorner1980} and 
further developed by Bj\"orner and Wachs \cite{BjornerWachs1983}.  A poset $P$ that admits an 
EL-labeling is said to be \emph{EL-shellable} and EL-shellability has important topological 
implications on its order complex $\Delta(P)$.
Let $\widehat{\B_n^w}:=\B_n^w \cup \{\hat{1}\}$ be the poset $\B_n^w$ after a maximal element has 
been added. In Section \ref{section:homotopytype} we prove the following theorem.
\begin{theorem}\label{theorem:ellabeling}
$\widehat{\B_n^w}$ is EL-shellable and hence Cohen-Macaulay. 
\end{theorem}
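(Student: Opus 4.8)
The plan is to exhibit an explicit EL-labeling of $\widehat{\B_n^w}$ and then invoke the standard chain of implications EL-shellable $\Rightarrow$ shellable $\Rightarrow$ Cohen--Macaulay: an EL-labeling of a poset restricts to an EL-labeling of every closed interval, so every open interval has a shellable order complex, which yields Cohen--Macaulayness over $\kk$. Observe first that $\widehat{\B_n^w}$ is bounded and pure of length $n+1$, since every maximal chain has the form $\hat 0 \lessdot \cdots \lessdot [n]^{\mu} \lessdot \hat 1$ with the portion up to $[n]^{\mu}$ of length $n$. For the labeling, recall that a cover $A^{\mu}\lessdot B^{\nu}$ in $\B_n^w$ determines a unique $x\in[n]\setminus A$ with $B=A\cup\{x\}$ and a unique $r\in\PP$ with $\nu=\mu+\mathbf e_r$. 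I would set $\lambda(A^{\mu}\lessdot B^{\nu}):=(x,r)$ and $\lambda([n]^{\mu}\lessdot\hat 1):=(n,1)$, regard the labels as lying in $[n]\times\PP$ equipped with the \emph{product} partial order, and use the lexicographic order merely as a fixed linear extension for the ``lexicographically first'' clause. (In $\B_n^S$ the top label should read $(n,\min S)$; under the identification $\B_n^{[1]}\simeq\B_n$ this restricts to the classical labeling of the boolean algebra.)

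The main point is a bookkeeping observation: for an interval $[A^{\mu},B^{\nu}]$ contained in $\B_n^w$, a maximal chain amounts exactly to a listing $x_1,\dots,x_k$ of $B\setminus A$ together with a listing $r_1,\dots,r_k$ of the multiset in which a color $j$ occurs $\nu(j)-\mu(j)$ times, and these two listings may be chosen independently; the associated label sequence is $(x_1,r_1),\dots,(x_k,r_k)$. Since the $x_i$ are distinct, $(x_i,r_i)\le(x_{i+1},r_{i+1})$ in the product order says $x_i<x_{i+1}$ and $r_i\le r_{i+1}$, so a maximal chain is increasing exactly when its element listing is strictly increasing and its color listing is non-decreasing. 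The former pins the element listing down to the sorted order of $B\setminus A$ and the latter pins the color listing down to the sorted rearrangement of the multiset, so there is a unique increasing maximal chain; a short greedy induction (at each rank the lexicographically smallest available label is the one taken by this chain) will show it is also lexicographically first. Thus $\lambda$ satisfies the EL conditions on every interval lying inside $\B_n^w$.

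It remains to treat the intervals meeting $\hat 1$. The intervals $[[n]^{\mu},\hat 1]$ are two-element chains, so nothing is required. For $[A^{\mu},\hat 1]$ with $|A|<n$, a maximal chain is a maximal chain of some $[A^{\mu},[n]^{\nu}]$ (with $\nu\ge\mu$, $|\nu|=n$, $\supp\nu\subseteq S$) followed by the cover $[n]^{\nu}\lessdot\hat 1$; by the previous paragraph the only candidate for an increasing one uses the increasing chain of $[A^{\mu},[n]^{\nu}]$, whose last label is $(\max([n]\setminus A),\,\max\supp(\nu-\mu))$. Appending $\lambda([n]^{\nu}\lessdot\hat 1)=(n,1)$ keeps the sequence weakly increasing exactly when $\max\supp(\nu-\mu)\le 1$, that is, only for $\nu=\mu+(n-|A|)\mathbf e_1$; so there is again a unique increasing maximal chain, and the same greedy argument---now using that incrementing color $1$ on the smallest available element is always an allowed and lexicographically least move---shows it is lexicographically first. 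This completes the verification that $\lambda$ is an EL-labeling of $\widehat{\B_n^w}$, and the theorem follows.

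The step I expect to be most delicate is the choice of order on the labels. Making either coordinate primary in a lexicographic order destroys uniqueness of the increasing chain on intervals with mixed color content---two ``sorted'' maximal chains then survive as increasing---which is precisely the reason for using the product order; the cost is that the lexicographic-minimality clause must be checked by hand, and that the label of the adjoined top cover must be tuned so that exactly one completion to $\hat 1$ remains increasing. Establishing the independence of the two listings and carrying out these two checks is where the real work lies.
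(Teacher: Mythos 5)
Your approach is the same as the paper's: label each cover in $\B_n^w$ by the pair (new element, new color), put the product order on the labels, reduce to top intervals $[\hat 0,[n]^\mu]$ and $[\hat 0,\hat 1]$ via the canonical isomorphism $[A^\nu,(A\cup B)^{\nu+\mu}]\cong[\hat 0,B^\mu]$, and analyze the sorted chain. The one place you diverge, however, contains a genuine error: you label the top edge $[n]^\mu\lessdot\hat 1$ by $(n,1)$ inside the label poset $[n]\times\PP$, whereas the paper uses $(n+1)^1$ inside $[n+1]\times\PP$. Your choice breaks the EL condition on the interval $[\hat 0,\hat 1]$. Any increasing maximal chain there must first traverse some $[\hat 0,[n]^\nu]$; because the first coordinates of those $n$ labels are the distinct elements of $[n]$, strict increase in the product order forces the label word to be $1^{r_1}2^{r_2}\cdots n^{r_n}$ with $r_1\le\cdots\le r_n$, and the last of these is $(n,r_n)$. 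But $(n,r_n)<(n,1)$ in the product order would require $r_n<1$, which is impossible, so with your top label there is \emph{no} increasing maximal chain in $[\hat 0,\hat 1]$ at all. Raising the first coordinate of the top label to $n+1$ (as the paper does) fixes this: $(n,r_n)<(n+1,1)$ holds precisely when $r_n\le 1$, i.e.\ $r_n=1$, giving the unique increasing chain $1^1 2^1\cdots n^1(n+1)^1$, which is also lexicographically first.

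A related slip: in your final paragraph you test when ``the sequence remains \emph{weakly} increasing'' after appending the top label, but the EL definition in use requires the unique increasing chain to have a \emph{strictly} increasing label word (this is exactly what makes the $(n,1)$ choice fail). With the corrected top label $(n+1,1)$ and the strict-increase criterion applied consistently, the rest of your argument — independence of the element listing and the color multiset, uniqueness of the sorted chain on each $[\hat 0,B^\mu]$, the greedy check for lexicographic minimality — goes through and matches the paper's proof of Theorem \ref{theorem:ellabelingposet}.
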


An \emph{ascent} in a colored permutation $\sigma \in \sym_n^{\PP}$ is a value $i \in [n-1]$ such 
that 
\begin{itemize}
 \item there is a regular ascent at position $i$ in the underlying 
uncolored permutation 
$\tilde \sigma \in \sym_n$, i.e., $\tilde \sigma(i) <\tilde \sigma(i+1)$; and
\item $\clr(\sigma(i))\le\clr(\sigma(i+1))$.
\end{itemize}

A \emph{nonincreasing colored permutation} is a colored permutation $\sigma \in \sym_n^{\PP}$  that 
is ascent-free. 

For example ${\color{blue}2^1}{\color{orange}1^4}{\color{red}3^2}$ is a nonincreasing colored 
permutation but ${\color{blue}2^1}{\color{red}1^2}{\color{orange}3^4}$  is not since 
the pair $({\color{red}1^2},{\color{orange}3^4})$ forms an ascent. Let $\ninc_n$ be the set of 
nonincreasing colored permutations and $\ninc_{\mu}$ the ones with content $\mu$. In Section 
\ref{section:homotopytype} using the 
EL-labeling mentioned in Theorem \ref{theorem:ellabeling} and results in the theory of 
lexicographic 
shellability (\cite{Bjorner1980,BjornerWachs1983}) we obtain the following algebraic information.

\begin{theorem} \label{theorem:dimension} For $\mu \in \wcomp$ the set
$$\{\wedge(\sigma)\,\mid\, \sigma \in \ninc_{\mu}\}$$ is a basis for $\L(\mu)$. 
Consequently,
 $$\dim\L(\mu) = |\ninc_{\mu}|.$$
\end{theorem}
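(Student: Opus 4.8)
The plan is to compute the dimension and, in fact, an explicit basis of the cohomology module $\widetilde H^{n-2}((\hat 0,[n]^\mu))$ directly from the EL-labeling of $\widehat{\B_n^w}$ furnished by Theorem~\ref{theorem:ellabeling}, and then transport the answer to $\L(\mu)$ through the $\sym_n$-isomorphism of Theorem~\ref{theorem:isomorphism} (in the explicit form of Theorem~\ref{theorem:explicitisomorphism}). First I would note that, since $[n]^\mu$ is a maximal element of $\B_n^w$, the closed interval $[\hat 0,[n]^\mu]$ is the same whether computed in $\B_n^w$ or in $\widehat{\B_n^w}$, and that the restriction of an EL-labeling of a bounded poset to any closed interval is again an EL-labeling (see~\cite{BjornerWachs1983}); hence $[\hat 0,[n]^\mu]$ is EL-shellable. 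This interval is pure of length $n$: each cover relation of $\B_n^w$ simultaneously enlarges the underlying set by one element and increases the weight vector by one unit, so every maximal chain from $\hat 0$ to $[n]^\mu$ has exactly $n$ steps, and the order complex of the open interval $(\hat 0,[n]^\mu)$ therefore has dimension $n-2$.

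Next I would apply the basic theorem on EL-shellable posets (\cite{Bjorner1980,BjornerWachs1983}, see also~\cite{Wachs2007}): for a bounded EL-shellable poset $Q$ of length $\ell$, the order complex of its proper part is homotopy equivalent to a wedge of $(\ell-2)$-spheres indexed by the \emph{falling} maximal chains of $Q$ — the maximal chains $\hat 0=y_0\lessdot y_1\lessdot\cdots\lessdot y_\ell=\hat 1$ whose sequence of edge labels is strictly decreasing — and the corresponding cocycles form a basis of $\widetilde H^{\ell-2}$ of the proper part. Taking $Q=[\hat 0,[n]^\mu]$, this yields a basis of $\widetilde H^{n-2}((\hat 0,[n]^\mu))$ indexed by the falling maximal chains of $[\hat 0,[n]^\mu]$.

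The core step is then a combinatorial identification of these falling maximal chains. Every maximal chain $\hat 0=x_0\lessdot x_1\lessdot\cdots\lessdot x_n=[n]^\mu$ records, at its $i$-th step, a new element $a_i\in[n]$ adjoined to the underlying set together with the color $r_i\in\PP$ by which the weight vector grows; since the colors accumulated along the chain must total $\mu$, the word $a_1^{r_1}a_2^{r_2}\cdots a_n^{r_n}$ is a colored permutation of content $\mu$, and this assignment is a bijection between the maximal chains of $[\hat 0,[n]^\mu]$ and $\sym_\mu$. I would then evaluate the EL-labeling of Theorem~\ref{theorem:ellabeling} along such a chain and verify that its label sequence is strictly decreasing exactly when the associated colored permutation has no ascent, i.e.\ lies in $\ninc_\mu$. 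This is where the specific choice of edge labels from Section~\ref{section:ellabeling} is essential, and I expect it to be the main obstacle of the proof, because the single condition ``strictly decreasing label sequence'' must be matched term by term with the two-part condition (namely, that at each consecutive pair there is either a descent of the underlying uncolored permutation or a strict drop of color) that characterizes the absence of an ascent.

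Finally, having matched the falling maximal chains of $[\hat 0,[n]^\mu]$ with $\ninc_\mu$, I would carry the Bj\"orner--Wachs basis across the explicit isomorphism of Theorem~\ref{theorem:explicitisomorphism}: that isomorphism identifies $\wedge(\sigma)$, for $\sigma\in\sym_\mu$, with (up to a nonzero scalar) the cohomology class carried by the maximal chain of $[\hat 0,[n]^\mu]$ associated to $\sigma$, so the image of $\{\wedge(\sigma)\mid\sigma\in\ninc_\mu\}$ is precisely the chosen cohomology basis. Hence $\{\wedge(\sigma)\mid\sigma\in\ninc_\mu\}$ is a basis of $\L(\mu)$, and $\dim\L(\mu)=|\ninc_\mu|$ follows at once. (Alternatively, once the dimension equality $\dim\L(\mu)=|\ninc_\mu|$ has been obtained from Theorems~\ref{theorem:isomorphism} and~\ref{theorem:ellabeling} as above, one can conclude without the explicit form of the isomorphism by using relations~(\ref{relation:colantisym1}) and~(\ref{relation:colantisym2}) in a straightening argument to show that the wedged nonincreasing colored permutations span $\L(\mu)$; a spanning family whose cardinality equals the dimension is automatically a basis.)
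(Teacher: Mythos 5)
Your overall plan is exactly the one the paper follows (Section~\ref{section:homotopytype}: EL-labeling, Bj\"orner--Wachs to get a cohomology basis from special maximal chains, bijection of maximal chains of $[\hat 0,[n]^\mu]$ with $\sym_\mu$, then transport through the explicit isomorphism). However, there is a genuine error in the way you invoke the Bj\"orner--Wachs theorem, and it is not merely cosmetic: you characterize the distinguished maximal chains as those whose label word is \emph{strictly decreasing}. That is the correct characterization only when the edge labels lie in a \emph{totally} ordered set. Here the labels take values in the product poset $[n+1]\times\PP$, which is not a total order, and the correct condition (Theorem~\ref{theorem:elth}, and the paper's Theorem~\ref{thm:ascfreeEL}) is that the chain be \emph{ascent-free}: $\bar\lambda(x_i,x_{i+1})\not<\bar\lambda(x_{i+1},x_{i+2})$ for each $i$, which allows consecutive labels to be \emph{incomparable}. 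This is a strictly weaker requirement than strict decrease, and the discrepancy matters here. For instance, for $n=3$ and $\mu=(1,1,1)$, the colored permutation $3^1\,1^2\,2^1$ is nonincreasing (it has no ascent: $3>1$, and at the second step $2>1$ even though the underlying letters go $1<2$), but the associated label word $3^1,\,1^2,\,2^1$ is not strictly decreasing, since $3^1$ and $1^2$ are incomparable in $[n+1]\times\PP$. So the set of chains with strictly decreasing label word is a proper subset of the ascent-free chains, and your stated equivalence ``label sequence strictly decreasing $\iff$ $\sigma\in\ninc_\mu$'' is false as written.

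You do, in the surrounding prose, identify the right combinatorial condition (``at each consecutive pair there is either a descent of the underlying uncolored permutation or a strict drop of color''): that is precisely the negation of ``$x<y$ and $i\le j$,'' i.e.\ precisely ascent-freeness, and precisely the definition of $\ninc_\mu$. So the fix is simply to replace ``strictly decreasing'' by ``ascent-free'' throughout and to match ascent-free label words against $\ninc_\mu$ as the paper does in Theorem~\ref{thm:ascfreeEL}. With that correction, and after verifying that the restriction of $\bar\lambda$ to $[\hat 0,[n]^\mu]$ is the labeling of equation~\eqref{definition:labeling}, your argument goes through and coincides with the paper's proof (Theorem~\ref{thm:ascfreeEL} followed by Corollary~\ref{corollary:dimensionlmu}). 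Your alternate closing argument (get the dimension from EL-shellability, then show by straightening with relations~\eqref{relation:colantisym1} and~\eqref{relation:colantisym2} that the nonincreasing wedges span) is also a valid route to the basis statement, but it is not the one the paper takes.
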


Consider now the generating function
\begin{align}\label{definition:euleriansymmetric}
 \sum_{\mu \in \wcomp_{n}}\dim \L(\mu)\,\xx^{\mu},
\end{align}
where $\xx^{\mu}=x_1^{\mu(1)}x_2^{\mu(2)}\cdots$. Since $\dim \L(\mu)$ is invariant under any 
rearrangement of the parts of $\mu$ we have that (\ref{definition:euleriansymmetric}) 
belongs to the ring of symmetric functions $\Lambda_{\ZZ}$ (see  \cite{Macdonald1995} and 
\cite[Chapter 7]{Stanley1999} for the definitions). 

The symmetric function (\ref{definition:euleriansymmetric}) is also $e$-nonnegative; i.e., the 
coefficients of its 
expansion  in the basis of elementary symmetric functions are all 
nonnegative.
Indeed, we associate a \emph{type} (or integer partition) to  each $\sigma \in \sym_n$ in the 
following way: Let $\pi(\sigma)$ be the finest (set) partition of the set $[n]$ satisfying
\begin{itemize}
\item whenever $\sigma(i) < \sigma(i+1)$ for some $i \in [n-1]$, $\sigma(i)$ and $\sigma(i+1)$
belong to the same block of $\pi(\sigma)$.
\end{itemize}
We define the \emph{type} $\lambda(\sigma)$ of $\sigma$ to be the (integer) 
partition whose parts are the sizes of the blocks of $\pi(\sigma)$. For example, for the 
permutation $5126473$ the associated partition is $\lambda(\sigma)=(3,2,1,1)$. We obtain the 
following description of \eqref{definition:euleriansymmetric}.

\begin{theorem} \label{theorem:dimensionstype}
For all $n$,
\begin{align*}
  \sum_{\mu \in \wcomp_{n}}\dim \L(\mu)\,\xx^{\mu}= \sum_{\sigma \in \sym_n} 
e_{\lambda(\sigma)}(\xx),
 \end{align*} 
 where $e_{\lambda}$ is the elementary symmetric function associated with the partition 
$\lambda$.
 \end{theorem}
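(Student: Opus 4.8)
The plan is to combine Theorem~\ref{theorem:dimension}, which gives
$\dim\L(\mu)=|\ninc_\mu|$, with a careful bookkeeping of nonincreasing colored
permutations by content. Summing over all $\mu\in\wcomp_n$ we get
$\sum_{\mu}\dim\L(\mu)\,\xx^\mu=\sum_{\sigma\in\ninc_n}\xx^{\mu(\sigma)}$, so
the task is purely combinatorial: to show that the generating function for
nonincreasing colored permutations of $[n]$, weighted by content monomial, equals
$\sum_{\sigma\in\sym_n}e_{\lambda(\sigma)}(\xx)$. First I would unpack the
definition of a nonincreasing colored permutation: it is an uncolored permutation
$\tilde\sigma\in\sym_n$ together with a color function $\clr$, subject to the
constraint that at every position $i$ with $\tilde\sigma(i)<\tilde\sigma(i+1)$
(an ascent of $\tilde\sigma$) we must have $\clr(\sigma(i))>\clr(\sigma(i+1))$
(so that $i$ fails to be an ascent of the colored permutation). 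At positions
where $\tilde\sigma$ already has a descent there is no constraint at all on the
colors.

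Next I would organize the sum over $\ninc_n$ by first fixing the underlying
uncolored permutation $\tilde\sigma$ and then summing over all admissible color
functions. For a fixed $\tilde\sigma$, consider the partition $\pi(\tilde\sigma)$
of $[n]$ into maximal runs of consecutive positions on which $\tilde\sigma$ is
ascending; equivalently, break $[n]$ at the descent positions of $\tilde\sigma$.
The ascent constraints say exactly that within each such run the colors of the
letters, read left to right, must be strictly decreasing, while across runs
(i.e.\ at descent positions) the colors are unconstrained. Thus choosing an
admissible coloring of $\tilde\sigma$ amounts to independently choosing, for each
run $R$ of length $\ell$, a strictly decreasing sequence of $\ell$ positive
integers, i.e.\ an $\ell$-element subset of $\PP$. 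The generating function for
the content monomial contributed by a single run of length $\ell$ is therefore
$\sum_{i_1<\dots<i_\ell}x_{i_1}\cdots x_{i_\ell}=e_\ell(\xx)$, and since the runs
contribute independently, the coloring sum for fixed $\tilde\sigma$ is
$\prod_{\text{runs }R}e_{|R|}(\xx)=e_{\lambda(\tilde\sigma)}(\xx)$, where
$\lambda(\tilde\sigma)$ is the partition of run-lengths.

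Finally I would reconcile this run-length partition with the partition
$\lambda(\sigma)$ defined in the excerpt via the finest set partition
$\pi(\sigma)$ in which $\sigma(i)$ and $\sigma(i+1)$ lie in the same block
whenever $\sigma(i)<\sigma(i+1)$. For an uncolored permutation these blocks are
precisely the maximal ascending runs, so $\lambda(\tilde\sigma)$ in my notation
coincides with $\lambda$ applied to the permutation written as a word, i.e.\ with
the type in the statement; here I need to be slightly careful that the
``finest partition'' description and the ``maximal ascending run'' description
really agree, which they do because the relation forcing $\sigma(i),\sigma(i+1)$
together is only imposed at ascents, and consecutive ascents chain together into
a single run. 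Summing over $\tilde\sigma\in\sym_n$ then yields
$\sum_{\mu\in\wcomp_n}\dim\L(\mu)\,\xx^\mu=\sum_{\tilde\sigma\in\sym_n}
e_{\lambda(\tilde\sigma)}(\xx)$, which is the claim. The only real subtlety, and
the step I would treat most carefully, is the independence of the color choices
across runs together with the strict-decrease-within-runs condition: I must make
sure the ascent definition gives a strict inequality $\clr(\sigma(i))>
\clr(\sigma(i+1))$ (not $\ge$) at forced descent positions inside a run, so that
the per-run generating function is exactly $e_\ell$ rather than the complete
homogeneous $h_\ell$; this is where the asymmetry between the ``$<$'' on values
and the ``$\le$'' on colors in the definition of ascent is doing the work.
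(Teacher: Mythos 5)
Your proposal is correct and follows essentially the same route as the paper's proof: start from $\dim\L(\mu)=|\ninc_\mu|$ (Corollary~\ref{corollary:dimensionlmu}), group nonincreasing colored permutations by their underlying uncolored permutation $\tau$, observe that the admissible colorings decompose independently over the blocks of $\pi(\tau)$ (the maximal ascending runs) with strictly decreasing colors inside each block contributing $e_{|B|}(\xx)$, and sum over $\tau\in\sym_n$. Your explicit remark that the strictness $\clr(\sigma(i))>\clr(\sigma(i+1))$ (negating the $\le$ in the ascent definition) is what produces $e_\ell$ rather than $h_\ell$ is exactly the crucial point and matches the paper.
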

 
 The following theorem gives another characterization of \eqref{definition:euleriansymmetric}.

\begin{theorem}\label{theorem:multiplicativeinverse}
We have
  \begin{align*}
   \sum_{n\ge0}\sum_{\mu \in \wcomp_{n}}\dim 
\L(\mu)\,\xx^{\mu}\frac{y^n}{n!} =\left [ \sum_{n\ge0}(-1)^{n} 
h_{n}(\xx)\frac{y^n}{n!} \right ] ^{-1},
  \end{align*}
  where $h_n$  is the complete homogeneous symmetric function and $(\cdot)^{-1}$ denotes the 
multiplicative inverse of a formal power series.
 \end{theorem}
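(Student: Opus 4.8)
\textit{Proof sketch.} The plan is to feed the formula of Theorem~\ref{theorem:dimensionstype} into an exponential generating function and recognize the result as a multiplicative inverse; the only genuine inputs are the classical enumeration of permutations by descent set and the classical symmetric-function identity $\bigl(\sum_{m\ge0}e_m(\xx)t^m\bigr)\bigl(\sum_{m\ge0}(-1)^m h_m(\xx)t^m\bigr)=1$.

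First I would rewrite the right-hand side of Theorem~\ref{theorem:dimensionstype} by sorting permutations according to their maximal increasing runs. If $\sigma\in\sym_n$ has descent set $D(\sigma)=\{d_1<\dots<d_{k-1}\}$, then the blocks of $\pi(\sigma)$ are exactly the intervals cut out by the $d_i$'s, so $\lambda(\sigma)$ is the partition underlying the composition $\comp(\sigma):=(d_1,\,d_2-d_1,\,\dots,\,n-d_{k-1})\models n$. Writing $e_\alpha:=\prod_i e_{\alpha_i}(\xx)$ for a composition $\alpha$, we get $e_{\lambda(\sigma)}=e_{\comp(\sigma)}$ and hence
$$\sum_{\sigma\in\sym_n}e_{\lambda(\sigma)}(\xx)=\sum_{\alpha\models n}b(\alpha)\,e_\alpha,$$
where $b(\alpha)$ counts the permutations of $[n]$ whose descent set is exactly $S(\alpha):=\{\alpha_1,\,\alpha_1+\alpha_2,\,\dots\}$.

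Next I would invoke the classical fact that the number of permutations of $[n]$ with descent set contained in $S(\beta)$ is the multinomial coefficient $\binom{n}{\beta_1,\dots,\beta_{\ell(\beta)}}$. Möbius inversion over the Boolean lattice of subsets of $[n-1]$ then gives $b(\alpha)=\sum_{\beta}(-1)^{\ell(\alpha)-\ell(\beta)}\binom{n}{\beta_1,\dots,\beta_{\ell(\beta)}}$, the sum being over all compositions $\beta$ coarsening $\alpha$ (i.e. obtained from $\alpha$ by summing consecutive parts). Substituting and interchanging the order of summation — a coarsening $\beta=(\beta_1,\dots,\beta_{\ell(\beta)})$ is refined by $\alpha$ precisely by choosing, independently, a composition of each part $\beta_i$ — the inner sum factors:
$$\sum_{\alpha\models n}b(\alpha)\,e_\alpha=\sum_{\beta\models n}(-1)^{\ell(\beta)}\binom{n}{\beta_1,\dots,\beta_{\ell(\beta)}}\prod_{i=1}^{\ell(\beta)}c_{\beta_i},\qquad c_m:=\sum_{\delta\models m}(-1)^{\ell(\delta)}e_\delta.$$
A one-line computation with the ordinary generating function $\sum_{m\ge0}c_m t^m=\bigl(1+\sum_{m\ge1}e_m(\xx)t^m\bigr)^{-1}$ and the identity above identifies $c_m=(-1)^m h_m(\xx)$.

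Finally I would assemble the exponential generating function. Using $\binom{n}{\beta_1,\dots,\beta_{\ell(\beta)}}\frac{y^n}{n!}=\prod_i\frac{y^{\beta_i}}{\beta_i!}$ together with Theorem~\ref{theorem:dimensionstype},
$$\sum_{n\ge0}\sum_{\mu\in\wcomp_n}\dim\L(\mu)\,\xx^\mu\frac{y^n}{n!}=\sum_{k\ge0}\Bigl(\sum_{m\ge1}\frac{-c_m\,y^m}{m!}\Bigr)^{k}=\Bigl(\sum_{m\ge0}\frac{c_m\,y^m}{m!}\Bigr)^{-1}=\Bigl(\sum_{m\ge0}(-1)^m h_m(\xx)\frac{y^m}{m!}\Bigr)^{-1},$$
which is the claim. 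The only point requiring care is bookkeeping the lengths and signs through the Möbius inversion and the interchange of summations; everything else is routine formal-power-series manipulation. (Alternatively one could bypass Theorem~\ref{theorem:dimensionstype} and argue directly from Theorem~\ref{theorem:dimension}: summing $\xx^{\mu(\sigma)}$ over the nonincreasing colored permutations $\sigma$ of $[n]$ and grouping by the underlying descent composition produces the same sum $\sum_\alpha b(\alpha)e_\alpha$, since the colors along an increasing run of length $m$ must strictly decrease and so contribute a factor $e_m(\xx)$.)
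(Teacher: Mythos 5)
Your proof is correct, but it takes a genuinely different route from the paper. The paper derives Theorem~\ref{theorem:multiplicativeinverse} as a specialization of the equivariant identity in Theorem~\ref{theorem:representationmultiplicativeinverse}: it applies Sundaram's Whitney-cohomology Lemma~\ref{lemma:sundaramsum} to the Cohen-Macaulay intervals $[\hat 0,[n]^\mu]$ (using the EL-shellability from Theorem~\ref{theorem:ellabelingposet} and the product decomposition of Lemma~\ref{lemma:isomodules}), obtains a recursion on Frobenius characteristics, and then applies the specialization $E_1$ sending $p_k(\yy)\mapsto y\,\delta_{k,1}$. You instead start from the non-equivariant $e$-expansion in Theorem~\ref{theorem:dimensionstype}, regroup $\sum_\sigma e_{\lambda(\sigma)}$ by descent composition, invert over the Boolean lattice of descent sets to bring in multinomial coefficients, factor via the classical identity $\bigl(\sum_m e_m t^m\bigr)\bigl(\sum_m(-1)^m h_m t^m\bigr)=1$ (which is Proposition~\ref{proposition:htoe} in the paper), and then read off the exponential generating function as a geometric series. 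Both arguments ultimately rest on the EL-shellability and the resulting basis of nonincreasing colored permutations, but your path from there to the inverse formula stays entirely inside classical permutation enumeration and symmetric-function algebra and is considerably more elementary than the Whitney-cohomology machinery; what it does not give you, of course, is the stronger $\sym_n$-equivariant statement of Theorem~\ref{theorem:representationmultiplicativeinverse}, which is the paper's real target and from which Theorem~\ref{theorem:multiplicativeinverse} drops out for free. Your observation at the end that one can bypass Theorem~\ref{theorem:dimensionstype} and work directly from the basis of Theorem~\ref{theorem:dimension} is also correct, but note that it essentially reproduces the paper's own proof of Theorem~\ref{theorem:dimensionstype} rather than shortening the argument. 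Finally, the paper itself anticipates in the paragraph preceding the statement that a direct proof via the M\"obius invariant of $\B_n^w$ and Philip Hall's theorem is possible; your descent-set M\"obius inversion is a concrete and clean realization of a direct argument in that spirit, though carried out at the level of permutation statistics rather than poset M\"obius functions.
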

 
Even though Theorem \ref{theorem:multiplicativeinverse} can be proven directly (using for example 
the recursive definition of the M\"obius invariant of the maximal intervals of $\B_n^w$, Philip 
Hall's theorem and the isomorphism of Theorem \ref{theorem:isomorphism}),
in Section \ref{section:frobeniuscharacteristic} we prove, using 
a technique of Sundaram \cite{Sundaram1994} to compute group representations on Cohen-Macaulay 
$G$-posets, an equivariant version that reduces to Theorem \ref{theorem:multiplicativeinverse} by 
specialization. Let  $\ch V$ 
denote the Frobenius characteristic in variables $\yy=(y_1,y_2,\dots)$ of
an $\sym_n$-module $V$.
Recall that the map  $\ch : \mathnormal{Rep}_{\sym} \rightarrow \Lambda_{\ZZ}$ is an isomporphism 
between the Grothendieck ring 
 of representations of symmetric groups $\mathnormal{Rep}_{\sym}$ and the ring of symmetric 
functions $\Lambda_{\ZZ}$ where the Schur function $s_{\lambda}$ is the image under $\ch$ of the 
Specht module (irreducible $\sym_n$-module) $S^{\lambda}$ for every partition $\lambda$.

\begin{theorem}\label{theorem:representationmultiplicativeinverse}
 We have that 
\begin{align*}
 \sum_{n\ge 0} \sum_{\mu \in \wcomp_{n}}\ch \L(\mu)\,\xx^{\mu}=\Bigl 
(\sum_{n\ge
0}(-1)^n h_{n}(\xx)h_{n}(\yy)
\Bigr)^{-1}.
\end{align*}

\end{theorem}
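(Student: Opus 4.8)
The plan is to compute each $\ch\L(\mu)$ by applying Sundaram's Whitney homology recursion \cite{Sundaram1994} to the maximal interval $P:=[\hat 0,[n]^\mu]$ of $\B_n^w$ (where $n=|\mu|$), and then to package the resulting family of recursions into the stated identity. Here $P$ is a bounded, pure, Cohen--Macaulay $\sym_n$--poset: it is an interval of the Cohen--Macaulay poset $\widehat{\B_n^w}$ of Theorem~\ref{theorem:ellabeling}, and $\sym_n$ acts on it fixing $\hat 0$ and $[n]^\mu$. For each fixed $\mu$ the poset $P$ is finite --- only the finitely many $\nu\le\mu$ occur in it --- so the inverse--limit construction plays no role, and the final identity will be read off directly in the completed ring $\Lambda_{\ZZ}\otimes\Lambda_{\ZZ}$ of symmetric functions in the two variable sets $\xx$ and $\yy$.

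First I would identify the Whitney homology modules $WH_i(P)=\bigoplus_{x\in P,\ rk(x)=i}\widetilde H_{i-2}((\hat 0,x))$. For a weighted subset $A^\nu$ of $[n]$ with $|A|=|\nu|=i$, the closed interval $[\hat 0,A^\nu]$ of $\B_n^w$ is isomorphic, as a $\sym_A$--poset under any bijection $A\cong[i]$, to the maximal interval $[\hat 0,[i]^\nu]$ of $\B_i^w$; hence by Theorem~\ref{theorem:isomorphism} (the distinction between reduced homology and cohomology being immaterial for $\sym_i$--representations) $\widetilde H_{i-2}((\hat 0,A^\nu))\cong_{\sym_A}\L(\nu)$, while $\sym_{[n]\setminus A}$ acts trivially on this module since it fixes the open interval $(\hat 0,A^\nu)$ pointwise. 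As $\sym_n$ permutes the rank--$i$ elements $A^\nu$ of $P$ transitively for each fixed $\nu\le\mu$, with stabilizer $\sym_i\times\sym_{n-i}$, one gets
\[
WH_i(P)\;\cong_{\sym_n}\;\bigoplus_{\substack{\nu\le\mu\\ |\nu|=i}}\operatorname{Ind}_{\sym_i\times\sym_{n-i}}^{\sym_n}\bigl(\L(\nu)\boxtimes\mathbf{1}_{n-i}\bigr),
\]
so $\ch WH_i(P)=\sum_{\nu\le\mu,\ |\nu|=i}\ch\L(\nu)\,h_{n-i}(\yy)$; the cases $i=0$ and $i=1$ (the trivial module and the permutation module on the atoms) fit this formula, with $\nu=\mathbf{0}$ and $\nu=\mathbf{e}_r$ respectively.

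Now Sundaram's technique applies: since $P$ is Cohen--Macaulay with $\hat 0\ne\hat 1$, the alternating sum of its Whitney homology modules vanishes in the representation ring of $\sym_n$, whence
\[
\sum_{\nu\le\mu}(-1)^{|\nu|}\,\ch\L(\nu)\,h_{|\mu|-|\nu|}(\yy)=0\qquad\text{for }|\mu|\ge 1,
\]
while for $\mu=\mathbf{0}$ the same sum equals $1$. To conclude, I would multiply the $\mu$--th identity by $(-1)^{|\mu|}\xx^\mu$, sum over all $\mu\in\wcomp$, write each pair $\nu\le\mu$ uniquely as $\mu=\nu+\rho$ with $\rho\in\wcomp$, and use $\sum_{\rho\in\wcomp_m}\xx^\rho=h_m(\xx)$; the double sum then factors and yields
\[
\Bigl(\sum_{n\ge 0}\sum_{\mu\in\wcomp_n}\ch\L(\mu)\,\xx^\mu\Bigr)\Bigl(\sum_{m\ge 0}(-1)^m h_m(\xx)h_m(\yy)\Bigr)=1,
\]
which is equivalent to the asserted formula. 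The one genuinely delicate point is the equivariant part of Sundaram's machinery: one must verify that the Whitney homology vanishing holds as an isomorphism $\bigoplus_{i\ \mathrm{even}}WH_i(P)\cong_{\sym_n}\bigoplus_{i\ \mathrm{odd}}WH_i(P)$ of $\sym_n$--modules, and not merely as the underlying numerical M\"obius relation, and that the Frobenius bookkeeping $\ch\operatorname{Ind}_{\sym_i\times\sym_{n-i}}^{\sym_n}(\L(\nu)\boxtimes\mathbf{1}_{n-i})=\ch\L(\nu)\,h_{n-i}(\yy)$ is applied correctly; the remaining steps --- the reindexing $\mu=\nu+\rho$ and the evaluation $\sum_{\rho\in\wcomp_m}\xx^\rho=h_m(\xx)$ --- are routine.
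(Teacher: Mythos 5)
Your proof is correct and follows essentially the same route as the paper: Sundaram's Whitney homology lemma applied to the maximal intervals of $\B_n^w$, translated via the Frobenius characteristic into a power-series identity, and factored after the reindexing $\mu=\nu+\rho$. The only cosmetic difference is that the paper applies the lemma to the dual poset $[\hat 0,[n]^\mu]^{*}$ rather than to $[\hat 0,[n]^\mu]$ itself, which produces the same recursion after the change of variable $\eta=\mu-\nu$.
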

Note that the power series in Theorem \ref{theorem:representationmultiplicativeinverse} belongs to  
the ring 
of symmetric power series in $\yy$ with coefficients in the ring $\Lambda_{\QQ}$ of symmetric 
functions in $\xx$.

We use a theorem discovered by Fr\"oberg \cite{Froberg1975}; Carlitz, Scoville and Vaughan 
\cite{CarlitzScovilleVaughan1976}; and Gessel 
\cite{Gessel1977} that provides an explicit description of the multiplicative inverse in Theorem 
\ref{theorem:representationmultiplicativeinverse} to give the following explicit formula for the 
representation of $\sym_n$ on $\L(\mu)$.

\begin{theorem}\label{theorem:representation}We have that
 $$\sum_{\mu \in \wcomp_n}\ch \L(\mu)\xx^{\mu}=\sum_{\eta \vdash n} \sum_{\alpha \in 
\comp_n}c_{H(\alpha),\eta}e_{\lambda(\alpha)}(\xx)s_{\eta}(\yy)$$
where $c_{H(\alpha),\eta}$ is the number of Young tableaux of shape $\eta$ and descent set 
$\des H(\alpha)$ (defined in Section \ref{section:frobeniuscharacteristic}) and $\comp_n$ is the set 
of integer compositions of $n$.
\end{theorem}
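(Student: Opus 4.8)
The plan is to combine Theorem~\ref{theorem:representationmultiplicativeinverse} with the Fr\"oberg--Carlitz-Scoville-Vaughan--Gessel (FCSVG) theorem, which computes the multiplicative inverse of $\sum_{n\ge 0}(-1)^n h_n(\xx)h_n(\yy)$ in closed form. Concretely, that theorem expresses the reciprocal of an alternating sum of products $h_n h_n$ (more generally $\sum (-1)^n u_n v_n$ for two sequences of ``power sum-like'' generators) as a sum over compositions, where each composition $\alpha=(\alpha_1,\dots,\alpha_\ell)\models n$ contributes a product of the form $\prod_i$ (something indexed by $\alpha_i$), with the sum effectively running over all ways to break $[n]$ into consecutive blocks. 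The first step is to state the FCSVG identity in the precise two-alphabet form we need: the inverse equals $\sum_{\alpha \in \comp_n}(\text{a monomial in the }h\text{'s of }\xx)\cdot(\text{a monomial in the }h\text{'s of }\yy)$, or after suitable manipulation, $\sum_\alpha (\pm)\, h_\alpha(\xx)\, h_\alpha(\yy)$ type terms. I would cite the relevant formula from \cite{Froberg1975,CarlitzScovilleVaughan1976,Gessel1977} and specialize it to $u_n = v_n = h_n$.

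Next I would perform the two expansions separately. On the $\xx$ side, each term $h_\alpha(\xx) = h_{\alpha_1}(\xx)\cdots h_{\alpha_\ell}(\xx)$ (or the appropriate variant coming out of FCSVG) must be re-expanded in the elementary basis; here is where the combinatorial gadget $H(\alpha)$ and the statistic $\lambda(\alpha)$ enter. I expect that the FCSVG formula, when the $\xx$-part is rewritten, produces exactly $e_{\lambda(\alpha)}(\xx)$ for each composition $\alpha$ — this should match the $e$-positivity already foreshadowed in Theorem~\ref{theorem:dimensionstype}, and indeed specializing $\yy$ to recover $\sum_{\sigma\in\sym_n}e_{\lambda(\sigma)}(\xx)$ gives a consistency check, since $\sum_{\alpha\models n}(\text{number of }\sigma\text{ with a given refinement structure})$ reorganizes the sum over permutations by their ascent-composition. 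On the $\yy$ side, the factor attached to $\alpha$ is (up to sign) a ribbon/Jacobi-Trudi type alternating sum of $h$'s in $\yy$, and by the Jacobi--Trudi identity this equals the ribbon Schur function $s_{H(\alpha)}(\yy)$; expanding a ribbon Schur function in the Schur basis gives $s_{H(\alpha)} = \sum_{\eta\vdash n} c_{H(\alpha),\eta}\, s_\eta$ where $c_{H(\alpha),\eta}$ counts standard Young tableaux of shape $\eta$ whose descent set equals $\des H(\alpha)$ (this is the classical description of ribbon-to-Schur coefficients). Assembling the two expansions and collecting terms yields the stated double sum.

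The main obstacle I anticipate is bookkeeping the signs and the precise indexing so that the two halves of the FCSVG output line up into the clean product $c_{H(\alpha),\eta}\,e_{\lambda(\alpha)}(\xx)\,s_\eta(\yy)$ with no residual sign. In particular, the FCSVG inverse naturally comes with $(-1)$'s attached to block sizes; these must be absorbed on the $\yy$-side into the alternating Jacobi--Trudi determinant for the ribbon (where the signs are ``supposed'' to be) while leaving the $\xx$-side sign-free so that the $e_{\lambda(\alpha)}$ coefficients are genuinely $+1$. I would handle this by first proving a single-alphabet lemma: $\bigl(\sum_{n\ge 0}(-1)^n h_n(\xx) t^n\bigr)^{-1} = \sum_{n\ge 0} t^n \sum_{\alpha\models n} e_{\lambda(\alpha)}(\xx)$ — essentially the generating-function identity behind Theorem~\ref{theorem:dimensionstype} with a bookkeeping variable $t$ — and then feed the two-variable FCSVG formula through it, so that the $\xx$-dependence is manifestly $e$-positive term by term and the remaining $\yy$-dependent coefficient of $e_{\lambda(\alpha)}(\xx)$ is identified via Jacobi--Trudi as the ribbon Schur function $s_{H(\alpha)}(\yy)$. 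A secondary, more routine point is to confirm that the map $\alpha \mapsto H(\alpha)$ and the descent-set convention used to define $c_{H(\alpha),\eta}$ match the standard ribbon conventions; I would pin this down in Section~\ref{section:frobeniuscharacteristic} where $H(\alpha)$ is introduced, then the final assembly is immediate.
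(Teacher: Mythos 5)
Your high-level plan is structurally the same as the paper's: apply Theorem~\ref{theorem:representationmultiplicativeinverse} to express the left side as a reciprocal, find an explicit formula for that reciprocal, and then expand the resulting ribbon Schur functions into the Schur basis via Proposition~\ref{proposition:skewhook} (which you correctly identify, including the descent-set description of $c_{H(\alpha),\eta}$). Where you diverge is the middle step. The paper proves the explicit inverse formula (Theorem~\ref{theorem:explicitmultiplicativeinverse}) by applying Gessel's word-inversion theorem (Theorem~\ref{theorem:gessel}) to the biletter alphabet $\PP\times\PP$ with the product order, factoring words in $\W$ directly into runs; you instead propose a geometric-series expansion of the reciprocal followed by Jacobi--Trudi rewriting. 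That alternative route exists, but as written it contains two concrete errors.

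First, the ``single-alphabet lemma'' is false: by the standard identity $E(t)H(-t)=1$ one has $\bigl(\sum_{n\ge 0}(-1)^n h_n(\xx)t^n\bigr)^{-1}=\sum_{n\ge 0}e_n(\xx)t^n$, not $\sum_n t^n\sum_{\alpha\models n}e_{\lambda(\alpha)}(\xx)$; already at $n=2$ the left side gives $e_2$ while your right side gives $e_2+e_1^2$. The true ``single-alphabet'' statement is Theorem~\ref{theorem:multiplicativeinverse}, which uses the \emph{exponential} weight $y^n/n!$ and sums $e_{\lambda(\sigma)}$ over $\sigma\in\sym_n$; passing from permutations to compositions introduces a multiplicity $f^{H(\alpha)}$ that your version drops. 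Second, the assertion that re-expanding $h_\alpha(\xx)$ in the elementary basis ``produces exactly $e_{\lambda(\alpha)}(\xx)$'' is also wrong, since $h_\alpha$ is a signed sum of many $e_\beta$'s. The repair is a single double-sum swap rather than two independent expansions: start from $\sum_\alpha e_{\lambda(\alpha)}(\xx)\,s_{H(\alpha)}(\yy)$, write $s_{H(\alpha)}(\yy)=\sum_{\beta}(-1)^{\ell(\alpha)-\ell(\beta)}h_\beta(\yy)$ with $\beta$ ranging over coarsenings of $\alpha$, interchange the sums, and observe that for fixed $\beta$ the inner sum over refinements $\alpha$ of $\beta$ factors over the parts $\beta_i$; Proposition~\ref{proposition:htoe} collapses each factor to $(-1)^{\beta_i-1}h_{\beta_i}(\xx)$, yielding $\sum_\beta(-1)^{|\beta|-\ell(\beta)}h_\beta(\xx)h_\beta(\yy)$, which is exactly the geometric-series expansion of $\bigl(\sum_n(-1)^n h_n(\xx)h_n(\yy)\bigr)^{-1}$. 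That swap is the step your outline is missing; without it the two halves cannot be made to share the common index $\alpha$ appearing in Theorem~\ref{theorem:representation}.
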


Theorem \ref{theorem:representation} can be seen as an equivariant version of the nonnegativity in 
the $e$ basis in Theorem 
\ref{theorem:dimensionstype}. Indeed, if we write 
$$\sum_{\mu \in \wcomp_n}\ch \L(\mu)\xx^{\mu}=\sum_{\lambda \vdash n} 
C_{\lambda}(\yy)e_{\lambda}(\xx),$$
then Theorem \ref{theorem:representation} implies that the coefficients $C_{\lambda}(\yy)$ are 
Schur-nonnegative.

In Section \ref{section:Koszul} we describe (Theorem \ref{theorem:koszulity}) how 
the technique developed by M\'endez in \cite{Mendez2010} implies that for a finite $S$ and finite 
dimensional $V$ the Koszul dual 
algebras $\S_S(V)$ and $\L_S(V)$ are Koszul in the sense of Priddy \cite{Priddy1970}. 

Finally, in Section \ref{section:specializations} we show how certain specializations of the
identity in Theorem \ref{theorem:representationmultiplicativeinverse}, and the identity obtained 
from this together with Theorem \ref{theorem:representation}, reduce to a classical formula 
for the exponential generating function of the Eulerian polynomials and also to an identity that 
involves the characters of the regular representations of $\sym_n$ for all $n$. The latter one 
revealing that the representation $ \L_S(n)$ is an extension of the regular representation 
$\CC[\sym_n]$.

\section{The isomorphism $\L(\mu) \simeq_{\sym_n}  \widetilde H^{n-2}((\hat 0, 
[n]^{\mu}))$}\label{section:isomorphism}

\subsection{A generating set for $\widetilde H^{n-2}((\hat 0, [n]^{\mu}))$}\label{section:genhom}

The top dimensional cohomology of a pure poset $P$, say of length $\ell$, has a particularly simple
description.  Let $\MM(P)$ denote the set of maximal chains of $P$ and let $\MM^\prime(P)$ denote
the set of chains of length $\ell-1$.  We view the coboundary map $\delta$ as a map from the chain
space of $P$ to itself, which takes chains of length $r$ to chains of length $r+1$ for all $r$.
It can be shown (see for example the appendix in \cite{DleonWachs2013a}) that $\delta$ acts on 
$r$-chains as follows:
\begin{align*}
\delta_r(\alpha_0<\cdots<\alpha_r)=&
\\\sum_{i=0}^{r+1}(-1)^{i}&\sum_{\alpha \in
(\alpha_{i-1},\alpha_i)}(\alpha_0<\cdots <\alpha_{i-1}<\alpha<\alpha_i<\cdots<\alpha_r),
\end{align*}
where  $\alpha_{-1}=\hat{0}$ and $\alpha_{r+1}=\hat{1}$.
Since the image of $\delta$ on the top chain space (i.e. the space spanned by $\MM(P)$) is $0$, the
kernel is the entire top chain space. Hence top cohomology is the quotient of the space spanned by
$\MM(P)$ by the image of the space spanned by $\MM^\prime(P)$.  The image of $\MM^\prime(P)$ is 
what we call the coboundary relations.
We thus have the following presentation of the top cohomology  $$\widetilde H^{\ell}(P) = \langle 
\MM(P)
| \mbox{ coboundary relations} \rangle.$$
Note that for any $\alpha=\alpha_0<\cdots<\alpha_{\ell-1} 
\in \MM^\prime(P)$ there is exactly one step $\alpha_{i-1}<\alpha_{i}$ for some 
$i=0,\dots,\ell$ where the chain $\alpha$ can be \emph{refined} (or augmented) to get a chain in 
$\MM(P)$. Then the cohomology relations in the top cohomology are generated by relations of the form
\begin{align}\label{equation:cohomologyrelations}
\sum_{\alpha \in
(\alpha_{i-1},\alpha_i)}(\alpha_0<\cdots <\alpha_{i-1}<\alpha<\alpha_i<\cdots<\alpha_d)=0.
\end{align}
\subsubsection{Description of the maximal chains}
We show that the maximal chains in a maximal interval $[\hat{0},[n]^{\mu}]$ of $\B_n^w$ 
are in bijection with colored permutations, hence we can use permutations in $\sym_{\mu}$ to 
describe the elements of  $\MM([\hat{0},[n]^{\mu}])$. A map $\bar \lambda: 
\mathcal E(P) \to \Lambda$,  where $\mathcal E(P)$ is the set of edges (covering relations) of the 
Hasse diagram of a poset $P$ and $\Lambda$ is a fixed poset is called an \emph{edge labeling}.
Note that a covering relation in $\B_n^w$ is of the form $A^{\nu}\lessdot (A\cup 
\{x\})^{\nu+\mathbf{e_i}}$ where $A^{\nu}$ is a weighted subset of $[n]$, $x\in [n]\setminus A$ and 
$i 
\in \PP$. So we can associate a labeling $\bar \lambda:\E(\B_n^w)\rightarrow [n]\times\PP$ given by
\begin{align}\label{definition:labeling}
\bar\lambda(A^{\nu},(A\cup \{x\})^{\nu+\mathbf{e_i}})=x^i, 
\end{align}
where $[n]\times\PP$ is the product poset of the totally ordered sets $[n]$ and $\PP$.
In Section~\ref{section:homotopytype} we conclude that this labeling $\bar\lambda$ is actually an 
EL-labeling of 
$\B_n^w$. Furthermore, this labeling can be extended to an EL-labeling of  
$\widehat{\B_n^w}:=\B_n^w \cup \{\hat{1}\}$ ($\B_n^w$ with a maximal element added). We 
denote by

\begin{align*}
\bar \lambda(\c) = \bar \lambda(x_0, x_1) \bar \lambda(x_1, x_2) \cdots \bar \lambda(x_{\ell-1}, 
x_{\ell}),
\end{align*} 
the word of labels corresponding to a maximal chain $\c = (\hat 0 = x_0
\lessdot x_1 \lessdot \cdots \lessdot x_{\ell-1} \lessdot x_{\ell}= \hat 1)$.
In the case $\c \in \MM([\hat{0},[n]^{\mu}])$ it is immediate that $\bar \lambda 
(\c) \in \sym_{\mu}$ since in $\c$ each letter of $[n]$ appears exactly once and the 
possible sequences of colors in $\c$ are determined by $\mu$.
For example the chain
$$\hat{0} \lessdot \{2\}^{(1,0,0,0)} \lessdot \{1,2\}^{(1,0,0,1)}\lessdot 
\{1,2,3\}^{(1,1,0,1)}$$ corresponds to  the word of labels 
${\color{blue}2^1}{\color{orange}1^4}{\color{red}3^2}$.
Clearly, starting with $\sigma \in \sym_{\mu}$ we can also recover the chain $\c \in 
\MM([\hat{0},[n]^{\mu}])$ such that $\bar \lambda 
(\c)=\sigma$.
Indeed, for $\sigma \in \sym_{\mu}$ define the chain $\c(\sigma) \in \MM([\hat{0},[n]^{\mu}])$  
to be the one whose rank $0$ element is $\hat{0}$ and whose rank $i$ weighted subset is 
$$\{\sigma(1),\sigma(2),\dots,\sigma(i)\}^{\mathbf{e_{\clr(\sigma(1))}}+\mathbf{e_{
\clr(\sigma(2))}}+\cdots+\mathbf{e_{\clr(\sigma(i))}}}$$ for all $i\in[n]$. 
We conclude the following theorem.

\begin{proposition}\label{proposition:bijectionchainscoloredpermutations}
 The maps $\bar \lambda$ and $\c$ above define a bijection $$\MM([\hat{0},[n]^{\mu}])\simeq 
\sym_{\mu}.$$
\end{proposition}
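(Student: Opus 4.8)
The plan is to verify that the two maps $\bar\lambda$ and $\c$ described just before the statement are mutually inverse, so that they set up the claimed bijection. The key observation is that a maximal chain in $[\hat 0, [n]^\mu]$ has length $n$ (since $\B_n^w$ is ranked with the rank of $A^\nu$ equal to $|A| = |\nu|$, and the rank of $[n]^\mu$ is $n$), so it is a saturated chain $\hat 0 = x_0 \lessdot x_1 \lessdot \cdots \lessdot x_n = [n]^\mu$. By the description of the cover relations, each step adjoins exactly one new element of $[n]$ to the underlying set and adds exactly one $\mathbf{e_r}$ to the weak composition; hence along the chain the underlying sets grow $\varnothing \subsetneq \{a_1\} \subsetneq \{a_1,a_2\} \subsetneq \cdots \subsetneq [n]$ for a unique ordering $a_1, a_2, \dots, a_n$ of $[n]$, and the colors added are a sequence $r_1, r_2, \dots, r_n$ with $\sum_i \mathbf{e}_{r_i} = \mu$. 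Reading off the labels via \eqref{definition:labeling} gives the word $a_1^{r_1} a_2^{r_2} \cdots a_n^{r_n}$, which is a colored permutation of content $\mu$, i.e. an element of $\sym_\mu$; this shows $\bar\lambda$ maps $\MM([\hat 0, [n]^\mu])$ into $\sym_\mu$.

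Next I would check that $\c$ is well-defined, i.e. that for $\sigma \in \sym_\mu$ the prescribed sequence of weighted subsets really is a saturated chain from $\hat 0$ to $[n]^\mu$ in $\B_n^w$. At rank $i$ the set $\{\sigma(1), \dots, \sigma(i)\}$ has $i$ distinct elements (since $\sigma$ is a permutation) and is weighted by $\mathbf{e}_{\clr(\sigma(1))} + \cdots + \mathbf{e}_{\clr(\sigma(i))}$, which is a weak composition of $i$; passing from rank $i-1$ to rank $i$ adjoins the single element $\sigma(i)$ and adds $\mathbf{e}_{\clr(\sigma(i))}$, which is exactly a cover relation. At rank $n$ the set is $[n]$ with weight $\mathbf{e}_{\clr(\sigma(1))} + \cdots + \mathbf{e}_{\clr(\sigma(n))} = \mu(\sigma) = \mu$, since $\sigma \in \sym_\mu$; so $\c(\sigma) \in \MM([\hat 0, [n]^\mu])$.

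Finally I would confirm the two compositions are identities. For $\sigma \in \sym_\mu$, the label of the $i$-th edge of $\c(\sigma)$ is, by \eqref{definition:labeling}, the pair consisting of the newly added element $\sigma(i)$ and the newly added color $\clr(\sigma(i))$, i.e. $\sigma(i)^{\clr(\sigma(i))}$; concatenating over $i$ recovers $\sigma$ exactly, so $\bar\lambda(\c(\sigma)) = \sigma$. Conversely, given $\c \in \MM([\hat 0, [n]^\mu])$ with $\bar\lambda(\c) = a_1^{r_1}\cdots a_n^{r_n}$ as above, the chain $\c(\bar\lambda(\c))$ has at rank $i$ the set $\{a_1, \dots, a_i\}$ weighted by $\mathbf{e}_{r_1} + \cdots + \mathbf{e}_{r_i}$, which is precisely $x_i$ by the analysis of the first paragraph; hence $\c(\bar\lambda(\c)) = \c$. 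There is no serious obstacle here: the only thing to be a little careful about is the bookkeeping that the rank function on $\B_n^w$ is $A^\nu \mapsto |A| = |\nu|$ so that maximal chains in $[\hat 0, [n]^\mu]$ have length exactly $n$ and each cover step changes the rank by one — this is what guarantees that a maximal chain visits sets of every size $0, 1, \dots, n$ and so determines a genuine permutation rather than just a partial order of adjunctions.
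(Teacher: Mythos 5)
Your proof is correct and follows the same route the paper takes: the paper defines $\bar\lambda$ on maximal chains, observes that its image lands in $\sym_\mu$, defines the inverse chain $\c(\sigma)$ by listing its rank-$i$ elements, and then simply states the bijection, whereas you spell out the well-definedness of both maps and the verification that the compositions are the identity. Same idea, more written-out detail.
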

Note also that for a bounded poset $P$ the sets $\MM(P)$ and $\MM(P\setminus \{\hat{0},\hat{1}\})$ 
are in bijection by associating a chain $\c \in \MM(P)$ with the chain $\bar\c:=\c\setminus 
\{\hat{0},\hat{1}\} \in \MM(P\setminus \{\hat{0},\hat{1}\})$. For $\sigma \in \sym_{\mu}$ we write 
$\bar \c(\sigma):=c(\sigma) \setminus \{\hat 0, \hat{1}\}$ for the corresponding chain in 
$(\hat{0},[n]^{\mu})$. 

The codimension one chains in $(\hat{0},[n]^{\mu})$ are unrefinable except between a pair of 
adjacent elements in $[\hat{0},[n]^{\mu}]$ so the generating relations of equation 
(\ref{equation:cohomologyrelations}) correspond to the two different type of intervals of length $2$ 
in  $[\hat{0},[n]^{\mu}]$. The intervals of length 
$2$ happen when  two elements $x$ and $y$ have been added to a weighted 
subset $A^{a}$ and the weight has been increased accordingly. The \emph{Type I} intervals occur 
when the weight has been increased by $2\color{red}\mathbf{e_i}$ and the \emph{Type II} intervals 
when the weight has been increased by 
$\color{red}\mathbf{e_i}+\color{blue}\mathbf{e_j}$ with $i\neq j \in \PP$ (see 
Figure~\ref{figure:cohomologyrelations}).
In the following we denote $\alpha x^i y^j \beta$ a colored permutation where $\alpha$ and 
$\beta$ are the starting and trailing colored subwords.

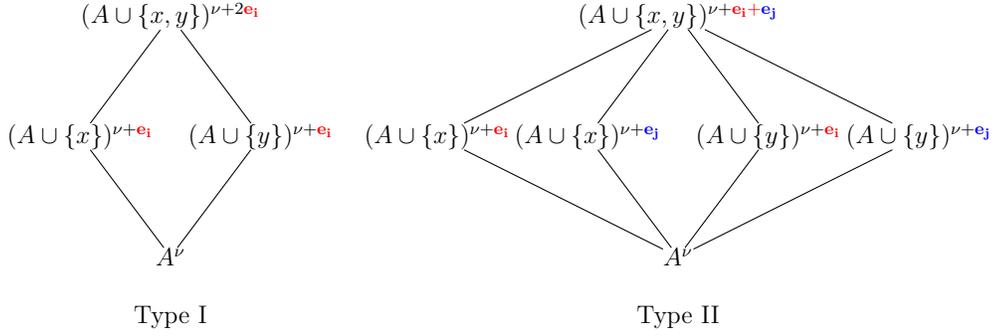
\begin{figure}[ht]
\begin{center} 
\begin{tikzpicture}[line join=bevel,scale=0.8]

\begin{scope}
  \tikzstyle{every node}=[inner sep=0pt, scale=0.80, minimum width=4pt]
  \node (v0) at (0,0)  {$A^{\nu}$};
  \node (v12) at (0,4)  {$(A\cup\{ x,y\})^ {\nu+2\color{red}\mathbf{e_i}}$};
  \node (v1) at (-1.5,2)  {$(A\cup\{ x\})^ {\nu+\color{red}\mathbf{e_i}}$};
  \node (v2) at (1.5,2)  {$(A\cup\{ y\})^ {\nu+\color{red}\mathbf{e_i}}$};
  \draw [] (v12) -- (v1);
  \draw [] (v12) -- (v2);
  \draw [] (v2) -- (v0); 
  \draw [] (v1) -- (v0);
  \node at (0,-1) {Type I};
 \end{scope}

\begin{scope}[xshift=240]
\tikzstyle{every node}=[ inner sep=0pt, scale=0.80, minimum width=4pt]
  \node (u0) at (0,0)  {$A^{\nu}$};
  \node (uxyij) at (0,4)  {$(A\cup\{ x,y\})^ {\nu+\color{red}\mathbf{e_i}+\color{blue}\mathbf{e_j}}$};
  \node (uxi) at (-4,2)  {$(A\cup\{ x\})^ {\nu+\color{red}\mathbf{e_i}}$};
  \node (uxj) at (-1.5,2) {$(A\cup\{ x\})^ {\nu+\color{blue}\mathbf{e_j}}$};
  \node (uyi) at (1.5,2)  {$(A\cup\{ y\})^ {\nu+\color{red}\mathbf{e_i}}$};
  \node (uyj) at (4,2)  {$(A\cup\{ y\})^ {\nu+\color{blue}\mathbf{e_j}}$};

\draw (u0) -- (uxi);
\draw (u0) -- (uxj);
\draw (u0) -- (uyi);
\draw (u0) -- (uyj);

\draw (uxyij) -- (uxi);
\draw (uxyij) -- (uxj);
\draw (uxyij) -- (uyi);
\draw (uxyij) -- (uyj);

\node at (0,-1) {Type II};

\end{scope}
\end{tikzpicture}
\end{center}
\caption[]{Intervals of length $2$ in $\B_n^w$}\label{figure:cohomologyrelations}
\end{figure}

\begin{theorem}\label{theorem:cohrelations}
 The set $\{ \bar{\c}(\sigma) \mid  \sigma \in \sym_{\mu}\}$ is a generating set for 
$\widetilde
H^{n-2}((\hat 0, [n]^{\mu}))$,
subject only to the relations for $i \ne j \in \supp(\mu)$

\begin{align}&\bar{\c}(\alpha x^i y^i \beta) + \bar{\c}(\alpha y^i x^i 
\beta)=0 \label{relation:1h}\\
&\bar{\c}(\alpha x^i y^j \beta) + \bar{\c}(\alpha y^j x^i 
\beta)+ \bar{\c}(\alpha y^i x^j \beta) + \bar{\c}(\alpha x^j y^i 
\beta)= 0\label{relation:2h}
\end{align}

\end{theorem}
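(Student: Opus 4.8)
The plan is to apply the presentation of top cohomology recalled at the beginning of this section and rewrite both the generators and the relations in the language of colored permutations. By Proposition~\ref{proposition:bijectionchainscoloredpermutations} every maximal chain of $[\hat 0,[n]^{\mu}]$ has label word in $\sym_{\mu}$ and hence exactly $n$ covering steps, so $[\hat 0,[n]^{\mu}]$ is pure of length $n$, its proper part $(\hat 0,[n]^{\mu})$ is pure of length $n-2$, and its top cohomology is
$\widetilde H^{n-2}((\hat 0,[n]^{\mu})) = \langle \MM((\hat 0,[n]^{\mu})) \mid \text{coboundary relations}\rangle$.
Composing the bijection of Proposition~\ref{proposition:bijectionchainscoloredpermutations} with the bijection $\c\mapsto\bar\c$ between $\MM([\hat 0,[n]^{\mu}])$ and $\MM((\hat 0,[n]^{\mu}))$ identifies the generating set $\MM((\hat 0,[n]^{\mu}))$ with $\{\bar\c(\sigma)\mid \sigma\in\sym_{\mu}\}$, which gives the asserted generators.

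It remains to identify the coboundary relations, one for each codimension-one chain of $(\hat 0,[n]^{\mu})$. As noted just before the theorem, such a chain together with $\hat 0$ and $[n]^{\mu}$ is unrefinable except within a single interval $[A^{\nu},C^{\rho}]$ of length $2$ of $[\hat 0,[n]^{\mu}]$. Since each cover of $\B_n^w$ adds a single element to the underlying set and a single $\mathbf{e}_r$ to the weight vector, such an interval satisfies $C=A\cup\{x,y\}$ for distinct $x,y\in[n]\setminus A$, with $\rho-\nu$ equal either to $2\mathbf{e}_i$ (Type~I) or to $\mathbf{e}_i+\mathbf{e}_j$ with $i\ne j$ (Type~II); these are exactly the intervals of Figure~\ref{figure:cohomologyrelations}, and the open part of such an interval consists of precisely two, respectively four, elements (the degenerate cases $A=\varnothing$ and $C=[n]$ being of the same two types). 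Letting $\alpha$ (resp.\ $\beta$) denote the label word of the part of the chain below $A^{\nu}$ (resp.\ above $C^{\rho}$), in the Type~I case the two refinements, through $(A\cup\{x\})^{\nu+\mathbf{e}_i}$ and $(A\cup\{y\})^{\nu+\mathbf{e}_i}$, have label words $\alpha x^i y^i \beta$ and $\alpha y^i x^i \beta$ by~\eqref{definition:labeling}, so relation~\eqref{equation:cohomologyrelations} becomes~\eqref{relation:1h}; in the Type~II case the four refinements, through $(A\cup\{x\})^{\nu+\mathbf{e}_i}$, $(A\cup\{y\})^{\nu+\mathbf{e}_j}$, $(A\cup\{y\})^{\nu+\mathbf{e}_i}$ and $(A\cup\{x\})^{\nu+\mathbf{e}_j}$, have label words $\alpha x^i y^j \beta$, $\alpha y^j x^i \beta$, $\alpha y^i x^j \beta$ and $\alpha x^j y^i \beta$, so relation~\eqref{equation:cohomologyrelations} becomes~\eqref{relation:2h}; the colors occurring automatically lie in $\supp(\mu)$. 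Conversely, any choice of $\alpha,\beta,x,y$ and $i$ (resp.\ $i\ne j$) for which the displayed words lie in $\sym_{\mu}$ comes from a codimension-one chain of the corresponding type, so the coboundary relations are precisely the relations~\eqref{relation:1h} and~\eqref{relation:2h}, which yields the stated presentation.

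Apart from this essentially definitional unwinding, the only steps demanding a little care are the combinatorial classification of the length-$2$ intervals of $\B_n^w$---in particular checking that a Type~II open interval contains no elements besides the four displayed ones, and that the intervals abutting $\hat 0$ or $[n]^{\mu}$ are still of Type~I or Type~II---together with the observation that the sign $(-1)^i$ in the coboundary formula is, for a fixed codimension-one chain, a single global scalar multiplying the whole relation (only one summation index contributes, namely the location of the length-$2$ interval) and hence may be discarded. I expect this bookkeeping to be the main, and only mild, obstacle.
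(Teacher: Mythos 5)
Your proposal is correct and follows essentially the same approach as the paper: identify generators via Proposition~\ref{proposition:bijectionchainscoloredpermutations} and the bijection $\c\mapsto\bar\c$, then observe that the coboundary relations~\eqref{equation:cohomologyrelations} arising from Type~I and Type~II length-$2$ intervals of Figure~\ref{figure:cohomologyrelations} are exactly~\eqref{relation:1h} and~\eqref{relation:2h}. The paper's own proof is a two-sentence summary of exactly this, and your write-up simply spells out the combinatorial bookkeeping (classification of the length-$2$ intervals, reading off the label words, discarding the global sign $(-1)^i$) that the paper leaves implicit.
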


\begin{proof}
By the comments above we know that $\{\bar{\c}(\sigma)\,\mid\,\sigma \in \sym_{\mu}\}$ is 
a set of generators in $\MM((\hat 0, [n]^{\mu}))$. Observe that the relations 
(\ref{relation:1h}) and (\ref{relation:2h}) correspond exactly to the cohomology relations of Type I 
and Type II respectively and these generate the space of cohomology relations.
\end{proof}

 \subsection{The isomorphism}
Following relations (\ref{relation:colantisym1}) and (\ref{relation:colantisym2}) we 
can conclude a similar proposition for $\L(\mu)$.

 \begin{proposition}\label{proposition:lrelations}
  The set $\{ \wedge(\sigma) \mid  \sigma \in \sym_{\mu}\}$ is a generating set for 
$\L(\mu)$ subject only to the relations for $i \ne j \in \supp(\mu)$

\begin{align}&\wedge(\alpha x^i y^i \beta) + \wedge(\alpha y^i x^i 
\beta)=0 \label{relation:1}\\
&\wedge(\alpha x^i y^j \beta) + \wedge(\alpha y^j x^i 
\beta)+ \wedge(\alpha y^i x^j \beta) + \wedge(\alpha x^j y^i 
\beta)= 0\label{relation:2}
\end{align}
  
 \end{proposition}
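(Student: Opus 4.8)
The plan is to obtain the presentation of $\L(\mu)$ directly from the definition $\L_{\PP}(V) = T(V)/\langle CR_2\rangle$ and the fact that $\L(\mu)$ is the subspace of $\L_{\PP}(|\mu|)$ spanned by wedged colored permutations of content $\mu$. First I would recall that $\L_{\PP}(V)$ is the quotient of the free associative algebra on the colored letters by the ideal generated by the monochromatic antisymmetry relations \eqref{relation:colantisym1} and the mixed antisymmetry relations \eqref{relation:colantisym2}. Restricting to the multilinear-in-$[n]$, content-$\mu$ component, the spanning set $\{\wedge(\sigma)\mid \sigma\in\sym_\mu\}$ is immediate, since every colored permutation of content $\mu$ uses each letter of $[n]$ exactly once with colors prescribed (up to order) by $\mu$, and any element of $\L(\mu)$ is a linear combination of such wedged words.

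Next I would identify which consequences of the ideal relations survive in this multilinear component. An element of $\langle CR_2\rangle$ lying in the content-$\mu$ multilinear part is a linear combination of elements of the form $w_1 \cdot r \cdot w_2$ where $r$ is one of the generators of $CR_2$ and the whole product is a colored permutation word (each of $1,\dots,n$ appearing once, colors matching $\mu$). This forces $w_1 = \alpha$ and $w_2 = \beta$ to be complementary colored subwords and $r$ to be an instance of \eqref{relation:colantisym1} or \eqref{relation:colantisym2} in two letters $x,y\in[n]$ not appearing in $\alpha\beta$, with colors $i$ (resp. $i,j$) drawn from $\supp(\mu)$. Substituting $r = x^i\otimes y^i + y^i\otimes x^i$ gives exactly relation \eqref{relation:1}, and substituting $r = x^i\otimes y^j + y^i\otimes x^j + y^j\otimes x^i + x^j\otimes y^i$ gives exactly relation \eqref{relation:2} (after matching the summands to $\wedge(\alpha x^i y^j\beta)$, etc.). Conversely every such relation clearly holds in $\L(\mu)$. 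Hence the span of these substituted relations is precisely the degree-$(\mu)$ multilinear part of $\langle CR_2\rangle$, which proves the proposition.

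The one point requiring care — and the step I expect to be the main obstacle — is the claim that the content-$\mu$ multilinear part of the ideal $\langle CR_2\rangle$ is spanned by the ``sandwich'' elements $\alpha\, r\,\beta$ with $r$ a single generator and $\alpha,\beta$ honest colored subwords (rather than arbitrary tensors). This is the standard fact that for a quotient $T(V)/\langle R\rangle$ with $R\subseteq V^{\otimes 2}$, the ideal in a fixed multidegree is spanned by $u\otimes r\otimes v$ with $r\in R$ and $u,v$ monomials of complementary multidegree; I would state it and note it follows because $T(V)$ has a monomial basis and the two-sided ideal generated by $R$ is spanned by $u\otimes r\otimes v$ with $u,v$ monomials, then restrict to the multidegree where exactly the letters $1,\dots,n$ occur (once each) with color-content $\mu$, which forces $u,v$ to be colored words and $r$ to involve two fresh letters. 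Once this is in place the rest is the bookkeeping already carried out, in the cohomological setting, in the proof of Theorem~\ref{theorem:cohrelations}; indeed the cleanest write-up is to observe that relations \eqref{relation:1}--\eqref{relation:2} for $\L(\mu)$ are formally identical to \eqref{relation:1h}--\eqref{relation:2h} for $\widetilde H^{n-2}((\hat 0,[n]^\mu))$ under $\bar\c(\sigma)\leftrightarrow\wedge(\sigma)$, so the argument is literally the same with ``coboundary of a codimension-one chain'' replaced by ``image of a generator of $CR_2$ under multiplication.''
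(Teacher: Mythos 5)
Your proof is correct and takes essentially the same approach as the paper, which gives no formal argument for this proposition beyond asserting that it follows from the defining relations \eqref{relation:colantisym1}--\eqref{relation:colantisym2}. The ``one point requiring care'' you flag is handled exactly as you say: $T(V^S)$ is multigraded by the pair (underlying-letter multidegree, color content), each generator of $CR_2$ is homogeneous for this grading, hence the ideal is homogeneous and its multilinear content-$\mu$ piece is spanned by the sandwich elements $\alpha\,r\,\beta$ with $\alpha,\beta$ colored words on the complementary letters -- which is the standard fact the paper is implicitly invoking.
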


\begin{theorem}\label{theorem:explicitisomorphism}
 For each $\mu \in \wcomp_{n}$,  the map
$\varphi:\L(\mu)\rightarrow \widetilde H^{n-2}((\hat 0, [n]^{\mu}))$ determined by
\[
\varphi(\wedge(\sigma))=\bar{\c}(\sigma) \text{ for all }\sigma \in \sym_{\mu},\]
is an $\sym_n$-module isomorphism.
\end{theorem}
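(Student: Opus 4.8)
The plan is to show that $\varphi$ is a well-defined linear map, surjective, and injective, and that it commutes with the $\sym_n$-action; since both sides are finite-dimensional, well-definedness plus surjectivity plus an equality of dimensions already forces injectivity, but here we can see all of it at once from the presentations.

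First I would establish that $\varphi$ is well-defined as a linear map. By Proposition~\ref{proposition:lrelations}, $\L(\mu)$ is the quotient of the free vector space on $\{\wedge(\sigma)\mid\sigma\in\sym_\mu\}$ by the span of the relations \eqref{relation:1} and \eqref{relation:2}. The map sending $\wedge(\sigma)\mapsto\bar\c(\sigma)$ from the free vector space on these generators to $\widetilde H^{n-2}((\hat0,[n]^\mu))$ carries relation \eqref{relation:1} to relation \eqref{relation:1h} and relation \eqref{relation:2} to relation \eqref{relation:2h}; by Theorem~\ref{theorem:cohrelations} these latter expressions are exactly zero in the cohomology. Hence the map descends to a well-defined linear map $\varphi:\L(\mu)\to\widetilde H^{n-2}((\hat0,[n]^\mu))$.

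Next I would argue bijectivity. Comparing the two presentations (Proposition~\ref{proposition:lrelations} for the domain and Theorem~\ref{theorem:cohrelations} for the codomain), both are quotients of the free vector space on the index set $\sym_\mu$ by the span of formally identical families of relations — relations \eqref{relation:1},\eqref{relation:2} map bijectively onto relations \eqref{relation:1h},\eqref{relation:2h} under $\wedge(\sigma)\leftrightarrow\bar\c(\sigma)$. Therefore $\varphi$ is an isomorphism of vector spaces: it is the identity on generators and on relations, so it induces an isomorphism on the quotients. (Equivalently, one could invoke Theorem~\ref{theorem:dimension}, which gives $\dim\L(\mu)=|\ninc_\mu|$, together with the analogous count for the top cohomology coming from the EL-shellability of Theorem~\ref{theorem:ellabeling}, to conclude injectivity from surjectivity; but the presentation-matching argument is cleaner and avoids dimension bookkeeping.)

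Finally I would check $\sym_n$-equivariance. A permutation $\tau\in\sym_n$ acts on the generator $\wedge(\sigma)$ of $\L(\mu)$ by relabeling each letter $x^i$ to $\tau(x)^i$, which produces $\wedge(\tau\cdot\sigma)$ where $\tau\cdot\sigma\in\sym_\mu$ is the colored permutation with underlying permutation $\tau\tilde\sigma$ and the colors carried along. On the cohomology side, $\tau$ acts on $\B_n^w$ by $\tau B^\nu=(\tau B)^\nu$, and inspecting the definition of $\c(\sigma)$ — whose rank-$i$ element is $\{\sigma(1),\dots,\sigma(i)\}$ with the corresponding accumulated weight — one sees that $\tau$ sends $\bar\c(\sigma)$ to $\bar\c(\tau\cdot\sigma)$, i.e. the edge-labeling bijection of Proposition~\ref{proposition:bijectionchainscoloredpermutations} intertwines the two actions. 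Hence $\varphi(\tau\cdot\wedge(\sigma))=\varphi(\wedge(\tau\cdot\sigma))=\bar\c(\tau\cdot\sigma)=\tau\cdot\bar\c(\sigma)=\tau\cdot\varphi(\wedge(\sigma))$, so $\varphi$ is an $\sym_n$-module isomorphism. The only genuinely substantive inputs are the two presentations (Proposition~\ref{proposition:lrelations} and Theorem~\ref{theorem:cohrelations}), both already in hand, so the main thing to be careful about is the bookkeeping in verifying that the two sets of relations correspond bijectively under the generator-matching and that the $\sym_n$-actions on the chain side genuinely agree with the relabeling action — that compatibility check is the only place an error could hide.
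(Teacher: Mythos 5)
Your proof is correct and takes essentially the same approach as the paper: both compare the presentation of $\L(\mu)$ from Proposition~\ref{proposition:lrelations} with the presentation of $\widetilde H^{n-2}((\hat 0,[n]^\mu))$ from Theorem~\ref{theorem:cohrelations}, observe that $\varphi$ matches generators to generators and relations to relations, and check $\sym_n$-equivariance. You simply spell out more explicitly what the paper leaves terse — in particular that both presentations are ``subject \emph{only} to'' the stated relations, which is exactly what makes the generator-and-relation matching produce an isomorphism rather than merely a well-defined surjection.
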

 
\begin{proof}
The generators of the two sets $\L(\mu)$ and $\widetilde H^{n-2}((\hat 0, [n]^{\mu}))$ are indexed 
by colored permutations in $\sym_{\mu}$ and $\varphi$ maps generators to generators. 
By Theorem \ref{theorem:cohrelations} and Proposition \ref{proposition:lrelations}, $\varphi$ also 
maps relations to relations and clearly respects the $\sym_n$ action.
\end{proof}

\section{Homotopy type of maximal intervals in $\B_n^w$}\label{section:homotopytype}

\subsection{EL-labeling}\label{section:ellabeling}

Let $P$ be a bounded poset. Recall from Section~\ref{section:isomorphism} that an edge labeling is a 
map $\bar 
\lambda: \mathcal E(P) \to \Lambda$,  from the set $\mathcal E(P)$ of edges of the 
Hasse diagram $P$ to some fixed poset $\Lambda$. Recall also that to any maximal chain $c = (\hat 0 
= x_0 \lessdot x_1 \lessdot \cdots \lessdot x_{\ell-1} \lessdot x_{\ell}= \hat 1)$ in $P$ 
corresponds a word of labels
$\bar \lambda(c) = \bar \lambda(x_0, x_1) \bar \lambda(x_1, x_2) \cdots \bar \lambda(x_{\ell-1}, 
x_{\ell}).$
We say that  $c$ is  \emph{increasing} if its word of labels $\bar \lambda(c)$ is
\emph{strictly}  increasing, that is, $c$ is  increasing if 
\begin{align*}
 \bar \lambda(x_0, x_1) < \bar \lambda(x_1, x_2)<  \cdots < \bar \lambda(x_{\ell-1}, 
x_{\ell}). 
\end{align*}
  We say 
that  $c
$ is  \emph{ascent-free} if its word of labels $\bar \lambda(c)$ has no 
ascents,
i.e. $\bar \lambda(x_i, x_{i+1}) \not<  \bar \lambda(x_{i+1}, x_{i+2}) $, for all 
$i=0,\dots,\ell-2$.
\emph{ An edge-lexicographical
labeling} (EL-labeling, for short)  of
$P$ is an edge labeling such that in each closed
interval $[x,y]$ of $P$, there is a unique  increasing maximal chain, and this chain
lexicographically precedes all other maximal chains of $[x,y]$. See \cite{Bjorner1980} and 
\cite{BjornerWachs1983} for more information about EL-labelings.

We let $\Lambda=[n+1]\times\PP$ be the product poset of the totally ordered sets $[n+1]$ 
and $\PP$ and we define for any  $S\subseteq \PP$ the labeling $\bar 
\lambda:\E(\widehat{\B_n^S})\rightarrow [n+1]\times\PP$ 
by
\begin{align}\label{definition:ellabelinghat}
 \bar\lambda(A^{\nu},(A\cup \{x\})^{\nu+\mathbf{e_i}})&=x^i,\\
 \bar\lambda([n]^{\mu},\hat{1})&=(n+1)^1.\nonumber
\end{align}
In Figure \ref{figure:weightedbooleanposetn3k2EL} this labeling is illustrated in the case of 
$\widehat{\B_n^{[2]}}$. The edges have been differentiated by shape and color 
corresponding to the different 
labels that appear in the legend. Note that this labeling restricts to the labeling of equation
(\ref{definition:labeling}) in the maximal intervals $[\hat{0},[n]^{\mu}]$.
\begin{figure}[ht]
\begin{center} 
\begin{tikzpicture}[scale=0.6]

\tikzstyle{style11}=[color=black,thin]
\tikzstyle{style21}=[color=blue,dashed]
\tikzstyle{style31}=[color=violet,dashdotted]
\tikzstyle{style12}=[color=red,thick]
\tikzstyle{style22}=[color=green,double]
\tikzstyle{style32}=[color=orange,dotted,thick]
\tikzstyle{style1}=[color=yellow,dash pattern=on 10pt off 6pt,thick]

\tikzstyle{every node}=[inner sep=0pt, scale=0.7, minimum width=4pt]

\node (nempty) at (0,0) {$\emptyset^{(0,0)}$};

\node (n1-10) at (-5,3) {$1^{(1,0)}$};
\node (n2-10) at (-3,3) {$2^{(1,0)}$};
\node (n3-10) at (-1,3) {$3^{(1,0)}$};
\node (n1-01) at (1,3) {$1^{(0,1)}$};
\node (n2-01) at (3,3) {$2^{(0,1)}$};
\node (n3-01) at (5,3) {$3^{(0,1)}$};

\node (n12-20) at (-8,6) {$12^{(2,0)}$};
\node (n13-20) at (-6,6) {$13^{(2,0)}$};
\node (n23-20) at (-4,6) {$23^{(2,0)}$};
\node (n12-11) at (-2,6) {$12^{(1,1)}$};
\node (n13-11) at (0,6) {$13^{(1,1)}$};
\node (n23-11) at (2,6) {$23^{(1,1)}$};
\node (n12-02) at (4,6) {$12^{(0,2)}$};
\node (n13-02) at (6,6) {$13^{(0,2)}$};
\node (n23-02) at (8,6) {$23^{(0,2)}$};

\node (n123-30) at (-6,9) {$123^{(3,0)}$};
\node (n123-21) at (-2,9) {$123^{(2,1)}$};
\node (n123-12) at (2,9) {$123^{(1,2)}$};
\node (n123-03) at (6,9) {$123^{(0,3)}$};

\node (n1) at (0,12) {$\hat{1}$};

\draw[style11] (nempty)--(n1-10);
\draw[style21] (nempty)--(n2-10);
\draw[style31] (nempty)--(n3-10);
\draw[style12] (nempty)--(n1-01);
\draw[style22] (nempty)--(n2-01);
\draw[style32] (nempty)--(n3-01);

\draw[style21] (n1-10)--(n12-20);
\draw[style22] (n1-10)--(n12-11);
\draw[style31] (n1-10)--(n13-20);
\draw[style32] (n1-10)--(n13-11);

\draw[style11] (n2-10)--(n12-20);
\draw[style12] (n2-10)--(n12-11);
\draw[style31] (n2-10)--(n23-20);
\draw[style32] (n2-10)--(n23-11);

\draw[style21] (n3-10)--(n23-20);
\draw[style22] (n3-10)--(n23-11);
\draw[style11] (n3-10)--(n13-20);
\draw[style12] (n3-10)--(n13-11);

\draw[style22] (n1-01)--(n12-02);
\draw[style21] (n1-01)--(n12-11);
\draw[style32] (n1-01)--(n13-02);
\draw[style31] (n1-01)--(n13-11);

\draw[style12] (n2-01)--(n12-02);
\draw[style11] (n2-01)--(n12-11);
\draw[style32] (n2-01)--(n23-02);
\draw[style31] (n2-01)--(n23-11);

\draw[style22] (n3-01)--(n23-02);
\draw[style21] (n3-01)--(n23-11);
\draw[style12] (n3-01)--(n13-02);
\draw[style11] (n3-01)--(n13-11);

\draw[style31] (n12-20)--(n123-30);
\draw[style32] (n12-20)--(n123-21);
\draw[style21] (n13-20)--(n123-30);
\draw[style22] (n13-20)--(n123-21);
\draw[style11] (n23-20)--(n123-30);
\draw[style12] (n23-20)--(n123-21);

\draw[style31] (n12-11)--(n123-21);
\draw[style32] (n12-11)--(n123-12);
\draw[style21] (n13-11)--(n123-21);
\draw[style22] (n13-11)--(n123-12);
\draw[style11] (n23-11)--(n123-21);
\draw[style12] (n23-11)--(n123-12);

\draw[style31] (n12-02)--(n123-12);
\draw[style32] (n12-02)--(n123-03);
\draw[style21] (n13-02)--(n123-12);
\draw[style22] (n13-02)--(n123-03);
\draw[style11] (n23-02)--(n123-12);
\draw[style12] (n23-02)--(n123-03);

\draw[style1] (n1)--(n123-12);
\draw[style1] (n1)--(n123-03);
\draw[style1] (n1)--(n123-30);
\draw[style1] (n1)--(n123-21);

\tikzstyle{every node}=[blue, inner sep=0pt, scale=0.6, minimum width=4pt]

\node at (-9,2.5) {$1^1$};
\draw[style11] (-8.5,2.5)--(-7,2.5);
\node at (-9,2) {$2^1$};
\draw[style21] (-8.5,2)--(-7,2);
\node at (-9,1.5) {$3^1$};
\draw[style31] (-8.5,1.5)--(-7,1.5);
\node at (-9,1) {$1^2$};
\draw[style12] (-8.5,1)--(-7,1);
\node at (-9,0.5) {$2^2$};
\draw[style22] (-8.5,0.5)--(-7,0.5);
\node at (-9,0) {$3^2$};
\draw[style32] (-8.5,0)--(-7,0);
\node at (-9,-.5) {$4^1$};
\draw[style1] (-8.5,-0.5)--(-7,-0.5);
\end{tikzpicture}
\end{center}
\caption[]{EL-labeling in $\widehat{\B_3^{[2]}}$}\label{figure:weightedbooleanposetn3k2EL}
\end{figure}
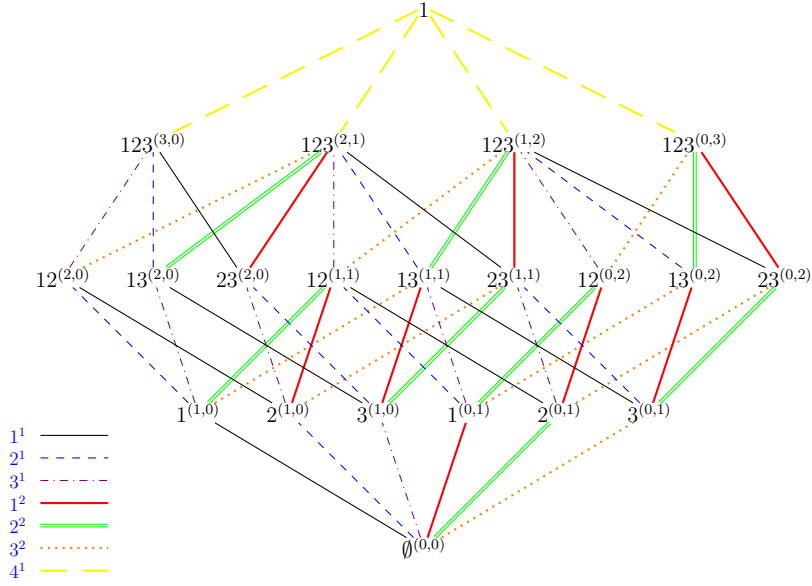

 \begin{theorem}\label{theorem:ellabelingposet}
 The labeling $\bar \lambda:\E(\widehat{\B_{n}^S})\rightarrow [n+1]\times\PP$ in 
(\ref{definition:ellabelinghat}) is an
EL-labeling of $\widehat{\B_{n}^S}$.  
\end{theorem}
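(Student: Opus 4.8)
The plan is to verify the two defining properties of an EL-labeling directly for the interval $[A^\nu, B^\eta]$ in $\widehat{\B_n^S}$: (i) there is a unique maximal chain whose label word is strictly increasing in the product order on $[n+1]\times\PP$, and (ii) that chain lexicographically precedes every other maximal chain of the interval. First I would reduce to the case of an interval $[A^\nu, B^\eta]$ with $B\subseteq[n]$; the intervals involving the artificial top $\hat 1$ are handled by noting that $\bar\lambda([n]^\mu,\hat1)=(n+1)^1$ is strictly larger (in the first coordinate) than every other label $x^i$ with $x\in[n]$, so the increasing/lexicographic-least chain of $[A^\nu,\hat1]$ is obtained from that of $[A^\nu,[n]^\mu]$ by appending the final edge, and uniqueness is inherited. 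So fix $A^\nu\le B^\eta$ with $B\setminus A=\{b_1<\dots<b_t\}$ (as elements of $[n]$) and $\eta-\nu=\sum_{r}m_r\mathbf e_r$ a weak composition of $t$; a maximal chain of $[A^\nu,B^\eta]$ is precisely a way of adjoining the elements of $B\setminus A$ one at a time, each carrying one ``color'' from the multiset determined by $\eta-\nu$, so its label word is a permutation of a fixed multiset of colored letters $\{b_1^{c_1},\dots\}$ — exactly the content data of Proposition \ref{proposition:bijectionchainscoloredpermutations} restricted to this interval.

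Next I would identify the candidate increasing chain. Since the label word must use each element $b_r$ of $B\setminus A$ exactly once and the colors are a fixed multiset, a word strictly increasing in the product order on $[n+1]\times\PP$ must have its $[n]$-coordinates strictly increasing, hence equal to $b_1,b_2,\dots,b_t$ in that order; and then the colors attached are forced to be weakly increasing along this sequence, but strict increase in the \emph{product} order forces each successive pair $b_r^{c_r}<b_{r+1}^{c_{r+1}}$, which (because $b_r<b_{r+1}$ already) holds iff $c_r\le c_{r+1}$. So the increasing chain exists and is unique: it attaches $b_1,\dots,b_t$ in increasing order of the underlying letters while distributing the multiset of colors in weakly increasing order along that sequence. (One should remark this is well-defined precisely because the colors are an unordered multiset coming from $\eta-\nu$.) For the lexicographic-minimality claim, I would argue that any other maximal chain of $[A^\nu,B^\eta]$, read left to right, first deviates from the increasing chain at some edge; at that position the increasing chain places the label $b_r^{c}$ with $b_r$ the smallest not-yet-used letter and $c$ the smallest available color for it, and since in the product order on $[n+1]\times\PP$ comparing first by the $[n]$-coordinate then the color is consistent with (a refinement fixing ties in) the partial order, the increasing chain's label at that step is $\le$ any competing label, with equality excluded by the choice of first deviation — so the increasing chain is lexicographically first. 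Here I would fix once and for all a linear extension of $[n+1]\times\PP$ (say lex order on pairs) to make ``lexicographically precedes'' unambiguous, as is standard.

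The main obstacle I anticipate is bookkeeping rather than conceptual: one must be careful that ``strictly increasing in the product poset $[n+1]\times\PP$'' is genuinely more restrictive than ``strictly increasing in some linear extension,'' and check that the forced chain really is increasing in the product order (both coordinates nondecreasing, and not both equal at consecutive steps) — this is where the hypothesis that distinct letters of $B\setminus A$ are adjoined, so the $[n]$-coordinates are automatically distinct and strictly increasing along the candidate, does the work and prevents a ``monochromatic'' obstruction. A secondary point to handle cleanly is that the colors available need not be distinct (the multiset $\eta-\nu$ may repeat a color), so ``weakly increasing distribution of the color multiset'' must be spelled out as: list the colors of the multiset in nondecreasing order $c_1\le\dots\le c_t$ and attach $c_r$ to $b_r$; uniqueness of the increasing chain then follows from uniqueness of the sorted order of a multiset. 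Once these points are in place, both EL-axioms follow, and Cohen-Macaulayness of $\widehat{\B_n^S}$ is then immediate from the standard theory of EL-shellability (\cite{Bjorner1980,BjornerWachs1983,Wachs2007}), giving Theorem \ref{theorem:ellabeling}.
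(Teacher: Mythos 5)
Your argument for the intervals $[A^\nu,B^\eta]$ with $B\subseteq[n]$ matches the paper's: reduce to $[\hat 0,[n]^\mu]$, observe that strict increase in the product poset $[n+1]\times\PP$ forces the underlying letters to appear in increasing order $b_1<\dots<b_t$ with the multiset of colors $\eta-\nu$ distributed in weakly increasing order, and check that this chain is lexicographically least.

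However, there is a genuine gap in your handling of the intervals $[A^\nu,\hat 1]$. You assert that $(n+1)^1$ is ``strictly larger (in the first coordinate) than every other label,'' and from this infer that the increasing chain of $[A^\nu,\hat 1]$ is obtained by appending the final edge to the increasing chain of $[A^\nu,[n]^\mu]$, with ``uniqueness inherited.'' This does not work in the product order: $n^u<(n+1)^1$ requires \emph{both} $n<n+1$ \emph{and} $u\le 1$, so it holds only when $u=1$. The increasing chain of $[A^\nu,[n]^\mu]$ has last label $m^u$ where $m=\max([n]\setminus A)$ and $u$ is the \emph{largest} color of the multiset $\mu-\nu$; the step $m^u<(n+1)^1$ is an increase precisely when $u=1$, and since the colors along an increasing chain are weakly increasing, this forces every added color to equal $1$, i.e.\ $\mu=\nu+(n-|A|)\mathbf{e_1}$. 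That constraint is the entire content of uniqueness for $[A^\nu,\hat 1]$: there are many maximal elements $[n]^\mu$ above $A^\nu$, each contributing its own unique increasing chain inside $[A^\nu,[n]^\mu]$, and you must rule out all but one of them from extending through $\hat 1$ --- nothing is automatically ``inherited.'' The paper's proof makes exactly this point, observing that only the chain whose colors are all equal to $1$ (word of labels ending $\cdots\, m^1\,(n+1)^1$) survives.
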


\begin{proof}
We want to show that in every closed interval of $\widehat{\B_{n}^S}$  there is a unique 
increasing chain (from bottom to top), which is also lexicographically first. 
Note that any interval $[A^{\nu},(A\cup B)^{\nu+\mu}]$ in $\B_n^{S}$ is canonically isomorphic to 
an interval $[\hat{0},B^{\mu}]$ in $\B_B^{S}$, where $\B_B$ is the boolean algebra on the set 
$B \subseteq [n]$, and this isomorphism respects the labeling $\bar \lambda$. So we only need to 
care for intervals of the form $[\hat{0},[n]^{\mu}]$ and of the form $[\hat{0},\hat{1}]$.

For Intervals of the form $[\hat{0},[n]^{\mu}]$ there is only one possible increasing 
label word $1^{u_1}2^{u_2}\cdots n^{u_{n}}$ with $u_1\le u_2\le \cdots \le u_n$. This label word is 
lexicographically first and since, by Proposition 
\ref{proposition:bijectionchainscoloredpermutations}, we know that $\sym_{\mu}$ is in bijection 
with 
the set of maximal chains of $[\hat{0},[n]^{\mu}]$ only one chain has this label word, that is  
 \[
  \hat{0}\lessdot \{1\}^{u_1}\lessdot \{1,2\}^{u_1+u_2} \lessdot \cdots \lessdot 
[n]^{\sum_{i=1}^n u_i}.
 \] 

For the interval $[\hat 0,\hat 1]$  an increasing chain $c$ must be of the form $\\c^\prime \cup 
\{\hat 1\}$, where  $\c^\prime$ is the unique increasing chain of
some interval $[\hat{0},[n]^{\mu}]$. All such chains will have last step $n^u(n+1)^1$ and this is 
only increasing when $u=1$, so $\c^\prime$ is the increasing chain in $[\hat{0}, 
[n]^{n\mathbf{e_1}}]$. This unique increasing chain has word of labels $1^12^1\cdots (n+1)^1$ that 
is clearly 
lexicographically first.
\end{proof}

The following theorem links lexicographic shellability with topology.

\begin{theorem}[Bj\"orner and Wachs \cite{BjornerWachs1996}] \label{theorem:elth}
Let $\bar \lambda$ be an EL-labeling  of a bounded   poset $P$. 
Then for all $x<y$ in $P$, 
\begin{enumerate}
\item the open interval $(x,y)$ is homotopy equivalent to a wedge of  spheres, where for each $r \in
\NN$ the number of spheres  of dimension $r$ is the number of ascent-free maximal chains of the
closed interval $[x,y]$ of length $r+2$. 
\item the set
$$\{\bar c \mid c \mbox{ is an ascent-free maximal chain of $[x,y]$ of length } r+2 \}$$
forms a basis for cohomology $\widetilde H^{r}((x,y))$, for all $r$.
\end{enumerate}
 \end{theorem}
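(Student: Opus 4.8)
The final statement is a quoted result of Bj\"orner and Wachs, so here is the route one would follow to establish it. An EL-labeling of $P$ restricts to an EL-labeling of each closed interval, so by replacing $P$ with $[x,y]$ we may assume $P=[\hat 0,\hat 1]$ is bounded with EL-labeling $\bar\lambda$, and the object of study becomes the order complex $\Delta(\bar P)$ of the proper part $\bar P=P\setminus\{\hat 0,\hat 1\}=(x,y)$. The plan is to prove that listing the maximal chains of $P$ in the \emph{lexicographic order} of their label words $\bar\lambda(c)$ (ties among equal words broken arbitrarily) is a (nonpure) shelling order of $\Delta(\bar P)$ in the sense of Bj\"orner--Wachs \cite{BjornerWachs1996}, and then to read off the two claims from their structure theorem for shellable complexes: a shellable complex is homotopy equivalent to a wedge of spheres, with exactly one sphere of dimension $\dim\mathcal R(c)$ for each \emph{homology facet} $c$ (a facet whose restriction $\mathcal R(c)$ equals all of $c$), and the cochains $\{\bar c:c\text{ a homology facet}\}$ descend to a basis of reduced cohomology. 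Everything thus reduces to computing the restriction map of this particular shelling.

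The technical core is the \emph{restriction lemma}: for a maximal chain $c:\hat 0=x_0\lessdot x_1\lessdot\cdots\lessdot x_\ell=\hat 1$, the minimal face of $\bar c=\{x_1,\dots,x_{\ell-1}\}$ contained in no lexicographically earlier maximal chain is
\[
\mathcal R(c)=\bigl\{\,x_i\;:\;1\le i\le\ell-1,\ \bar\lambda(x_{i-1},x_i)\not<\bar\lambda(x_i,x_{i+1})\,\bigr\},
\]
the set of ``descent elements'' of $c$. I would check this elementwise by analyzing the subinterval $[x_{i-1},x_{i+1}]$: any maximal chain containing $\bar c\setminus\{x_i\}$ agrees with $c$ outside $(x_{i-1},x_{i+1})$, so its lexicographic comparison with $c$ is decided inside that subinterval. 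If step $i$ is an ascent, then $x_{i-1}\lessdot x_i\lessdot x_{i+1}$ is increasing, hence the unique lexicographically-first maximal chain of $[x_{i-1},x_{i+1}]$, so every chain containing $\bar c\setminus\{x_i\}$ is either $c$ itself or lexicographically later than $c$; thus $\bar c\setminus\{x_i\}$ lies in no earlier facet and $x_i\notin\mathcal R(c)$. If step $i$ is not an ascent, then $x_{i-1}\lessdot x_i\lessdot x_{i+1}$ is not the increasing chain of $[x_{i-1},x_{i+1}]$, and replacing it by that increasing chain produces a lexicographically earlier facet still containing $\bar c\setminus\{x_i\}$, so $x_i\in\mathcal R(c)$. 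The same ``lex-first along increasing runs'' principle shows that the faces of $c$ contained in no earlier facet form the interval $[\mathcal R(c),\bar c]$ of the face poset, which is exactly the shelling axiom. Making this case analysis airtight --- in particular keeping track of maximal runs of consecutive ascents and of subintervals longer than length two in the non-graded case --- is the step I expect to be the main obstacle, and it is precisely where the EL-axioms are used in full strength.

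Finally I would translate the conclusions back into the language of the statement. A facet $c$ is a homology facet precisely when $\mathcal R(c)=\bar c$, i.e.\ when \emph{every} $i\in\{1,\dots,\ell-1\}$ is a descent element, which is verbatim the statement that $\bar\lambda(c)$ has no ascent, i.e.\ that $c$ is ascent-free. Since a maximal chain of $[x,y]$ of length $r+2$ has $r+1$ interior elements and therefore indexes a face of dimension $r$ in $\Delta((x,y))$, the Bj\"orner--Wachs count gives exactly one sphere of dimension $r$ per ascent-free maximal chain of $[x,y]$ of length $r+2$, which is part~(1); and the same theorem gives that $\{\bar c:c\text{ ascent-free of length }r+2\}$ represents a basis of $\widetilde H^{r}((x,y))$, which is part~(2). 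Alternatively, (2) follows directly from the shelling: filtering the simplicial cochain complex of $\Delta((x,y))$ by the shelling order, each such $\bar c$ is a cocycle because its coboundary is supported on faces absent from the complex, these classes are linearly independent (pair them against the corresponding homology facets) and spanning (the associated graded of the filtration contributes to cohomology only at homology facets).
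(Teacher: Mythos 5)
The paper states this theorem as a black-box citation of Bj\"orner and Wachs \cite{BjornerWachs1996} and supplies no proof of its own, so there is no internal argument to compare against. Your reconstruction follows the standard route from that reference: restrict the EL-labeling to the closed interval, show that lexicographic order on maximal chains is a (nonpure) shelling of $\Delta((x,y))$ with restriction map given by the descent set, identify the homology facets with the ascent-free maximal chains, and read off the wedge-of-spheres homotopy type and the cohomology basis from the Bj\"orner--Wachs structure theorem for shellable complexes. The outline is correct, the translation between ``$\mathcal R(c)=\bar c$'' and ``$c$ is ascent-free'' and between ``chain of length $r+2$'' and ``face of dimension $r$'' is right, and you accurately flag where the remaining work lies (verifying the restriction lemma for all faces, not just codimension-one ones, and handling unequal interval lengths in the nonpure case); none of that signals a gap in the approach, only in the level of detail one would need to write out to give a self-contained proof of the cited result.
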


We would like now to characterize the ascent-free chains 
of the EL-labeling of Theorem 
\ref{theorem:ellabelingposet}. We already know  by 
Proposition 
\ref{proposition:bijectionchainscoloredpermutations} that the 
maximal chains in $\B_n^S$ are in bijection with permutations in $\sym_n^{S}$. Since any maximal 
chain in $\widehat{\B_n^S}$ is of the form $\c^\prime \cup \{\hat{1}\}$ where $\c^\prime$ is a 
maximal chain in $\B_n^S$ then the permutations in $\sym_n^S$ are also in bijection with maximal 
chains in $\widehat{\B_n^S}$. Recall that the set $\ninc_n$ is the set of nonincreasing colored 
permutations. For $\sigma \in \sym_n^S$ denote $\hat{\c}(\sigma):=\c(\sigma)\cup 
\{\hat{1}\}$ and denote by $\overline{\ninc_n^S}$ the set of permutations in $\ninc_n^S$ with 
$\clr(\sigma(n))\neq 1$. We have the following characterization of the ascent-free chains.

\begin{theorem}\label{thm:ascfreeEL}
The set $\{\c(\sigma) \mid \, \sigma \in \ninc_{\mu}\}$ is the set of 
ascent-free maximal chains of $[\hat 0, [n]^\mu]$ and 
the set $\{\hat{\c}(\sigma) \mid \, \sigma \in \overline{\ninc_n^S}\}$ is the set of 
ascent-free maximal chains of $\widehat{\B_n^S}$ in the EL-labeling of Theorem
\ref{theorem:ellabelingposet}.
\end{theorem}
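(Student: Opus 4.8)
The plan is to analyze directly when a maximal chain of $\widehat{\B_n^S}$ (equivalently a chain of $[\hat 0, [n]^\mu]$ after adjoining $\hat 1$) is ascent-free under the labeling $\bar\lambda$ of \eqref{definition:ellabelinghat}. By Proposition~\ref{proposition:bijectionchainscoloredpermutations}, a maximal chain $\c$ of $[\hat 0, [n]^\mu]$ corresponds to a colored permutation $\sigma \in \sym_\mu$, and its word of labels is precisely $\bar\lambda(\c) = \sigma(1)\sigma(2)\cdots\sigma(n)$ read as a word of colored letters $x^i \in [n]\times\PP$. The key observation is that two consecutive labels $\sigma(k) = x^i$ and $\sigma(k+1) = y^j$ form an ascent in the product poset $[n+1]\times\PP$ exactly when $x < y$ \emph{and} $i \le j$ — but this is, word for word, the definition of an ascent at position $k$ in the colored permutation $\sigma$ given in Section~\ref{section:mainresults}. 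Hence $\c(\sigma)$ is ascent-free in $[\hat 0,[n]^\mu]$ if and only if $\sigma$ has no ascents, i.e. $\sigma \in \ninc_\mu$. This proves the first assertion.

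For the second assertion I would handle the extra top edge. A maximal chain of $\widehat{\B_n^S}$ has the form $\hat\c(\sigma) = \c(\sigma) \cup \{\hat 1\}$ for a unique $\sigma \in \sym_n^S$, with label word $\sigma(1)\cdots\sigma(n)\,(n+1)^1$. Such a chain is ascent-free precisely when (a) the prefix $\sigma(1)\cdots\sigma(n)$ is ascent-free, i.e. $\sigma \in \ninc_n^S$, and (b) the final pair $\sigma(n) = x^i$ and $(n+1)^1$ does not form an ascent. Since $x \le n < n+1$ always holds in the first coordinate, condition (b) fails — that is, an ascent is created — exactly when $i \le 1$, i.e. $i = 1$ (colors are positive integers). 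So the last step is \emph{not} an ascent precisely when $\clr(\sigma(n)) \ne 1$, which is the defining condition of $\overline{\ninc_n^S}$. Combining, $\hat\c(\sigma)$ is ascent-free iff $\sigma \in \overline{\ninc_n^S}$, as claimed.

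The remaining care is bookkeeping about which intervals actually occur: strictly speaking Theorem~\ref{thm:ascfreeEL} concerns the \emph{maximal} chains of the two posets in question, so I only need the analysis above for full maximal chains, and the reduction in the proof of Theorem~\ref{theorem:ellabelingposet} (that every interval is isomorphic, respecting $\bar\lambda$, to one of the standard forms) is not even needed here. One should just note explicitly that the first coordinates of the labels along $\c(\sigma)$ are the distinct values $\sigma(1),\dots,\sigma(n)$ and that the comparability relation in $[n+1]\times\PP$ is the product (coordinatewise) order, so that $x^i < y^j$ means $x < y$ and $i \le j$ with at least one strict — but since the $x$-coordinates are all distinct along a maximal chain, $x < y$ is automatic in any genuine comparison, and $x^i < y^j \iff x<y \text{ and } i \le j$, matching the ascent definition verbatim.

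I do not expect a serious obstacle: the statement is essentially a translation between the combinatorial definition of an ascent in a colored permutation and the order relation in the label poset $[n+1]\times\PP$. The one subtlety worth stating carefully is why the extra top label $(n+1)^1$ behaves as it does — namely that it forces color $1$ to be "small" so that a trailing letter of color $1$ always produces an ascent with $\hat 1$, which is exactly why $\overline{\ninc_n^S}$ (rather than all of $\ninc_n^S$) is the right index set; this also dovetails with the computation in the proof of Theorem~\ref{theorem:ellabelingposet} showing the unique increasing chain of $\widehat{\B_n^S}$ ends in $n^1 (n+1)^1$.
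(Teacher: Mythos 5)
Your proof is correct and follows essentially the same approach as the paper: translate the comparability relation in the label poset $[n+1]\times\PP$ into the ascent condition for colored permutations, then handle the additional top edge labeled $(n+1)^1$ by observing that it creates an ascent precisely when the preceding color is $1$. The one thing you add that the paper leaves implicit — that the first coordinates of consecutive labels are automatically distinct along a maximal chain, so the product-order comparison $x^i < y^j$ reduces to $x<y$ and $i\le j$ without needing the "at least one strict" caveat — is a worthwhile clarification but not a different route.
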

\begin{proof}
An ascent in the word of labels of a maximal chain $\c$ is of the form 
$$x^i=\bar\lambda(A^{\nu},(A\cup\{x\})^{\nu+\mathbf{e_i}})<\bar 
\lambda((A\cup\{x\})^{\nu+\mathbf{e_i}},(A\cup\{x,y\})^{\nu+\mathbf{e_i}+\mathbf{e_j}})=y^j,$$
when $x<y$ and $i\le j$.  This 
corresponds exactly to the definition  of an ascent $x^iy^j$ in a colored permutation. Using 
Proposition \ref{proposition:bijectionchainscoloredpermutations} we see that ascent-free chains in 
$[\hat{0},[n]^{\mu}]$ correspond to colored permutations with no ascents. To describe the 
ascent-free chains 
in $\widehat{\B_n^S}$ we now are only missing to check that there is no ascent of the form
$$x^i=\bar
\lambda([n]\setminus\{x\}^{\mu-\mathbf{e_i}},[n]^{\mu})<\bar 
\lambda([n]^{\mu},\hat{1})=(n+1)^1,$$ which will happen exactly in the case where
$i=1$.
\end{proof}

We obtain the following corollaries of Theorems \ref{theorem:ellabelingposet}, \ref{theorem:elth},
\ref{thm:ascfreeEL} and the isomorphism of Theorem \ref{theorem:isomorphism}.

\begin{corollary}
 The poset $\widehat{\B_n^S}$ is Cohen-Macaulay and its order complex 
$\Delta(\B_n^S \setminus \{\hat{0}\})$ has the homotopy type of a wedge of $|\overline{\ninc_n^S}|$
spheres of dimension $(n-1)$. For every $\mu \in \wcomp$ the interval $[\hat{0},[n]^{\mu}]$ is 
Cohen-Macaulay and its order complex $\Delta((\hat{0},[n]^{\mu}))$ has the homotopy type of a wedge 
of $|\ninc_{\mu}|$ spheres of dimension $(n-2)$.
\end{corollary}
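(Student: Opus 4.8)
The plan is to assemble the corollary from Theorems~\ref{theorem:ellabelingposet}, \ref{theorem:elth} and~\ref{thm:ascfreeEL}, which already contain all the substance. First I would dispose of Cohen-Macaulayness. By Theorem~\ref{theorem:ellabelingposet} the poset $\widehat{\B_n^S}$ is EL-shellable, and EL-shellability implies Cohen-Macaulayness (see \cite{Bjorner1980,BjornerWachs1983,Wachs2007}); since $\B_n^w$ is ranked, all its intervals are pure, so $\widehat{\B_n^S}$ is Cohen-Macaulay in the sense recalled in Section~\ref{section:mainresults}. Being Cohen-Macaulay means precisely that $\widetilde H_i((x,y))=0$ for every open interval $(x,y)$ of $\widehat{\B_n^S}$ and every $i<\ell([x,y])-2$. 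Every open interval of the closed interval $[\hat 0,[n]^{\mu}]$ is also an open interval of $\widehat{\B_n^S}$, so the same vanishing holds there; hence $[\hat 0,[n]^{\mu}]$ is itself Cohen-Macaulay.

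Next I would read off the homotopy type of the maximal interval. Apply Theorem~\ref{theorem:elth}(1) to the closed interval $[\hat 0,[n]^{\mu}]$ of $\widehat{\B_n^S}$ with the EL-labeling of Theorem~\ref{theorem:ellabelingposet}. Since $[n]^{\mu}$ sits at rank $n$ of the ranked poset $\B_n^w$, every maximal chain of $[\hat 0,[n]^{\mu}]$ has length $n$; by Theorem~\ref{thm:ascfreeEL} the ascent-free ones are exactly the chains $\c(\sigma)$ with $\sigma\in\ninc_{\mu}$, of which there are $|\ninc_{\mu}|$, all of length $n=(n-2)+2$. Theorem~\ref{theorem:elth}(1) then gives that $\Delta((\hat 0,[n]^{\mu}))$ is homotopy equivalent to a wedge of $|\ninc_{\mu}|$ spheres of dimension $n-2$.

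For the remaining claim I would apply Theorem~\ref{theorem:elth}(1) to the full bounded poset $\widehat{\B_n^S}=[\hat 0,\hat 1]$. A maximal chain of $\widehat{\B_n^S}$ has the form $\hat\c(\sigma)=\c(\sigma)\cup\{\hat 1\}$ for some $\sigma\in\sym_n^S$, hence has length $n+1$; by Theorem~\ref{thm:ascfreeEL} the ascent-free ones are exactly $\hat\c(\sigma)$ with $\sigma\in\overline{\ninc_n^S}$, of which there are $|\overline{\ninc_n^S}|$, all of length $n+1=(n-1)+2$. Therefore the open interval $(\hat 0,\hat 1)$ is homotopy equivalent to a wedge of $|\overline{\ninc_n^S}|$ spheres of dimension $n-1$; and since $(\hat 0,\hat 1)=\widehat{\B_n^S}\setminus\{\hat 0,\hat 1\}=\B_n^S\setminus\{\hat 0\}$, the complex $\Delta(\B_n^S\setminus\{\hat 0\})$ has the asserted homotopy type.

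There is no genuine obstacle in this argument: the hard work is all in Theorems~\ref{theorem:ellabelingposet} and~\ref{thm:ascfreeEL}. The only points needing care are the translation between chain length and face dimension (a length-$\ell$ maximal chain of a closed interval contributes an $(\ell-2)$-sphere to the order complex of the corresponding open interval, which is why lengths $n$ and $n+1$ produce dimensions $n-2$ and $n-1$) and the elementary identification of $(\hat 0,\hat 1)$ inside $\widehat{\B_n^S}$ with $\B_n^S\setminus\{\hat 0\}$.
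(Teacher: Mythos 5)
Your proof is correct and follows the same route the paper intends: Cohen-Macaulayness comes from the EL-shellability of Theorem~\ref{theorem:ellabelingposet} (and passes to the closed intervals $[\hat 0,[n]^{\mu}]$ since their open subintervals are open subintervals of $\widehat{\B_n^S}$), while the homotopy-type and sphere counts follow from Theorem~\ref{theorem:elth}(1) together with the ascent-free characterization of Theorem~\ref{thm:ascfreeEL}, applied to the closed interval $[\hat 0,[n]^{\mu}]$ of length $n$ and to the full bounded poset $\widehat{\B_n^S}$ of length $n+1$, with the elementary identification $\widehat{\B_n^S}\setminus\{\hat 0,\hat 1\}=\B_n^S\setminus\{\hat 0\}$. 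You have also correctly tracked the shift between chain length and sphere dimension, so nothing is missing.
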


\begin{corollary}
 The set $\{\c(\sigma)\setminus\{\hat{0}\}\,\mid\, \sigma \in \overline{\ninc_n^S}\}$ is a basis 
for $\widetilde H^{n-1}(\B_n^S \setminus \{\hat{0}\})$. For  every $\mu \in \wcomp$ the set  
$\{\bar\c(\sigma)\,\mid\, \sigma \in \ninc_{\mu}\}$ is a basis for $\widetilde 
H^{n-2}((\hat{0},[n]^{\mu}))$.
\end{corollary}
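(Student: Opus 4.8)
The plan is to derive both bases as immediate instances of Theorem~\ref{theorem:elth}(2), using the EL-labeling $\bar\lambda$ of Theorem~\ref{theorem:ellabelingposet} together with the identification of its ascent-free maximal chains in Theorem~\ref{thm:ascfreeEL}; the only thing to supply by hand is the elementary count of chain lengths.

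First I would observe that $\widehat{\B_n^S}$ is bounded and that all of its maximal chains have length $n+1$: by Proposition~\ref{proposition:bijectionchainscoloredpermutations} each cover relation below $\hat 1$ adjoins exactly one new element of $[n]$, so after $n$ steps one reaches a maximal element $[n]^\mu$ of $\B_n^S$ (with $\supp(\mu)\subseteq S$), and a single further cover reaches $\hat 1$; likewise every maximal chain of the interval $[\hat 0,[n]^\mu]$ has length $n$. Since $\B_n^S\setminus\{\hat 0\}$ is precisely the open interval $(\hat 0,\hat 1)$ of $\widehat{\B_n^S}$, Theorem~\ref{theorem:elth}(2) applied to $[\hat 0,\hat 1]$ says that for each $r$ the chains $\bar\c$ with $\c$ an ascent-free maximal chain of $[\hat 0,\hat 1]$ of length $r+2$ form a basis of $\widetilde H^{r}(\B_n^S\setminus\{\hat 0\})$; since every maximal chain has length $n+1$, only $r=n-1$ contributes. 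Applying the same theorem to the interval $[\hat 0,[n]^\mu]$ gives, for each $r$, a basis of $\widetilde H^{r}((\hat 0,[n]^\mu))$ indexed by ascent-free maximal chains of length $r+2$, and here only $r=n-2$ contributes.

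It then remains to substitute the description of Theorem~\ref{thm:ascfreeEL}. For $\widehat{\B_n^S}$ the ascent-free maximal chains are the $\hat\c(\sigma)=\c(\sigma)\cup\{\hat 1\}$ for $\sigma\in\overline{\ninc_n^S}$, and deleting $\hat 0$ and $\hat 1$ turns $\hat\c(\sigma)$ into $\c(\sigma)\setminus\{\hat 0\}$; this is exactly the first asserted basis. For $[\hat 0,[n]^\mu]$ the ascent-free maximal chains are the $\c(\sigma)$ for $\sigma\in\ninc_\mu$, and deleting $\hat 0$ and the top element $[n]^\mu$ produces the chains $\bar\c(\sigma)$ in $(\hat 0,[n]^\mu)$ already named in Section~\ref{section:isomorphism}; this is the second asserted basis. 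For the second statement one could instead transport the basis $\{\wedge(\sigma)\mid\sigma\in\ninc_\mu\}$ of $\L(\mu)$ from Theorem~\ref{theorem:dimension} through the isomorphism $\varphi$ of Theorem~\ref{theorem:explicitisomorphism}, since $\varphi(\wedge(\sigma))=\bar\c(\sigma)$.

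Since everything is extracted from previously established results, there is no genuine obstacle; the only points requiring care are matching the bar/hat conventions for chains, confirming the purity statements so that a single cohomological degree survives, and noting that Theorem~\ref{theorem:elth} still applies when $S$ is infinite because $\widehat{\B_n^S}$ remains bounded of finite length.
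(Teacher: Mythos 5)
Your proof is correct and matches the paper's intent exactly: the paper states this corollary as an immediate consequence of Theorems~\ref{theorem:ellabelingposet}, \ref{theorem:elth}, and~\ref{thm:ascfreeEL} without spelling out the details, and your argument fills in precisely the routine steps (identifying $\B_n^S\setminus\{\hat 0\}$ with the open interval $(\hat 0,\hat 1)$ of $\widehat{\B_n^S}$, computing the maximal chain lengths $n+1$ and $n$ so that only degrees $n-1$ and $n-2$ survive in Theorem~\ref{theorem:elth}(2), and substituting the ascent-free chain descriptions from Theorem~\ref{thm:ascfreeEL}). One small caveat: the alternative route you mention at the end — transporting the basis of $\L(\mu)$ from Theorem~\ref{theorem:dimension} through $\varphi$ — is logically backwards in the paper's development, since Theorem~\ref{theorem:dimension} is proved later as Corollary~\ref{corollary:dimensionlmu}, which itself depends on this corollary; it's fine as an observation but cannot serve as an independent proof here.
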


\begin{corollary}\label{corollary:dimensionlmu}
For  every $\mu \in \wcomp$ the set  
$\{\wedge(\sigma)\,\mid\, \sigma \in \ninc_{\mu}\}$ is a basis for $\L(\mu)$. Consequently,
$\dim \L(\mu)=|\ninc_{\mu}|$.
\end{corollary}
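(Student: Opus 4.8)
The plan is to deduce Corollary~\ref{corollary:dimensionlmu} directly from the three results just established, without any new computation. First I would invoke the $\sym_n$-module isomorphism $\varphi:\L(\mu)\to\widetilde H^{n-2}((\hat 0,[n]^\mu))$ of Theorem~\ref{theorem:explicitisomorphism}; this reduces every statement about $\L(\mu)$ to the corresponding statement about the top cohomology of the open maximal interval $(\hat 0,[n]^\mu)$ of $\B_n^w$. In particular, $\varphi$ carries the wedged permutation $\wedge(\sigma)$ to the chain class $\bar\c(\sigma)$ for each $\sigma\in\sym_\mu$, so it suffices to show that $\{\bar\c(\sigma)\mid\sigma\in\ninc_\mu\}$ is a basis of $\widetilde H^{n-2}((\hat 0,[n]^\mu))$.

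Next I would apply the EL-shellability machinery. By Theorem~\ref{theorem:ellabelingposet} the labeling $\bar\lambda$ of (\ref{definition:ellabelinghat}) is an EL-labeling of $\widehat{\B_n^w}$, hence its restriction to the closed interval $[\hat 0,[n]^\mu]$ is an EL-labeling of that interval. Theorem~\ref{theorem:elth}(2) of Bj\"orner and Wachs then says that the classes $\{\bar c\mid c\text{ an ascent-free maximal chain of }[\hat 0,[n]^\mu]\text{ of length }r+2\}$ form a basis of $\widetilde H^{r}((\hat 0,[n]^\mu))$ for every $r$. Since $[\hat 0,[n]^\mu]$ has length $n$, the only contribution in top degree $r=n-2$ comes from the maximal chains themselves (length $n$), and Theorem~\ref{thm:ascfreeEL} identifies the ascent-free maximal chains of $[\hat 0,[n]^\mu]$ as exactly $\{\c(\sigma)\mid\sigma\in\ninc_\mu\}$. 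Passing to the proper part via $\c\mapsto\bar\c$ (the bijection noted after Proposition~\ref{proposition:bijectionchainscoloredpermutations}), we conclude that $\{\bar\c(\sigma)\mid\sigma\in\ninc_\mu\}$ is a basis for $\widetilde H^{n-2}((\hat 0,[n]^\mu))$.

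Finally I would transport this basis back through $\varphi^{-1}$: since $\varphi$ is an isomorphism of vector spaces sending $\wedge(\sigma)$ to $\bar\c(\sigma)$, the preimage $\{\wedge(\sigma)\mid\sigma\in\ninc_\mu\}$ is a basis of $\L(\mu)$, and counting gives $\dim\L(\mu)=|\ninc_\mu|$. Honestly, there is no real obstacle here — the corollary is a bookkeeping consequence of Theorems~\ref{theorem:isomorphism} (equivalently \ref{theorem:explicitisomorphism}), \ref{theorem:ellabelingposet}, \ref{theorem:elth}, and \ref{thm:ascfreeEL}. The only point that needs a word of care is the degree bookkeeping: one must note that a length-$(r+2)$ closed interval $[\hat 0,[n]^\mu]$ forces $r=n-2$ for \emph{maximal} chains and that no shorter ascent-free maximal chain of $[\hat 0,[n]^\mu]$ exists (the interval is pure of length $n$), so that Theorem~\ref{theorem:elth}(2) with $r=n-2$ really does pick out the maximal chains indexed by $\ninc_\mu$ and nothing else.
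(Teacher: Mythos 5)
Your proposal is correct and follows essentially the same route as the paper: the paper states this corollary as an immediate consequence of Theorems~\ref{theorem:ellabelingposet}, \ref{theorem:elth}, \ref{thm:ascfreeEL} and the isomorphism of Theorem~\ref{theorem:isomorphism}, passing through the intermediate corollary that $\{\bar\c(\sigma)\mid\sigma\in\ninc_\mu\}$ is a basis for $\widetilde H^{n-2}((\hat 0,[n]^\mu))$. Your degree bookkeeping and transport via $\varphi$ are exactly what the paper leaves implicit.
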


\begin{proof}[Proof of Theorem \ref{theorem:dimensionstype}]
 If $\sigma \in \sym_n^{\PP}$ is a colored permutation, denote by $\tilde\sigma \in \sym_n$ the 
underlying uncolored permutation associated to $\sigma$. For example if 
$\sigma={\color{blue}2^1}{\color{orange}1^4}{\color{red}3^2}$ then $\tilde\sigma=213$. Note that 
the type $\lambda(\tau)$ defined in Section~\ref{section:mainresults} for a permutation $\tau \in 
\sym_n$ is closely related to the coloring condition in $\ninc_n$. If $\sigma \in  \ninc_n$ such 
that $\tilde\sigma=\tau$ the colors in each part of the partition $\pi(\tau)$ need to strictly 
decrease from left to right. If $B$ is a block of $\pi(\tau)$ of size $|B|=i$ then the elementary 
symmetric function $e_i(\xx)$ enumerates all the possible ways of coloring the letters in $B$. 
Then the contribution to the generating function (\ref{definition:euleriansymmetric}) of all the 
nonincreasing colored permutations with underlying uncolored permutation $\tau$ is 
$e_{\lambda(\tau)}(\xx)$. By Corollary \ref{corollary:dimensionlmu} and the comments above we have
\begin{align*}
 \sum_{\mu \in \wcomp_n}\dim \L(\mu)\xx^{\mu}&=\sum_{\mu \in \wcomp_n}|\ninc_{\mu}|\xx^{\mu}\\
 &=\sum_{\sigma \in \ninc_n}\xx^{\mu(\sigma)}\\
 &=\sum_{\tau \in \sym_n}\sum_{\substack{\sigma \in \ninc_n\\\tilde\sigma=\tau}}\xx^{\mu(\sigma)}\\
  &=\sum_{\tau \in \sym_n}e_{\lambda(\tau)}(\xx)\qedhere
\end{align*}
\end{proof}

\section{The Frobenius characteristic of $\L(\mu)$}\label{section:frobeniuscharacteristic}
To prove Theorem \ref{theorem:representationmultiplicativeinverse} we will use a technique 
introduced by Sundaram \cite{Sundaram1994} (see also \cite{Wachs1999}) to compute 
group representations on the (co)homology of Cohen-Macaulay posets. This technique uses the concept 
of Whitney (co)homology that was introduced by Baclawski in \cite{Baclawski1975}.
For information not presented here about symmetric functions  and the representation theory of the 
symmetric group see \cite{Macdonald1995}, \cite{Sagan2001}, \cite{JamesKerber1981} and 
\cite[Chapter 7]{Stanley1999}.

\subsection{A multiplicative inverse formula}
In $\mathnormal{Rep}_{\sym}$, the product is called the \emph{induction 
product} and is defined for an $\sym_m$-module $V$ and an $\sym_n$-module $W$ by 
$$V\circ W:=(V \otimes W) \uparrow_{\sym_m \times \sym_n}^{\sym{m+n}},$$
where $\uparrow_*^*$ denotes induction.
We will need the following proposition that is one of the main properties that makes the 
Frobenius characteristic map $\ch$ 
 a ring isomorphism.

\begin{proposition}[\cite{Macdonald1995}]\label{proposition:tensorch}
Let $V$ be an $\sym_m$-module and $W$ an $\sym_n$-module. Then
\begin{align*}
 \ch \left ( (V \otimes W) \uparrow_{\sym_m \times \sym_n}^{\sym_{m+n}} \right )&= \ch V\ch W.
\end{align*}
\end{proposition}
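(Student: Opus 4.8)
The plan is to work directly from the definition of the Frobenius characteristic as a character-weighted sum of power-sum symmetric functions. Recall that for an $\sym_N$-module $U$ with character $\chi^U$ one has
$$\ch U = \frac{1}{N!}\sum_{\sigma\in\sym_N}\chi^U(\sigma)\,p_{\rho(\sigma)}(\yy),$$
where $\rho(\sigma)$ denotes the cycle type of $\sigma$ and $p_{\rho}$ the associated power-sum symmetric function. Applying this with $N=m+n$ and $U=(V\otimes W)\uparrow_{\sym_m\times\sym_n}^{\sym_{m+n}}$ reduces the claim to an identity among explicit sums over symmetric groups, and the only extra input needed is the classical formula for the character of an induced representation.

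First I would substitute the induced character formula
$$\chi^{(V\otimes W)\uparrow}(\sigma)=\frac{1}{m!\,n!}\sum_{\substack{g\in\sym_{m+n}\\ g^{-1}\sigma g\in\sym_m\times\sym_n}}\chi^{V\otimes W}(g^{-1}\sigma g)$$
into the defining sum for $\ch$, producing a double sum over $\sigma\in\sym_{m+n}$ and over the admissible $g$. The key manipulation is a change of variables: since conjugation preserves cycle type we have $p_{\rho(\sigma)}=p_{\rho(g^{-1}\sigma g)}$, so after interchanging the order of summation and writing $\tau=g^{-1}\sigma g$, for each fixed $g\in\sym_{m+n}$ the index $\tau$ runs exactly once over $\sym_m\times\sym_n$. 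The sum over $g$ then contributes only a factor of $(m+n)!$, cancelling the prefactor, and one is left with
$$\ch\bigl((V\otimes W)\uparrow\bigr)=\frac{1}{m!\,n!}\sum_{\tau\in\sym_m\times\sym_n}\chi^{V\otimes W}(\tau)\,p_{\rho(\tau)}(\yy).$$

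Finally I would exploit that $\sym_m\times\sym_n$ is a direct product: writing $\tau=(\alpha,\beta)$ with $\alpha\in\sym_m$ and $\beta\in\sym_n$, one has $\chi^{V\otimes W}(\alpha,\beta)=\chi^V(\alpha)\,\chi^W(\beta)$ and $\rho(\tau)$ is the multiset union of the cycle types of $\alpha$ and $\beta$, hence $p_{\rho(\tau)}=p_{\rho(\alpha)}\,p_{\rho(\beta)}$. The double sum then factors as the product of the defining sums for $\ch V$ and $\ch W$, which gives $\ch V\,\ch W$. I do not anticipate a real obstacle; the single point requiring care is the bookkeeping in the conjugation change of variables, namely verifying that for fixed $g$ the map $\sigma\mapsto g^{-1}\sigma g$ restricts to a bijection from $\{\sigma\in\sym_{m+n}\mid g^{-1}\sigma g\in\sym_m\times\sym_n\}$ onto $\sym_m\times\sym_n$, after which everything is a formal rearrangement. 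One could alternatively simply cite this as the standard fact that $\ch$ carries the induction product to multiplication of symmetric functions (see \cite{Macdonald1995} or \cite[Chapter 7]{Stanley1999}), but the computation above is short enough to record in full.
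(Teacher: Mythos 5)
Your argument is correct and complete: the substitution of the induced-character formula, the change of variables $\tau = g^{-1}\sigma g$ using that conjugation preserves cycle type, and the final factoring over $\sym_m\times\sym_n$ is exactly the standard textbook proof. The paper itself does not prove this proposition but simply cites it to \cite{Macdonald1995} as a known property of the Frobenius characteristic, so there is no proof in the paper to compare against; your proof is the one that source would give.
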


\emph{Whitney cohomology} (over the field ${\bf k}$) of a poset $P$ with a minimum element $\hat 
0$ 
can be
defined for each integer $r$ as follows:
\begin{align*}
 WH^r(P):= \bigoplus_{x \in P} \widetilde H^{r-2}((\hat 0, x);{\bf k}).
\end{align*}
In the case of a pure Cohen-Macaulay poset this formula becomes
\begin{align}\label{equation:defwhitneyhomology}
 WH^r(P)= \bigoplus_{\substack{x \in P\\ \rho(x)=r}} \widetilde H^{r-2}((\hat 0, x);{\bf k}).
\end{align}
If a group $G$ of automorphisms acts on the poset $P$, this action induces a 
representation of $G$  
on $WH^r(P)$ for every $r$. From equation 
(\ref{equation:defwhitneyhomology}), when $P$ is pure and Cohen-Macaulay, $WH^r(P)$ breaks into the 
direct 
sum of 
$G$-modules
\begin{align}\label{equation:defwhitneyhomologyinduced}
 WH^r(P)\cong_{G} \bigoplus_{\substack{x \in P/\sim\\ \rho(x)=r}} \widetilde H^{r-2}((\hat 0, 
x);{\bf 
k}) \bigl \uparrow _{G_x}^G,
\end{align}
where  $P/\sim$ is a set of orbit representatives and $G_x$ the stabilizer of $x$. 
The following result of Sundaram \cite{Sundaram1994} can be used to compute characters of  
$G$-representations on the (co)homology of pure $G$-posets.
\begin{lemma}[\cite{Sundaram1994} Lemma 1.1]\label{lemma:sundaramsum}
Let $P$ be a bounded poset of length $\ell\ge 1$ and let $G$ be a group of automorphisms of $P$.
Then the following isomorphism of virtual $G$-modules holds
\begin{align*}
 \bigoplus_{i=0}^{\ell}(-1)^{\ell-i}WH^i(P)\cong_{G}0.
\end{align*}
\end{lemma}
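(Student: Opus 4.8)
The statement to be proved is Sundaram's Lemma~\ref{lemma:sundaramsum}: for a bounded poset $P$ of length $\ell \ge 1$ with a group $G$ of automorphisms, $\bigoplus_{i=0}^{\ell}(-1)^{\ell-i}WH^i(P)\cong_G 0$ as virtual $G$-modules. The plan is to exhibit an explicit $G$-equivariant acyclic (or contractible) complex whose Euler characteristic, computed term by term, yields exactly the alternating sum of Whitney cohomology modules. The natural candidate is the augmented (co)chain complex of the order complex of the \emph{open} interval $(\hat 0,\hat 1)$, or equivalently a complex built from chains of $P$ passing through a fixed bottom $\hat 0$ and top $\hat 1$; since $P$ is bounded it has a cone point, so $\Delta(P)$ (with $\hat 0$ or $\hat 1$ adjoined) is contractible and its reduced homology vanishes in every degree, giving the ``$\cong_G 0$'' on the nose at the level of virtual modules.

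First I would set up the right chain complex. Consider the chains of $\overline{P} = P \setminus \{\hat 0\}$ (or work with $P$ itself and use that $\hat 1$ is a cone point). For each such chain $\hat 0 < x_1 < \cdots < x_k$, or rather indexing by its top element, one gets a contribution that reorganizes into $\bigoplus_x \widetilde C_{*}((\hat 0, x))$. The key combinatorial identity is that the (augmented) simplicial chain complex of $\Delta(\overline P)$ decomposes, as a complex of $G$-modules, by the maximal element of each chain: a chain with top element $x$ of rank $r$ restricts to a chain in the open interval $(\hat 0, x)$ together with the vertex $x$, so summing the Euler characteristics over all $x$ at a fixed rank recovers $WH^r(P)$ up to the shift by $2$ in the definition $WH^r(P) = \bigoplus_{x}\widetilde H^{r-2}((\hat 0,x))$. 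Because $\hat 1$ is a cone point of $\Delta(P)$, the total complex is $G$-equivariantly contractible, so its reduced Euler characteristic is $0$ in the Grothendieck group $\mathnormal{Rep}_{\sym}$ (more precisely in the Grothendieck group of $G$-modules), and matching the signs $(-1)^{\ell-i}$ to the homological degrees gives the claim. One should be a little careful that ``virtual $G$-module'' means we are working in the Grothendieck group, so exactness of the complex — not merely equality of dimensions — is what licenses passing from the alternating sum of chain groups to the alternating sum of (co)homology groups; this is the standard Euler characteristic argument applied $G$-equivariantly, valid since $\kk$ has characteristic $\ne 2$ (indeed any characteristic works for the virtual identity).

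The main obstacle, and the step deserving the most care, is bookkeeping the index shift and the role of $\hat 0$ versus $\hat 1$ so that the signs $(-1)^{\ell-i}$ come out correctly and the degenerate low-rank terms ($r=0$, the empty chain contributing $\widetilde H^{-2}$, i.e.\ $\kk$ for $x = \hat 0$) are handled consistently with the convention $WH^0(P) = \widetilde H^{-2}((\hat 0,\hat 0)) = \kk$ (the trivial $G$-module). A clean way to manage this is to first prove the identity purely formally in the Grothendieck group using the decomposition of the order complex of $P$ by top element of each chain, invoking only that $\widetilde H_*$ is additive on the relevant short exact sequences and that $\Delta(P)$ is $G$-contractible; I would also double-check the boundary cases $\ell = 1$ (where $P = \{\hat 0 < \hat 1\}$ and the identity reads $WH^1 - WH^0 = 0$, i.e.\ $\widetilde H^{-1}((\hat 0,\hat 1)) \cong \kk$ since $(\hat 0,\hat 1)$ is empty) to make sure the sign conventions are internally consistent. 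Since this is precisely Lemma~1.1 of \cite{Sundaram1994}, I would present the argument concisely and refer the reader there for the full verification of the signs, or alternatively derive it from the $G$-equivariant version of the Euler--Poincaré relation for the contractible complex $\Delta(P \cup \{\hat 1\})$.
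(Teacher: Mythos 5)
The paper does not prove this lemma itself; it cites Sundaram's Lemma~1.1 as a black box, so there is no internal proof to compare against. Your sketch is essentially Sundaram's own argument: group the chains of $\Delta(P\setminus\{\hat 0\})$ (or of the proper part) by their top element $x$, identify the chains with top $x$ with faces of $\Delta((\hat 0,x))$ shifted by one, and take $G$-equivariant Euler characteristics in the Grothendieck ring, using that $\Delta(P\setminus\{\hat 0\})$ is a $G$-equivariant cone at $\hat 1$; the $x=\hat 0$ term accounts for the empty chain and matches the convention $WH^0(P)=\widetilde H^{-2}((\hat 0,\hat 0))\cong\kk$. The only imprecisions are cosmetic — the grouping by top element is a decomposition of the graded chain groups, not of the complex (the differential lowers the top), so the step to Euler characteristics is a Grothendieck-group computation rather than a direct sum of subcomplexes, and the remark that characteristic $\ne 2$ matters is unnecessary here — but these do not affect the validity of the argument.
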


We know by Theorem \ref{theorem:ellabelingposet} that for every $\mu \in \wcomp_n$ the poset 
$[\hat{0},[n]^{\mu}]$ is Cohen-Macaulay. We will apply Lemma \ref{lemma:sundaramsum} to 
$[\hat{0},[n]^{\mu}]^{*}$, that is the dual poset of $[\hat{0},[n]^{\mu}]$.

Now we specify a set of orbit representatives for the action of $\sym_n$ on 
$[\hat{0},[n]^{\mu}]^{*}$.
Denote by $\alpha_{\eta}$, the weighted subset $[|\eta|]^{\eta}$ of $[n]$ where $\eta \in \wcomp$ 
is such that $|\eta|\le n$.
We claim that 
$$\{\alpha_{\eta} \mid \eta \in \wcomp \text{ and }  \eta \le \mu \}$$ 
is such a set of orbit representatives. 
To see this, note that any weighted subset $\beta \in 
[\hat{0},[n]^{\mu}]^{*}$ can be obtained as $\beta=\sigma \alpha_{\eta}$ for suitable $\eta \in 
\wcomp$ such that $\eta \le \mu$.
It is also clear that $\alpha_{\eta} \ne \sigma 
\alpha_{\eta^{\prime}}$ for $\eta 
\ne \eta^{\prime}$ and for every $\sigma \in \sym_n$.
The weighted subset $\alpha_{\eta}$ has  the Young subgroup
$\sym_{|\eta|}\times \sym_{n-|\eta|}$ as stabilizer.

Applying equation (\ref{equation:defwhitneyhomologyinduced}) to $[\hat{0},[n]^{\mu}]^{*}$ we obtain,

\begin{align}\label{equation:whitneydualinterval}
  WH^r([\hat{0},[n]^{\mu}]^{*})\cong_{\sym_n}\bigoplus_{\substack{\eta \in \wcomp \\\eta \le \mu \\ 
|\eta|=n-r}} 
\widetilde 
{H}^{r-2}((\alpha_{\eta},[n]^{\mu}))\bigl 
\uparrow_{\sym_{|\eta|}\times \sym_{n-|\eta|}}^{\sym_{n}}.
\end{align}

Note that when  $r=1$  the open interval $(\alpha_{\eta},[n]^{\mu})$  is the empty poset, 
hence $\widetilde {H}^{r-3}((\alpha_{\eta},[n]^{\mu}))$ is isomorphic to the trivial 
representation of $\sym_{n-1}\times \sym_{1}$.  When $r=0$, we have that
$\alpha_{\eta}=[n]^\mu$ and in this case we use the convention that $\widetilde 
{H}^{r-3}((\alpha_{\eta},[n]^{\mu}))$ is isomorphic to the trivial representation of 
$\sym_n$.

We apply Lemma \ref{lemma:sundaramsum} together with equation 
(\ref{equation:whitneydualinterval}) to obtain the following result.

\begin{lemma} \label{lemma:equivariantrecursivity}For $n\ge 0$ and $\mu \in \wcomp_{n}$ we have 
the following $\sym_n$-module isomorphism

\begin{align}\label{equation:equivariantrecursivity}
  \mathbf{1}_{\sym_n}\delta_{n,0} \cong_{\sym_n}\bigoplus_{\substack{\eta \in \wcomp \\ \eta \le
\mu}} (-1)^{|\eta|}\widetilde 
{H}^{n-|\eta|-2}((\alpha_{\eta},[n]^{\mu}))\bigl 
\uparrow_{\sym_{|\eta|}\times \sym_{n-|\eta|}}^{\sym_{n}},
\end{align}
where $\mathbf{1}_{\sym_n}$ denotes the trivial representation of $\sym_n$ and $\delta_{n,0}$  
the Kronecker delta function.

\end{lemma}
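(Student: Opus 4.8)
The plan is to derive Lemma~\ref{lemma:equivariantrecursivity} by applying Sundaram's alternating-sum vanishing (Lemma~\ref{lemma:sundaramsum}) to the dual poset $P:=[\hat{0},[n]^{\mu}]^{*}$ and then rewriting the Whitney cohomology terms using the orbit decomposition already set up in equation~(\ref{equation:whitneydualinterval}). First I would check the hypotheses of Lemma~\ref{lemma:sundaramsum}: by Theorem~\ref{theorem:ellabelingposet} the interval $[\hat 0,[n]^{\mu}]$ is Cohen-Macaulay, hence so is its dual $P$, and it is a bounded poset of length $\ell=n$ (the rank function on $\B_n^w$ restricted to this interval goes from $0$ up to $n$), so for $n\ge 1$ we are in the situation of the lemma. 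The minimum of $P$ is $[n]^{\mu}$ and its maximum is $\hat 0$; crucially, for $x\in P$ the interval $(\hat 0_P,x)$ appearing in the definition of $WH^r$ is exactly the interval $(\,\cdot\,,[n]^{\mu})$ computed downward in the original poset, which is why the orbit representatives $\alpha_\eta$ with $\eta\le\mu$ and stabilizers $\sym_{|\eta|}\times\sym_{n-|\eta|}$ are the correct data.

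Next I would substitute. Lemma~\ref{lemma:sundaramsum} gives $\bigoplus_{i=0}^{n}(-1)^{n-i}WH^i(P)\cong_{\sym_n}0$ as virtual $\sym_n$-modules. Plugging in equation~(\ref{equation:whitneydualinterval}), the $i$-th term is a sum over $\eta\le\mu$ with $|\eta|=n-i$ of $\widetilde H^{i-2}((\alpha_\eta,[n]^{\mu}))\uparrow_{\sym_{|\eta|}\times\sym_{n-|\eta|}}^{\sym_n}$. Re-indexing the double sum by $\eta$ alone (so $i=n-|\eta|$ and the sign $(-1)^{n-i}$ becomes $(-1)^{|\eta|}$), the vanishing statement becomes
\[
0\cong_{\sym_n}\bigoplus_{\substack{\eta\in\wcomp\\ \eta\le\mu}}(-1)^{|\eta|}\widetilde H^{n-|\eta|-2}((\alpha_\eta,[n]^{\mu}))\uparrow_{\sym_{|\eta|}\times\sym_{n-|\eta|}}^{\sym_n}
\]
for $n\ge 1$. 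This is precisely the asserted identity with $\delta_{n,0}=0$. I would then handle $n=0$ separately: here $\wcomp_0=\{\mathbf 0\}$, the interval $[\hat 0,[0]^{\mathbf 0}]$ is a single point, the only $\eta\le\mathbf 0$ is $\eta=\mathbf 0$ itself, and by the stated convention $\widetilde H^{-3}((\alpha_{\mathbf 0},[0]^{\mathbf 0}))$ is the trivial $\sym_0$-representation, so the right-hand side is $(-1)^0\mathbf 1_{\sym_0}=\mathbf 1_{\sym_0}$, matching $\mathbf 1_{\sym_n}\delta_{n,0}$.

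The main obstacle, and the point deserving the most care, is the bookkeeping of the two degenerate strata $r=0,1$ (equivalently $|\eta|=n,n-1$) so that the formal conventions on $\widetilde H^{-2}$ and $\widetilde H^{-3}$ are consistent with Sundaram's lemma and with the reduced (co)homology of the empty poset and the one-point poset. The excerpt already records the needed conventions right after equation~(\ref{equation:whitneydualinterval}) — that $(\alpha_\eta,[n]^{\mu})$ is the empty poset when $r=1$ and that we interpret the corresponding cohomology as the trivial $\sym_{n-1}\times\sym_1$-module, and likewise the $r=0$ case — so the argument reduces to verifying that these conventions are exactly the ones under which Lemma~\ref{lemma:sundaramsum} and the Cohen-Macaulay form~(\ref{equation:defwhitneyhomology}) of Whitney cohomology hold. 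A secondary small point is confirming that the $\sym_n$-action on $P$ fixes $\hat 0_P=[n]^{\mu}$ and $\hat 1_P=\hat 0$ so that $\sym_n$ genuinely acts by automorphisms of the bounded poset $P$; this is immediate from the definition $\tau B^{\mu}=(\tau B)^{\mu}$, since $\tau[n]=[n]$ and $\tau\varnothing=\varnothing$.
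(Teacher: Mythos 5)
Your proof is correct and follows the same route as the paper: apply Sundaram's alternating-sum vanishing (Lemma~\ref{lemma:sundaramsum}) to the dual interval $[\hat{0},[n]^{\mu}]^{*}$, substitute the Whitney-cohomology decomposition~(\ref{equation:whitneydualinterval}), and re-index the double sum by $\eta$ so that the sign $(-1)^{\ell-i}$ becomes $(-1)^{|\eta|}$; your explicit treatment of the $n=0$ base case and the degenerate strata $r=0,1$ simply spells out details the paper leaves implicit. One small slip worth flagging: for $n=0$ the exponent appearing in the statement is $n-|\eta|-2=-2$, not $-3$; the paper's ``$\widetilde{H}^{r-3}$'' in the sentences following equation~(\ref{equation:whitneydualinterval}) is evidently a typo for $\widetilde{H}^{r-2}$, and you have propagated it, but since the convention returns the trivial module in either reading the conclusion is unaffected.
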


\begin{lemma}\label{lemma:isomodules}For all $n \ge 0$ and $\eta,\nu \in \wcomp$ with 
$|\nu|+|\eta|=n$, the following $\sym_{|\eta|}\times \sym_{|\nu|}$-module isomorphism 
holds:
\begin{align*}
\widetilde{H}^{|\nu|-2}((\alpha_{\eta},[n]^{\nu+\eta}))
\cong_{\sym_{|\eta|}\times \sym_{|\nu|}}
\mathbf{1}_{\sym_{|\eta|}}\otimes 
\widetilde{H}^{|\nu|-2}((\hat{0},[|\nu|]^{\nu})).
\end{align*}
\end{lemma}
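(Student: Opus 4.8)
The plan is to exhibit an explicit poset isomorphism between the open interval $(\alpha_\eta, [n]^{\nu+\eta})$ and the open interval $(\hat 0, [|\nu|]^\nu)$ that is equivariant for the appropriate group actions, and then to translate this into the claimed cohomology statement. First I would observe that any element $\beta$ with $\alpha_\eta < \beta \le [n]^{\nu+\eta}$ in $[\hat 0, [n]^{\nu+\eta}]^*$ has the form $\beta = ([|\eta|] \cup C)^{\eta + \rho}$ for some $C \subseteq [n] \setminus [|\eta|]$ and some weak composition $\rho$ with $\rho \le \nu$ and $|\rho| = |C|$; this is just unwinding the cover relations in $\B_n^w$ starting from $\alpha_\eta$, using that the first $|\eta|$ coordinates of the subset and the weight $\eta$ are already ``saturated'' below any such $\beta$. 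The assignment $\beta \mapsto C^\rho$, after relabeling the ground set $[n]\setminus[|\eta|] = \{|\eta|+1, \dots, n\}$ order-isomorphically onto $[|\nu|]$, is then a bijection onto the interval $(\hat 0, [|\nu|]^\nu)$ in $\B_{|\nu|}^w$, and it visibly preserves and reflects the order (inclusion of subsets and componentwise order of weights are both simply carried over). Hence $(\alpha_\eta, [n]^{\nu+\eta}) \cong (\hat 0, [|\nu|]^\nu)$ as posets.

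Next I would track the group actions. The stabilizer of $\alpha_\eta = [|\eta|]^\eta$ in $\sym_n$ is the Young subgroup $\sym_{|\eta|} \times \sym_{|\nu|}$, where the first factor permutes $[|\eta|]$ and the second permutes $\{|\eta|+1,\dots,n\}$. Under the bijection above, the $\sym_{|\eta|}$ factor acts trivially on $(\hat 0, [|\nu|]^\nu)$ — it only moves coordinates inside the already-fixed set $[|\eta|]$, which does not appear in the data $C^\rho$ — while the $\sym_{|\nu|}$ factor acts on $(\hat 0, [|\nu|]^\nu)$ exactly as the standard $\sym_{|\nu|}$-action on $\B_{|\nu|}^w$ after the identification $\{|\eta|+1,\dots,n\} \cong [|\nu|]$. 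Passing to reduced simplicial cohomology of order complexes (a contravariant functor that turns poset isomorphisms into $G$-module isomorphisms, and turns a trivial action into a trivial action) gives
\begin{align*}
\widetilde H^{|\nu|-2}((\alpha_\eta, [n]^{\nu+\eta})) \cong_{\sym_{|\eta|}\times\sym_{|\nu|}} \mathbf{1}_{\sym_{|\eta|}} \otimes \widetilde H^{|\nu|-2}((\hat 0, [|\nu|]^\nu)),
\end{align*}
which is the assertion. One should note the dimension count is consistent: both intervals have length $|\nu|$, so the top cohomology sits in degree $|\nu|-2$, matching the statement.

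The main obstacle — really the only point requiring care — is the precise description of the interval $(\alpha_\eta, [n]^{\nu+\eta})$: one must check that every element above $\alpha_\eta$ indeed keeps $[|\eta|]$ and the weight $\eta$ intact in the first $|\eta|$ slots, so that the ``extra'' data is genuinely a weighted subset of an $|\nu|$-element set with weight bounded by $\nu$. This follows because in $\B_n^w$ we have $A^a \le B^b$ iff $A \subseteq B$ and $a \le b$ componentwise, so $\alpha_\eta \le \beta$ forces $[|\eta|] \subseteq \operatorname{supp-set}(\beta)$ and $\eta \le (\text{weight of }\beta)$, while $\beta \le [n]^{\nu+\eta}$ caps the total weight at $\nu+\eta$; subtracting $\eta$ from the weight and removing $[|\eta|]$ from the subset is then forced to land in $[\hat 0, [|\nu|]^\nu]$, and it is a bijection because one can add $\eta$ and $[|\eta|]$ back. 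Once this identification is in hand, equivariance and the cohomology translation are routine functoriality, so I do not expect further difficulty there.
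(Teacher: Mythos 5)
Your argument is essentially the paper's: both identify the open interval $(\alpha_\eta, [n]^{\nu+\eta})$ with $(\hat 0, [|\nu|]^\nu)$ by stripping off $[|\eta|]$ and subtracting $\eta$ from the weight, and then observe that $\sym_{|\eta|}$ acts trivially on the resulting data while $\sym_{|\nu|}$ acts in the standard way. The paper packages this as an isomorphism of $\sym_{|\eta|}\times\sym_{|\nu|}$-posets $[\alpha_\eta,[n]^{\nu+\eta}]\cong P\times[\hat 0,[|\nu|]^\nu]$ with $P$ the singleton poset $\{[|\eta|]^\eta\}$ carrying the trivial $\sym_{|\eta|}$-action, and then invokes K\"unneth — but since $P$ is a single point this reduces to exactly your direct poset isomorphism, so the two proofs coincide in substance.
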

\begin{proof} The poset $[\alpha_{\eta},[n]^{\nu+\eta}]$ is canonically isomorphic to a 
product poset $P \times Q$ where $P$ is the $\sym_{|\eta|}$-poset with the unique 
element $[|\eta|]^{\eta}$  and $Q$ is the interval $[\hat{0},([n]\setminus [|\eta|])^{\nu}]$ that 
is an $\sym_{[n]\setminus [|\eta|]}$-poset. Furthermore by identifying 
$\sym_{[n]\setminus [|\eta|]}$ with $\sym_{|\nu|}$ we have that $Q$ and $[\hat{0},[|\nu|]^{\nu}]$ 
are isomorphic $\sym_{|\nu|}$-posets. The isomorphism of the $\sym_{|\eta|}\times  
\sym_{|\nu|}$-posets $[\alpha_{\eta},[n]^{\nu+\eta}]$ and $P\times [\hat{0},[|\nu|]^{\nu}]$ induces 
the corresponding $\sym_{|\eta|}\times \sym_{|\nu|}$-isomorphism in 
cohomology. The Lemma follows after an application of K\"unneth's 
theorem and after noticing that the action of $\sym_{|\eta|}$ on $P$ is trivial.
\end{proof}

\begin{theorem}\label{theorem:cohomologyrepresentation}
 We have that 
\begin{align*}
 \sum_{n\ge 0} \sum_{\mu \in \wcomp_{n}}\ch  
\widetilde{H}^{n-2}((\hat{0},[n]^{\mu}))\,\xx^{\mu}=\Bigl 
(\sum_{n\ge 0}(-1)^n h_{n}(\xx)h_{n}(\yy)
\Bigr)^{-1}.
\end{align*}

\end{theorem}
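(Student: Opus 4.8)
The plan is to extract from Lemma~\ref{lemma:equivariantrecursivity} a recursion for the Frobenius characteristics $\ch\widetilde H^{n-2}((\hat 0,[n]^\mu))$ and then repackage that recursion as the claimed multiplicative-inverse identity. First I would fix the notation $F(\xx,\yy):=\sum_{n\ge 0}\sum_{\mu\in\wcomp_n}\ch\widetilde H^{n-2}((\hat 0,[n]^\mu))\,\xx^\mu$ and $G(\xx,\yy):=\sum_{n\ge 0}(-1)^n h_n(\xx)h_n(\yy)$, the series we claim are multiplicative inverses of each other in the ring of symmetric power series in $\yy$ with coefficients in $\Lambda_\QQ[[\xx]]$; note the product here is the usual product of symmetric functions in $\yy$, which under $\ch$ corresponds to the induction product on representations (Proposition~\ref{proposition:tensorch}).

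Next I would rewrite the right-hand side of \eqref{equation:equivariantrecursivity}. Each summand is indexed by $\eta\le\mu$ with $\eta\in\wcomp$; writing $\nu:=\mu-\eta$ so that $|\eta|+|\nu|=n$, Lemma~\ref{lemma:isomodules} identifies $\widetilde H^{n-|\eta|-2}((\alpha_\eta,[n]^{\nu+\eta}))$ with $\mathbf 1_{\sym_{|\eta|}}\otimes\widetilde H^{|\nu|-2}((\hat 0,[|\nu|]^\nu))$ as a $\sym_{|\eta|}\times\sym_{|\nu|}$-module. Applying $\ch$ and Proposition~\ref{proposition:tensorch}, the induced module in \eqref{equation:equivariantrecursivity} has Frobenius characteristic $h_{|\eta|}(\yy)\cdot\ch\widetilde H^{|\nu|-2}((\hat 0,[|\nu|]^\nu))$, since $\ch\mathbf 1_{\sym_m}=h_m(\yy)$. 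Thus Lemma~\ref{lemma:equivariantrecursivity} becomes, after multiplying by $\xx^\mu$ and summing over all $n$ and all $\mu\in\wcomp_n$,
\begin{align*}
\sum_{n\ge 0}\mathbf 1_{\sym_0}\delta_{n,0}\,\xx^{\mathbf 0}
=\sum_{n\ge 0}\sum_{\mu\in\wcomp_n}\ \sum_{\substack{\eta+\nu=\mu}}(-1)^{|\eta|}h_{|\eta|}(\yy)\,\xx^{\eta}\cdot\ch\widetilde H^{|\nu|-2}((\hat 0,[|\nu|]^\nu))\,\xx^{\nu}.
\end{align*}
The left-hand side is just $1$. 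On the right, the double sum over $\mu$ and the decomposition $\mu=\eta+\nu$ factors as a product of two power series in $\xx$ and $\yy$: the factor coming from $\eta$ is $\sum_{m\ge 0}(-1)^m h_m(\yy)\sum_{|\eta|=m}\xx^\eta=\sum_{m\ge 0}(-1)^m h_m(\yy)h_m(\xx)=G(\xx,\yy)$ (using $\sum_{|\eta|=m}\xx^\eta=h_m(\xx)$), and the factor coming from $\nu$ is exactly $F(\xx,\yy)$. Hence $G\cdot F=1$, which is the assertion $F=G^{-1}$ of the theorem, and replacing $\widetilde H^{n-2}((\hat 0,[n]^\mu))$ by $\L(\mu)$ via the isomorphism of Theorem~\ref{theorem:isomorphism} would give the statement as phrased in Theorem~\ref{theorem:representationmultiplicativeinverse}.

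The main obstacle, and the point demanding care, is the bookkeeping when translating the orbit-sum identity of Lemma~\ref{lemma:equivariantrecursivity} into a clean product of generating functions: one must check that the Young subgroup $\sym_{|\eta|}\times\sym_{n-|\eta|}$ appearing as the stabilizer of $\alpha_\eta$ is exactly the group across which Proposition~\ref{proposition:tensorch} lets us split $\ch$ of the induced module as a product, and that the boundary conventions ($r=0$ and $r=1$, i.e. $\eta=\mu$ and $|\eta|=n-1$) stated after \eqref{equation:whitneydualinterval} are consistent with reading $\widetilde H^{-2}$ and $\widetilde H^{-1}$ as the trivial representation of the appropriate group, so that the $\eta=\mu$ term contributes the constant $1$ on the left rather than spuriously to $F$. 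Once the indexing is set up so that $\eta$ ranges over all weak compositions and $\nu=\mu-\eta$ independently over all weak compositions, the factorization is immediate and the rest is formal.
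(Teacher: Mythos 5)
Your proposal is correct and follows essentially the same route as the paper's proof: apply Sundaram's lemma (Lemma~\ref{lemma:equivariantrecursivity}), use Lemma~\ref{lemma:isomodules} and Proposition~\ref{proposition:tensorch} to turn the induced modules into products $h_{|\eta|}(\yy)\cdot\ch\widetilde H^{|\nu|-2}((\hat 0,[|\nu|]^\nu))$, and factor the double sum over $(\eta,\nu)$ into the product $G\cdot F$. The only cosmetic difference is that the paper first truncates to $\supp(\mu)\subseteq[k]$ and then lets $k\to\infty$ to justify the formal manipulations, whereas you work directly in the completed ring $\widehat{\Lambda_R}$; both are valid bookkeeping choices.
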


\begin{proof}
We use the convention that $\widetilde {H}^{r}((\alpha_{\eta},[n]^{\mu}))=0$ for all $r$ whenever
$\alpha_{\eta} \nleq [n]^{\mu}$.
Applying the Frobenius characteristic map $\ch$ (in $\yy$ variables) to both sides of equation 
(\ref{equation:equivariantrecursivity}), multiplying by 
$\xx^{\mu}$ and summing over all $\mu \in \wcomp_{n}$ with  
$\supp(\mu)\subseteq [k]$ for a fixed $k\in \PP$ yields
\begin{align*}
\delta_{n,0}=&\sum_{\substack{\mu \in \wcomp_{n}\\\supp(\mu)\subseteq 
[k]}}\mathbf{x}^{\mu}
\ch \left( \bigoplus_{\substack{\eta \in \wcomp \\ \eta \le
\mu}} (-1)^{|\eta|}\widetilde 
{H}^{n-|\eta|-2}((\alpha_{\eta},[n]^{\mu}))\bigl 
\uparrow_{\sym_{|\eta|}\times \sym_{n-|\eta|}}^{\sym_{n}}\right )\\
=&\sum_{\substack{\mu \in \wcomp_{n}\\\supp(\mu)\subseteq 
[k]}}\mathbf{x}^{\mu}
 \sum_{\substack{\eta \in \wcomp \\ \eta \le
\mu}} (-1)^{|\eta|}\ch \left(\widetilde 
{H}^{n-|\eta|-2}((\alpha_{\eta},[n]^{\mu}))\bigl 
\uparrow_{\sym_{|\eta|}\times \sym_{n-|\eta|}}^{\sym_{n}}\right )\\
=& \sum_{\substack{\eta \in \wcomp \\ |\eta|\le n \\\supp(\eta)\subseteq 
[k]}} (-1)^{|\eta|}\sum_{\substack{\nu \in \wcomp_{n-|\eta|}\\\supp(\nu)\subseteq 
[k]}}\mathbf{x}^{\eta+\nu}\ch \left(\widetilde 
{H}^{|\nu|-2}((\alpha_{\eta},[n]^{\nu+\eta}))\bigl 
\uparrow_{\sym_{|\eta|}\times \sym_{|\nu|}}^{\sym_{n}}\right ).
\end{align*}
Using Lemma \ref{lemma:isomodules}, Proposition \ref{proposition:tensorch}, the fact that $\ch 
\mathbf{1}_{n}=h_{n}(\yy)$ and summing over all $n 
\ge 0$ we have

\begin{align*}
1=&\sum_{\substack{\eta \in \wcomp \\\supp(\eta)\subseteq 
[k]}} (-1)^{|\eta|}\sum_{\substack{\nu \in \wcomp\\\supp(\nu)\subseteq 
[k]}}\mathbf{x}^{\eta+\nu}\ch \left(\mathbf{1}_{\sym_{|\eta|}}\otimes 
\widetilde{H}^{|\nu|-2}((\hat{0},[|\nu|]^{\nu}))\bigl 
\uparrow_{\sym_{|\eta|}\times \sym_{|\nu|}}^{\sym_{|\eta|+|\nu|}}\right )\\ 
=&\sum_{\substack{\eta \in \wcomp \\\supp(\eta)\subseteq 
[k]}} (-1)^{|\eta|}\sum_{\substack{\nu \in \wcomp\\\supp(\nu)\subseteq 
[k]}}\mathbf{x}^{\eta+\nu}\ch \mathbf{1}_{\sym_{|\eta|}} \ch 
\widetilde{H}^{|\nu|-2}((\hat{0},[|\nu|]^{\nu}))\\ 
=&\sum_{\substack{\eta \in \wcomp \\\supp(\eta)\subseteq 
[k]}} (-1)^{|\eta|}h_{\sym_{|\eta|}}(\yy) \mathbf{x}^{\eta}\sum_{\substack{\nu \in 
\wcomp\\\supp(\nu)\subseteq 
[k]}}\ch 
\widetilde{H}^{|\nu|-2}((\hat{0},[|\nu|]^{\nu}))\mathbf{x}^{\nu}\\ 
=&\left(\sum_{n\ge 0} (-1)^{n}h_{n}(\yy)h_{n}(x_1,\dots,x_k)\right)\left 
(\sum_{n\ge0}\sum_{\substack{\nu \in 
\wcomp_n\\\supp(\nu)\subseteq 
[k]}}\ch 
\widetilde{H}^{n-2}((\hat{0},[n]^{\nu}))\mathbf{x}^{\nu}\right).
\end{align*}
Note now, from the sequence of equalities above, that for any $\mu \in \wcomp$ the coefficient of 
$\xx^{\mu}$ in both sides of these equalities of power series is the same and independent of the 
value of $k$ as long as $\supp(\mu)\subseteq [k]$. Hence, letting $k$ be large enough completes 
the proof.
\end{proof}

\begin{remark}
 Theorems \ref{theorem:isomorphism} and \ref{theorem:cohomologyrepresentation} yield Theorem 
\ref{theorem:representationmultiplicativeinverse} as a corollary.
\end{remark}

Recall that, for some ring $R$, the ring $\Lambda_R$ of symmetric functions is the subring 
of the power series ring 
$R[[\xx]]$ formed by power series of bounded degree that are invariant under 
the permutation of the variables. We denote by $\widehat{\Lambda_R}$ the ring of symmetric power 
series in $R[[\xx]]$, that is the completion of $\Lambda_R$  with respect to the valuation given by 
the degree.
Note that the power series in Theorems \ref{theorem:cohomologyrepresentation} and 
\ref{theorem:representationmultiplicativeinverse} belong to $\widehat{\Lambda_R}$ with
$R=\Lambda_{\QQ}$. Now consider the map 
$E_1:\widehat{\Lambda_R} \rightarrow R[[y]]$ defined by:
\begin{align*}
 E_1(p_i(\yy))=y\delta_{i,1}
\end{align*}
for $i \ge 1$ and extended multiplicatively, linearly and taking the corresponding limits to all of 
$\widehat{\Lambda_R}$. Note that $E_1$ is an algebra homomorphism, or \emph{specialization}, 
since $E_1$ is 
defined on generators. The effect of $E_1$ can be understood under the following 
definition of the 
Frobenius characteristic map. Let $V$ be a representation of $\sym_n$ and $\chi^V$ its character, 
then
 $$\ch(V)=\dfrac{1}{n!}\sum_{\sigma \in \sym_n} \chi^V(\sigma)p_{\gamma(\sigma)}(\yy),$$
where $\gamma(\sigma)$ is the cycle type of the permutation $\sigma \in \sym_n$. 

We have that
\begin{align*}
 E_1(\ch V) = \dfrac{1}{n!}\sum_{\sigma \in \sym_n} \chi^V(\sigma)E_1 
\left(p_{\gamma(\sigma)}(\yy) 
\right) =  \chi_V(id)\dfrac{y^n}{n!}
= \dim V \dfrac{y^n}{n!}.
\end{align*}
In particular since $h_n(\yy)=\ch(\mathbf{1}_{\sym_{n}})$, the Frobenius characteristic of 
the trivial representation of $\sym_n$, we have that
$E_1(h_n(\yy))=\dfrac{y^n}{n!}$.

\begin{proof}[Proof of Theorem \ref{theorem:multiplicativeinverse} ]
 Apply $E_1$ to Theorem \ref{theorem:representationmultiplicativeinverse}. 
\end{proof}

\subsection{Combinatorial interpretation of the multiplicative 
inverse}\label{subsection:gesselinterpretation}
We will use Theorem \ref{theorem:representationmultiplicativeinverse} to give an explicit formula 
for the representation of $\sym_n$ on $\L(\mu)$. To do this we will use a combinatorial 
interpretation of the multiplicative inverse of an 
ordinary generating function in terms of words with allowed and
forbidden links discovered in different but equivalent forms by Fr\"oberg 
\cite{Froberg1975}, Carlitz-Scoville-Vaughan \cite{CarlitzScovilleVaughan1976} and Gessel 
\cite{Gessel1977}. The theory outlined in \cite{Gessel1977} has a more general scope of application 
but it is equivalent to the simplified description that we present here.

We call a \emph{word} a finite string of elements of some partially ordered set $\A$ that we call
an \emph{alphabet} (and call its elements \emph{letters}). For 
example if $\A=\PP$ then $1222143$ and $2147$ are both examples of words. 
In particular any label by itself and the empty word $\emptyset$ are considered words. We denote by 
$\A^*$ the set of words with letters in $\A$. For any two $w_1, w_2 \in \A^*$ we define the 
\emph{product} $w_1w_2$ to be the word constructed by concatenation. 
For example if $w_1=13112$ and $w_2=316$ then $w_1w_2=13112316$. Note 
that the product is associative and so expressions like $w_1w_2\cdots w_k$
are well defined. Note that $\A^*$ together with concatenation is just the \emph{free monoid}
generated by $\A$ and for a given ring $R$ we denote by $R\langle \langle\A\rangle\rangle$ the ring 
of (noncommutative) power series on 
$\A$. To 
$w\in\A^*$ and $a\in \A$ we denote $m_a(w)$ the number of times the letter $a$ appears in $w$ and 
$|w|:=\sum_{a\in \A}m_a(w)$ the \emph{length} of $w$.
We call a \emph{link} the product of two letters. The set of links $\A^2$ is then in bijection with 
$\A \times \A$.

Consider a partition of the set $\A^2$ of links into two parts that we call the 
\emph{allowed links} $\LL(\A)$ and \emph{forbidden links} $\overline{\LL(\A)}$. Let $\W$ be the 
set of words in $\A^*$ constructed exclusively by the concatenation of allowed 
links and let $\overline{\W}$ the ones constructed using only forbidden links. For example 
if $\LL(\PP)$ is the set of links $ab$ whenever $a < b$ then $1234 \in \W$, $4221 \in \overline{\W}$ 
but $1324$ is neither in $\W$ nor in $\overline{\W}$. 
In particular, we consider the empty word (that we identify with $1 \in R$) and the letters in $\A$ 
as if they 
are both in $\W$ and $\overline{\W}$. 

Consider the following generating functions in $R \langle \langle\A \rangle\rangle$
\begin{align*}
 F(\W)&=\sum_{w \in \W}w,\\
  \overline{F}(\overline{\W})&=\sum_{w \in \overline{\W}}(-1)^{|w|}w.
  \end{align*}
The reader can consult \cite{Gessel1977} for a proof of the following theorem.
\begin{theorem}[c.f.\cite{Gessel1977}]\label{theorem:gessel}
In $R \langle \langle\A \rangle\rangle$, we have
\[
  F(\W)\overline{F}(\overline{\W})=1.
\]
\end{theorem}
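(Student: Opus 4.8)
The plan is to prove the identity $F(\W)\overline{F}(\overline{\W}) = 1$ in $R\langle\langle\A\rangle\rangle$ by a direct combinatorial argument, analyzing how a word decomposes relative to its links. First I would set up the key observation: for any nonempty word $w = a_1 a_2 \cdots a_m \in \A^*$, each of the $m-1$ internal links $a_k a_{k+1}$ is either allowed or forbidden. Rather than work with arbitrary words, the idea is to expand the product $F(\W)\overline{F}(\overline{\W})$ and show that all terms of positive length cancel in pairs, leaving only the constant term $1$ (the product of the empty word from each factor).

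The main step is the cancellation argument. A typical term in the expansion of $F(\W)\overline{F}(\overline{\W})$ is $(-1)^{|v|} uv$ where $u \in \W$ and $v \in \overline{\W}$, so $uv$ is an arbitrary word $w$ equipped with a chosen ``breakpoint'' that splits it into an allowed-word prefix $u$ followed by a forbidden-word suffix $v$; recall the convention that single letters and the empty word lie in both $\W$ and $\overline{\W}$, so several breakpoints may be admissible for the same $w$. For a fixed nonempty $w = a_1\cdots a_m$, I would describe exactly which positions $i \in \{0,1,\dots,m\}$ give a valid splitting $w = (a_1\cdots a_i)(a_{i+1}\cdots a_m)$ with prefix in $\W$ and suffix in $\overline{\W}$: position $i$ is valid precisely when every link inside $a_1\cdots a_i$ is allowed and every link inside $a_{i+1}\cdots a_m$ is forbidden. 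Because a single letter counts as both, whenever a position $i$ is valid, the neighboring position $i+1$ is valid if and only if the link $a_i a_{i+1}$ (which is then unconstrained from the prefix/suffix side) can be either — i.e., the set of valid positions for $w$ forms a contiguous block $\{p, p+1, \dots, q\}$, and moreover two consecutive valid positions $i, i+1$ correspond to terms with the same word $w$ but suffix lengths differing by $1$, hence with opposite signs $(-1)^{|v|}$. Pairing up consecutive valid positions then cancels everything, unless the block has odd size; the crux is to argue the block always has even size when $w$ is nonempty. This follows because the ``boundary'' links force the structure: if $w$ has length $\geq 1$, the extreme valid positions $p$ and $q$ are determined by the first forbidden link (reading left to right) and the last allowed link, and a short case analysis shows $q - p$ is always odd, so the block $\{p,\dots,q\}$ has an even number of elements and cancels completely.

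The step I expect to be the main obstacle is making the parity/cancellation bookkeeping airtight, in particular handling the degenerate cases cleanly: words of length $1$ (a single letter, which contributes both as $u$ with empty $v$ and as empty $u$ with $v$ the letter, giving $+a$ and $-a$ — these must cancel), and words all of whose links are allowed (resp.\ all forbidden). An alternative, and perhaps cleaner, route that I would keep in reserve is an algebraic one: define the linear operator on $R\langle\langle\A\rangle\rangle$ that records, for each word, the signed sum over admissible breakpoints, and show it equals the identity on the constant term and zero elsewhere by an inclusion–exclusion / telescoping identity — essentially the same argument repackaged. Since the theorem is quoted from \cite{Gessel1977} and its equivalents, I would also simply cite that reference for the full details and present here only the decomposition-and-telescoping sketch above, emphasizing that the product $uv$ ranges over all words with a marked prefix/suffix split and that the signed count of such splits telescopes to $\delta_{|w|,0}$.
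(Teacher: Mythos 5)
The paper itself offers no proof of Theorem~\ref{theorem:gessel}; it simply refers the reader to \cite{Gessel1977}, so there is no in-paper argument to compare against. Your sketch is the standard Carlitz--Scoville--Vaughan/Gessel sign-reversing cancellation argument and it is correct in outline: expanding $F(\W)\overline{F}(\overline{\W})$, the coefficient of a word $w=a_1\cdots a_m$ is the signed count $\sum_i(-1)^{m-i}$ over the valid breakpoints $i$ (those with $a_1\cdots a_i\in\W$ and $a_{i+1}\cdots a_m\in\overline{\W}$), and you correctly observe that the valid breakpoints form a contiguous interval and that consecutive ones carry opposite signs.

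The one step that deserves to be made airtight is the parity claim, and the true picture is in fact sharper than ``even size.'' For an interior valid position $0<i<m$, the validity of $i$ places no constraint on the crossing link $a_ia_{i+1}$; moving to $i+1$ adds precisely the requirement that $a_ia_{i+1}$ be allowed, while moving to $i-1$ adds precisely the requirement that $a_ia_{i+1}$ be forbidden. Since every link is exactly one of the two, exactly one of $i-1$, $i+1$ is valid, which immediately forces the nonempty block to have size exactly~$2$ (the endpoint cases $i=0$, which forces $w\in\overline{\W}$ and hence the block $\{0,1\}$, and $i=m$, which forces $w\in\W$ and hence $\{m-1,m\}$, conform to this). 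Your phrasing in terms of ``the extreme valid positions $p$ and $q$'' silently assumes the block is nonempty; you should note separately that the block may also be empty (e.g.\ when the first forbidden link precedes the last allowed link), in which case the coefficient vanishes trivially. With that small caveat and the sharper size-$0$-or-$2$ statement replacing the hand-waved ``short case analysis,'' the argument is complete and correct.
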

\subsection{Explicit description of the $\sym_n$ representation on $\L(n)$}

A composition $\alpha=(\alpha(1),\dots,\alpha(\ell))$ is a finite sequence of elements $\alpha(i) 
\in \PP$. We say that $\alpha$ is a \emph{composition of $n$} if $|\alpha|:=\sum_{i}\alpha(i)=n$. 
We 
denote $\comp$ the set of compositions and $\comp_n$ the set of compositions of $n$. We denote 
$\lambda(\alpha)$ the integer partition obtained from $\alpha$ by reordering its parts in weakly 
decreasing order.

For (perhaps empty) integer partitions $\nu$ and $\lambda$ such that $\nu\subseteq \lambda$ 
(that is $\nu(i) \le \lambda(i)$ for all $i$), let
$S^{\lambda / \nu}$ denote the Specht module of skew shape $\lambda / \nu$ and $s_{\lambda / \nu}$ 
the 
Schur function of shape  $\lambda / \nu$. Recall that $s_{\lambda / \nu}$ is the image in the ring 
of symmetric functions of the Specht module $S^{\lambda / \nu}$  under the Frobenius 
characteristic map $\ch$, i.e., $\ch S^{\lambda / \nu}=s_{\lambda / \nu}$.

A \emph{skew hook} is a connected skew shape that avoids the shape $(2,2)$. Every skew hook can be 
described by a composition $\alpha$ whose parts are the lengths of the horizontal steps from left 
to right. We denote by $H(\alpha)$ the skew hook determined by $\alpha \in \comp$. 
See Figure \ref{figure:exampleskewhook} for an example of the skew hook associated to the 
composition $\alpha=(3,2,1,1,3)$. The Specht modules of skew hook shape are frequently called 
\emph{Foulkes 
representations} since they were studied by Foulkes in \cite{Foulkes1976}.

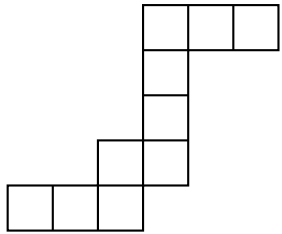
\begin{figure}[ht]
\begin{center} 
\begin{tikzpicture}[scale=0.6]
\tikzstyle{every path}=[thick];

\draw (0,0) -- (3,0)--(3,1)--(4,1)--(4,4)--(6,4)--(6,5)--(3,5)--(3,2)--(2,2)--(2,1)--(0,1)--cycle;
\draw (1,0) -- (1,1);
\draw (2,0) -- (2,1);
\draw (3,1) -- (2,1);
\draw (3,1) -- (3,2);
\draw (3,2) -- (4,2);
\draw (3,3) -- (4,3);
\draw (3,4) -- (4,4);
\draw (4,4) -- (4,5);
\draw (5,4) -- (5,5);

\end{tikzpicture}
\end{center}
\caption{Example of the skew hook corresponding to 
$\alpha=(3,2,1,1,3)$}\label{figure:exampleskewhook}
\end{figure}

\begin{theorem}[Gessel, personal communication]\label{theorem:explicitmultiplicativeinverse}
 We have that 
\begin{align*}
 \Bigl 
(\sum_{n\ge 0}(-1)^n h_{n}(\xx)h_{n}(\yy)
\Bigr)^{-1}=\sum_{n\ge 0} \sum_{\alpha \in \comp_{n}}e_{\lambda(\alpha)}(\xx)s_{H(\alpha)}(\yy).
\end{align*}
\end{theorem}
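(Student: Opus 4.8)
The plan is to specialize Gessel's identity in Theorem~\ref{theorem:gessel} to a well-chosen alphabet and transport it to symmetric functions. I would take $\A=\PP\times\PP$ with its product partial order (so $(i,j)\le(k,\ell)$ iff $i\le k$ and $j\le\ell$) and partition the links by declaring the link $(i,j)(k,\ell)$ to be \emph{forbidden} exactly when $(i,j)\le(k,\ell)$, and \emph{allowed} otherwise. I would then apply to the identity $F(\W)\,\overline{F}(\overline{\W})=1$ the continuous $\QQ$-algebra homomorphism $\varphi$ from $\QQ\langle\langle\A\rangle\rangle$ into the ring of formal power series in $\xx,\yy$ over $\QQ$ that sends each letter $(i,j)$ to the monomial $x_iy_j$. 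Since $\varphi$ turns a word of length $n$ into a monomial of degree $n$ in $\xx$ and $n$ in $\yy$, it is well defined on power series, and its image lies in $\widehat{\Lambda_{\Lambda_{\QQ}}}$, the ring of series separately symmetric in $\xx$ and in $\yy$ where the claimed identity lives.

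Evaluating the forbidden factor is quick: a word built only from forbidden links is a weakly increasing chain $(i_1,j_1)\le\cdots\le(i_n,j_n)$ of $\PP\times\PP$, and such chains correspond to pairs consisting of a weakly increasing word $i_1\le\cdots\le i_n$ and an \emph{independent} weakly increasing word $j_1\le\cdots\le j_n$; summing the weights with the signs $(-1)^{|w|}$ gives $\varphi\bigl(\overline{F}(\overline{\W})\bigr)=\sum_{n\ge0}(-1)^{n}h_n(\xx)h_n(\yy)$.

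The substance is the allowed factor. A word $(i_1,j_1)\cdots(i_n,j_n)$ built only from allowed links is one in which, at every position $r\in[n-1]$, one has $i_r>i_{r+1}$ or $j_r>j_{r+1}$; writing $\mathrm{Des}$ for the strict descent set of a word of integers, this says $\mathrm{Des}(i_1\cdots i_n)\cup\mathrm{Des}(j_1\cdots j_n)=[n-1]$. For fixed $n$, I would organize the sum of weights by first fixing $S:=\mathrm{Des}(j_1\cdots j_n)$, which forces $\mathrm{Des}(i_1\cdots i_n)\supseteq[n-1]\setminus S$, and write the inner contribution as the product of $\sum_{\mathrm{Des}(i_\bullet)\supseteq[n-1]\setminus S}\prod_r x_{i_r}$ and $\sum_{\mathrm{Des}(j_\bullet)=S}\prod_r y_{j_r}$. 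Letting $\alpha=\alpha(S)\in\comp_n$ be the composition whose parts are the gaps between consecutive elements of $S\cup\{0,n\}$: forcing a strict descent at every position outside $S$ cuts $[n]$ into consecutive blocks of sizes $\alpha(1),\alpha(2),\dots$, with the $\xx$-entries strictly decreasing inside each block and unconstrained between blocks, so the first sum factors as $e_{\lambda(\alpha)}(\xx)$; the second sum enumerates the integer words with descent set \emph{exactly} $S$, which equals the ribbon Schur function $s_{H(\alpha)}(\yy)$ — obtained either directly from reading words of semistandard fillings of $H(\alpha)$, or by inclusion--exclusion from the easy identity $\sum_{\mathrm{Des}(j_\bullet)\subseteq S}\prod_r y_{j_r}=h_{\alpha}(\yy)$ together with the Jacobi--Trudi (descent-class) expansion of $s_{H(\alpha)}$. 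As $S$ ranges over the subsets of $[n-1]$ the composition $\alpha(S)$ ranges over $\comp_n$, so $\varphi\bigl(F(\W)\bigr)=\sum_{n\ge0}\sum_{\alpha\in\comp_n}e_{\lambda(\alpha)}(\xx)s_{H(\alpha)}(\yy)$; multiplying the two evaluated factors and using $\varphi(F(\W))\,\varphi(\overline{F}(\overline{\W}))=1$ yields the theorem.

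I expect the one genuinely non-bookkeeping step to be the identification of $\sum_{\mathrm{Des}(j_\bullet)=S}\prod_r y_{j_r}$ with $s_{H(\alpha(S))}(\yy)$, which rests on a classical but nontrivial fact about ribbon Schur functions and also requires matching the orientation convention used here for $H(\alpha)$ against the descent-set convention; this matching is immaterial for the final identity because the sum runs over all of $\comp_n$. The remaining care, rather than ideas, goes into checking that $\varphi$ is a well-defined continuous homomorphism, into the degenerate cases $n=0$ and $n=1$ and the status of the empty word and of single letters under the conventions of Theorem~\ref{theorem:gessel}, and into the observation that $e_{\lambda(\alpha)}$ is insensitive to reordering the parts of $\alpha$.
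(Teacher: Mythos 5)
Your proposal is correct and follows the paper's proof essentially step for step: the same alphabet $\PP\times\PP$ with the same product order, the same choice of allowed/forbidden links, the same specialization $(i,j)\mapsto x_iy_j$, and the same organization of the allowed-word sum by the descent set of the $\yy$-coordinate sequence, with the inner sums identified as $e_{\lambda(\alpha)}(\xx)$ and the ribbon Schur function $s_{H(\alpha)}(\yy)$. Your writeup is a bit more explicit than the paper on the ribbon/descent-class identification, but the argument is the same.
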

\begin{proof}
 Consider the alphabet $\A=\PP \times \PP$, that is, the set of biletters of the form $(a,b)$, 
with partial order given by $(a,b)\le(c,d)$ if and only if $a\le c$ and $b\le d$ (product order). 
Let $$\LL(\A)=\{(a,b)(c,d) \in \A^2\,\mid\,(a,b)\nleq(c,d)\}$$ so $\W$ is the set of 
words of the form
$$(a_1,b_1)\nleq(a_2,b_2)\nleq\cdots\nleq (a_n,b_n),$$ 
$\overline{\W}$ is the set of words of the form 
 $$(a_1,b_1)\le(a_2,b_2)\le\cdots\le(a_n,b_n)$$ and we have  by Theorem \ref{theorem:gessel} that 
$F(\W)\overline{F}(\overline{\W})=1$.

Now we consider the $\QQ$-algebra homomorphism 
(or specialization) $\xi:\QQ\langle \langle\A\rangle\rangle\rightarrow \QQ[[\xx,\yy]]$ defined on 
$\A^*$ by 
$\xi(w)=\prod_{i=1}^{|w|}x_{w_i(1)}y_{w_i(2)}$ where $w_i=(w_i(1),w_i(2))$ is the $i$th letter of 
$w$. Any word $w$ in $\overline{\W}$ of length $|w|=n$ can be uniquely constructed with a pair of 
weakly increasing sequences 
\begin{align*}
a_1\le a_2 \le\cdots\le a_n\\
b_1\le b_2\le\cdots\le b_n
\end{align*}

 and so
$$\xi(\overline{F}(\overline{\W}))=\sum_{n\ge 0}(-1)^n h_{n}(\xx)h_{n}(\yy).$$
Note that $(a_1,b_1)\nleq(a_2,b_2)$ if and only if either
\begin{enumerate}
 \item $b_1>b_2$ or
 \item $b_1\le b_2$ and $a_1>a_2$,
\end{enumerate}
so any word in $\W$ can be uniquely constructed with a pair of sequences:
\begin{align*}
 b_1\le\cdots \le b_{\alpha_1}&>&b_{\alpha_1+1}\le \cdots \le
b_{\alpha_1+\alpha_2}&>&\cdots&>&b_{\alpha_1+\cdots+\alpha_{\ell-1}+1}\le \cdots \le
b_{\alpha_1+\cdots+\alpha_{\ell}}\\
\underbrace{a_1>\cdots > a_{\alpha_1}}_{\alpha_1}&\,\,\,,& \underbrace{a_{\alpha_1+1}> \cdots >
a_{\alpha_1+\alpha_2}}_{\alpha_2}&\,\,\,,&\cdots&\,\,\,,&\underbrace{a_{\alpha_1+\cdots+\alpha_{
\ell-1 } +1 } > \cdots >
a_{\alpha_1+\cdots+\alpha_{\ell}}}_{\alpha_{\ell}}
\end{align*}
where $\alpha \in \comp$. 
Then
\begin{equation*}
\xi(F(\W))=\sum_{\alpha \in \comp}e_{\lambda(\alpha)}(\xx)s_{H(\alpha)}(\yy). \qedhere
 \end{equation*}
\end{proof}
\begin{remark}
 Note that essentially the same proof of Theorem \ref{theorem:explicitmultiplicativeinverse} 
presented above gives a more general noncommutative version of the identity that reduces to the one 
stated after applying the specialization that lets the variables within $\xx$ and within $\yy$ 
commute.
\end{remark}

If we number the cells of a skew hook $H$ left-to-right and bottom-to-top, $i$ is a \emph{descent} 
of $H$ if the cell $i+1$ is above the cell $i$. We denote by $\des(H)$ the descent set of $H$. For 
a standard Young tableau of shape $\lambda$ a \emph{descent} is an entry $i$ that is in a higher 
row than $i+1$.
\begin{proposition}[{c.f. \cite{Wachs2007}}] \label{proposition:skewhook}For a skew hook $H$ we 
have that
$$s_H(\yy)=\sum_{\lambda \vdash n}c_{H,\lambda}s_\lambda(\yy),$$
where $c_{H,\lambda}$ is the number of Young tableaux of shape $\lambda$ and descent set $\des(H)$.
\end{proposition}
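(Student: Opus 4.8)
The plan is to prove Proposition \ref{proposition:skewhook} by combining the classical theory of ribbon (skew hook) Schur functions with the fundamental quasisymmetric expansion. Recall that a skew hook $H$ of size $n$ corresponds bijectively to a subset $D = \des(H) \subseteq [n-1]$: the cell $i+1$ sits directly above cell $i$ exactly when $i \in D$. The key observation is that a semistandard filling of the skew hook $H$ that is weakly increasing along rows (left to right) and strictly increasing up columns translates, reading the entries along the single connected strip from the bottom-left cell to the top-right cell, into a sequence $t_1 \le t_2 \le \cdots \le t_n$ of positive integers that is \emph{strictly} increasing precisely at the positions in $D$ (where we move up a cell) and weakly increasing elsewhere. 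Summing the monomials $\yy^t = y_{t_1}\cdots y_{t_n}$ over all such sequences gives exactly Gessel's fundamental quasisymmetric function $F_{n,D}(\yy)$, so $s_H(\yy) = F_{n,\des(H)}(\yy)$.

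The next step is to recall the standard expansion of the fundamental quasisymmetric function into Schur functions: for any $D \subseteq [n-1]$,
\begin{align*}
F_{n,D}(\yy) = \sum_{\lambda \vdash n} d_{\lambda,D}\, s_\lambda(\yy),
\end{align*}
where $d_{\lambda,D}$ is the number of standard Young tableaux of shape $\lambda$ whose descent set is exactly $D$. This is itself a consequence of the fact that the Schur function $s_\lambda$ is the sum of $F_{n,\des(T)}$ over all standard Young tableaux $T$ of shape $\lambda$ (obtained by standardizing semistandard tableaux), together with linear independence of the $F_{n,D}$. Here the descent set of a standard Young tableau is taken in the convention stated just before the proposition: $i$ is a descent of $T$ if $i+1$ lies in a strictly higher row than $i$ (which matches the convention implicit in the row-strict/column-weak reading direction used above, after transposing if necessary — one must be careful to align the two conventions, and I would spell out that the skew-hook reading convention and the SYT descent convention are compatible because reading the strip upward corresponds to moving to a higher row).

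Putting the two displayed identities together immediately yields $s_H(\yy) = \sum_{\lambda \vdash n} c_{H,\lambda}\, s_\lambda(\yy)$ with $c_{H,\lambda} = d_{\lambda, \des(H)}$ equal to the number of standard Young tableaux of shape $\lambda$ and descent set $\des(H)$, as claimed. Alternatively, and perhaps more self-containedly, one can argue directly via the Jacobi--Trudi / Lindström--Gessel--Viennot lattice path model for skew Schur functions specialized to the ribbon case, or cite the determinantal ribbon formula $s_H = \sum_{\beta \preceq \alpha}(-1)^{\ell(\alpha)-\ell(\beta)} h_\beta$ and match it against the Frobenius image of the ribbon (Foulkes) representation; but the quasisymmetric route is cleanest.

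The main obstacle I anticipate is purely bookkeeping: making sure the descent convention for skew hooks (``cell $i+1$ above cell $i$'') lines up with the descent convention for standard Young tableaux (``$i+1$ in a higher row than $i$'') and with the monotonicity direction (rows weakly increasing, columns strictly increasing) in a consistent way, so that the resulting quasisymmetric function is $F_{n,\des(H)}$ and not $F_{n,[n-1]\setminus\des(H)}$ or the image under $\omega$. Once the conventions are pinned down, the proof is a two-line citation of well-known facts, so I would keep this proof short and refer to \cite{Wachs2007} (as the statement already does) or \cite[Chapter 7]{Stanley1999} for the quasisymmetric expansion and the ribbon Schur function identity.
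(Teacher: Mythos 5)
The paper does not actually prove this proposition; it is stated with a citation to Wachs's lecture notes. Your proposal is therefore being compared against a known classical argument rather than against something the paper spells out, and unfortunately the key step in your proposal is false. The identity $s_H(\yy)=F_{n,\des(H)}(\yy)$ does not hold: the fundamental quasisymmetric function $F_{n,D}$ is not a symmetric function for general $D$, whereas $s_H$ is a skew Schur function and hence symmetric. Concretely, for $\alpha=(1,2)$ one has $H(\alpha)=(2,1)$ and $\des(H)=\{1\}$, but $s_{(2,1)}=F_{3,\{1\}}+F_{3,\{2\}}$, and $F_{3,\{1\}}=\sum_{i<j\le k}y_iy_jy_k$ already fails to be symmetric (the coefficient of $y_1y_2^2$ is $1$ while the coefficient of $y_1^2y_2$ is $0$). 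The error comes from the reading of the ribbon tableau: moving from cell $i$ to cell $i+1$ within a row gives a weak increase, but moving up a column gives a \emph{strict decrease} (columns of a semistandard skew tableau are strictly increasing in the direction of increasing row index, which is toward the bottom-left cell, not the top-right one). So reading the ribbon from bottom-left to top-right produces an \emph{arbitrary} word $w$ with descent set exactly $\des(H)$, not a weakly increasing sequence with forced strict increases. In other words the correct generating-function identity is
\begin{equation*}
s_H(\yy)=\sum_{\substack{w\in\PP^n\\ \mathrm{Des}(w)=\des(H)}}y_{w_1}\cdots y_{w_n},
\end{equation*}
which is a different object from $F_{n,\des(H)}$. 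Your secondary claim that $F_{n,D}=\sum_{\lambda}d_{\lambda,D}s_\lambda$ with $d_{\lambda,D}$ the number of SYT of shape $\lambda$ and descent set $D$ has the same problem: that expansion would force $F_{n,D}$ to be symmetric, and the fact that $s_\lambda=\sum_{T\in\mathrm{SYT}(\lambda)}F_{n,\mathrm{Des}(T)}$ does not invert to give a Schur expansion of each $F_{n,D}$ (the change-of-basis matrix is not square, as there are more subsets of $[n-1]$ than partitions of $n$).

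The remedy is to replace the first step by the displayed identity above and then derive the Schur expansion of the right-hand side directly, for instance by applying RSK to the word $w$: RSK sends $w$ to a pair $(P,Q)$ with $P$ semistandard and $Q$ standard of the same shape, and $\mathrm{Des}(w)=\mathrm{Des}(Q)$, so
\begin{equation*}
\sum_{\mathrm{Des}(w)=D}\yy^{w}=\sum_{\lambda\vdash n}\ \sum_{\substack{Q\in\mathrm{SYT}(\lambda)\\ \mathrm{Des}(Q)=D}}\ \sum_{P\in\mathrm{SSYT}(\lambda)}\yy^{P}=\sum_{\lambda\vdash n}c_{H,\lambda}\,s_\lambda(\yy).
\end{equation*}
Equivalently one can use the quasisymmetric expansion $s_H=\sum_{T\in\mathrm{SYT}(H)}F_{n,\mathrm{Des}(T)}$ (which is what you should have obtained in place of the single term $F_{n,\des(H)}$) together with the theory of $P$-partitions for the zigzag poset associated to $D$, or match against the Jacobi--Trudi expansion as you mention. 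The convention-checking you flag is indeed necessary, but it sits downstream of this structural fix; as written, the proof would establish a false identity as its main lemma.
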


\begin{remark}
Theorems \ref{theorem:multiplicativeinverse} and \ref{theorem:explicitmultiplicativeinverse} 
together with Proposition \ref{proposition:skewhook} yield Theorem \ref{theorem:representation} as a 
corollary.
\end{remark}

\section{The Koszul property}\label{section:Koszul}
A quadratic associative algebra $A$ and its Koszul dual (co)algebra $A^{\subfactorial}$ are 
said to be \emph{Koszul} if the 
\emph{Koszul complex} $A^{\subfactorial}\otimes_{\kappa}A$  is \emph{acyclic} (see 
\cite{LodayVallette2012} for the 
definitions). There 
are various techniques to conclude the Koszul property of an associative algebra. We use the 
technique in 
\cite{Mendez2010} that involves constructing a family of posets associated to $A$ and 
determining that these posets are Cohen-Macaulay. 
For a finite subset $S \in \PP$ the equivalence classes of colored permutations that generate 
$\S_S(n)$, considering the 
symmetry relations (\ref{relation:colsym1}) and (\ref{relation:colsym2}),   can be 
identified with \emph{colored subsets} $A^\mu$, where $A\subseteq [n]$ and $\mu \in \wcomp_{|A|}$ 
with $\supp(\mu)\in S$. For example 
${\color{blue}2^1}{\color{orange}1^4}$, ${\color{blue}1^1}{\color{orange}2^4}$, 
${\color{blue}2^4}{\color{orange}1^1}$ and ${\color{blue}1^4}{\color{orange}2^1}$ all represent 
the same generator in $\S_{[4]}(2)$, and we can represent this generator by $\{1,2\}^{(1,0,0,1)}$ 
since the underlying labels are $1$ and $2$ and exactly one of them has color $1$ and one has 
color $4$. Let $\overline \S_S(n)$ be the set of colored subsets whose underlying set 
is $A=[n]$, the map $[n]\mapsto \overline \S_S(n)$ defines a functor (Joyal's species 
\cite{Joyal1986}) from the 
category $\mathnormal{Set}$ of finite sets and bijections to the category $\mathfrak{F}$ of finite 
sets and arbitrary functions. It is not hard to 
verify that this functor actually defines a \emph{quadratic} (by relations (\ref{relation:colsym1}) 
and 
(\ref{relation:colsym2})) \emph{cancellative (injective) monoid or c-monoid} in the sense of 
\cite{Mendez2010}. Here given two disjoint sets $A$ and $B$, the multiplication 
$\mu_{A,B}:\overline\S_S(A)\times \overline\S_S(B) \rightarrow \overline\S_S(A\cup B)$ that gives 
the monoidal structure to $\overline\S_S$ is given by
$\mu_{A,B}(A^{\eta} \times B^{\nu})=(A\cup B)^{\eta +\nu}$ for $\eta \in \wcomp_{|A|}$ and $\nu 
\in \wcomp_{|B|}$.
Moreover, the family of posets associated to $\overline \S_S$ in the construction  
in 
\cite[Section 5.1]{Mendez2010} is precisely the family of posets $\B_n^S$ for $n\ge 0$ and 
the analytic functor $\S_S:\mathnormal{Vect}_{\kk}\rightarrow \mathnormal{Vect}_{\kk}$ associated 
to $\overline \S_S$ is $V\mapsto \S_S(V)$.
\begin{theorem}[{\cite[Proposition 22, Lemma 40, Theorem 41]{Mendez2010}}]\label{theorem:mendez} 
Let $\overline M$ be a $c$-monoid with associated analytic monoid $M$. Then $M$ and its 
Koszul dual $M^{!}$ are Koszul if and only if the maximal intervals of the poset associated to 
$\overline M$ are Cohen-Macaulay. 
\end{theorem}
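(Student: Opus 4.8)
The plan is to run the bar--complex criterion for Koszulness and to match it, sector by sector, with the reduced simplicial chain complexes of the order complexes of the maximal intervals of the associated posets. Recall that $M$ is Koszul precisely when its reduced bar complex $\overline B(M)$ has homology concentrated on the diagonal, i.e. $\mathrm{Tor}^{M}_{p}(\kk,\kk)$ lies in weight $p$ for every $p$ (equivalently the Koszul complex $M^{\subfactorial}\otimes_{\kappa}M$ is acyclic); recall also that this condition is self-dual \cite{Priddy1970,LodayVallette2012}, so $M$ is Koszul if and only if $M^{!}$ is, and it is therefore enough to characterise Koszulness of $M$.

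First I would record the combinatorial dictionary supplied by the $c$-monoid structure, following \cite[Section 5.1]{Mendez2010}. The posets $P_{n}=P_{n}(\overline M)$ are ranked, their maximal elements $x$ are the objects of the $n$-th multilinear component, and the maximal intervals $[\hat 0,x]$ are pure of length $\rho(x)=n$ (so Cohen--Macaulayness is defined). Left-cancellativity of $\overline M$ is exactly what makes the ``quotient'' pieces $y/z$ for $z\le y$ well defined, so that a chain $\hat 0=z_{0}<z_{1}<\cdots<z_{p}=x$ of $[\hat 0,x]$ decomposes the corresponding summand of the bar term $\overline M^{\otimes p}$ as a tensor product of the one-dimensional (possibly sign-representation) sectors $M(z_{1}/z_{0}),\dots,M(z_{p}/z_{p-1})$. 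Summing over chains, one gets that in the sector indexed by $x$ the reduced bar complex of $M$ is isomorphic, up to a sign twist and a degree shift, to the reduced simplicial chain complex of $\Delta((\hat 0,x))$, whose homology is $\widetilde H^{p-2}((\hat 0,x))$; hence $\mathrm{Tor}^{M}_{p}(\kk,\kk)$ in that sector is $\widetilde H^{p-2}((\hat 0,x))$. The top case $p=\rho(x)$ of this is the isomorphism $M^{\subfactorial}\cong\bigoplus_{x}\widetilde H^{\rho(x)-2}((\hat 0,x))$, which for $M=\S_{S}$ is Theorems~\ref{theorem:isomorphism} and \ref{theorem:explicitisomorphism} and classically Solomon's isomorphism \eqref{equation:isosolomon}; the argument there --- presenting top cohomology by maximal chains modulo the coboundary relations and matching these with the orthogonal relations $R^{\perp}$ --- is the model for the general construction.

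Granting the dictionary, the equivalence is immediate. Koszulness of $M$ unwinds, through the sector decomposition, to the condition that for every maximal element $x$ of every $P_{n}$ one has $\widetilde H^{i}((\hat 0,x))=0$ whenever $i<\rho(x)-2$, i.e. the reduced cohomology of each such open interval is concentrated in top degree. On the other hand, $[\hat 0,x]$ is Cohen--Macaulay exactly when every open subinterval $(a,b)$ of it has reduced cohomology concentrated in degree $\ell([a,b])-2$; but every such $(a,b)$ is isomorphic to an open interval $(\hat 0,z)$ with $z$ a maximal element of a smaller poset in the same family --- the self-similarity built into the construction, the same reduction used in the proof of Theorem~\ref{theorem:ellabelingposet}. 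So ``all maximal intervals of $\overline M$ are Cohen--Macaulay'' is literally the same condition as ``$\widetilde H^{i}((\hat 0,x))=0$ for $i<\rho(x)-2$ for every maximal $x$'', which is Koszulness of $M$, equivalently of $M^{!}$.

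The one genuine difficulty is the dictionary of the second paragraph: proving that the reduced bar (or Koszul) complex of an analytic $c$-monoid really does split, sector by sector, as the reduced simplicial chain complex of the associated order complex, \emph{with matching differentials and signs}. This is the content of \cite[Lemma~40, Theorem~41]{Mendez2010} --- the associative-algebra analogue of Vallette's operadic criterion \cite{Vallette2007}, and in the same spirit as the passage from coboundary relations to cohomology in Section~\ref{section:isomorphism} --- and left-cancellativity of $\overline M$ enters precisely in making the quotient pieces, and hence the poset intervals, behave as the building blocks of a resolution. Once that chain-complex isomorphism is in place both directions of the equivalence follow at the level of homology, with Lemma~\ref{lemma:sundaramsum} providing the Euler-characteristic shadow of the Cohen--Macaulay case.
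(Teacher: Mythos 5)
The paper does not supply a proof of this statement --- it is imported wholesale from \cite{Mendez2010} (the bracket citing Proposition 22, Lemma 40 and Theorem 41 there) and then applied, via Theorem \ref{theorem:ellabeling}, to deduce Theorem \ref{theorem:koszulity} --- so there is no in-paper argument to compare your proposal against. As a reconstruction of the cited result your sketch has the right architecture: self-duality of Koszulness reduces the equivalence to a single algebra; the reduced bar complex of $M$ splits into sectors indexed by maximal poset elements $x$, and in the sector of $x$ it is, after a degree shift, the reduced chain complex of $\Delta((\hat 0,x))$, so that $\mathrm{Tor}^M_p(\kk,\kk)$ in that sector is $\widetilde H_{p-2}((\hat 0,x))$; and left-cancellativity drives the self-similarity of intervals (every $[a,b]\subseteq[\hat 0,x]$ is canonically $[\hat 0,b/a]$ for a maximal element $b/a$ of a shorter poset in the family, exactly the reduction used in Theorem \ref{theorem:ellabelingposet} for $\B_n^S$), which converts ``cohomology concentrated in top degree for all maximal intervals'' into ``all subintervals have cohomology concentrated in top degree,'' i.e.\ Cohen--Macaulayness, and back. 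You correctly identify the bar--to--poset chain-complex isomorphism as the substantive step and defer it to \cite{Mendez2010}, which matches the division of labour the paper itself assumes, so the proposal is best described as an accurate annotated outline of the cited proof rather than an independent one. One small caveat: the closing invocation of Lemma \ref{lemma:sundaramsum} is out of place here --- Sundaram's lemma is an Euler-characteristic identity that the paper uses in Section \ref{section:frobeniuscharacteristic} to compute Frobenius characteristics and plays no role in establishing the Koszulness equivalence.
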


Then the following theorem is a consequence of Theorems 
\ref{theorem:ellabeling} and \ref{theorem:mendez}.

\begin{theorem}\label{theorem:koszulity}
For a finite dimensional vector space $V$ and a finite subset $S \subseteq \PP$, the Koszul dual 
associative algebras $\S_S(V)$ and $\L_S(V)$ are Koszul.
\end{theorem}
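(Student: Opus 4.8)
The plan is to deduce the theorem directly from M\'endez's criterion, Theorem~\ref{theorem:mendez}, applied to the $c$-monoid $\overline\S_S$ constructed in the paragraph preceding the statement. First I would record the three facts that make $\overline\S_S$ an admissible input for that criterion, all of which were sketched above: $(i)$ $\overline\S_S$ is a species (a functor from finite sets and bijections to finite sets and arbitrary functions) carrying an associative, unital, and cancellative multiplication $\mu_{A,B}(A^{\eta}\times B^{\nu})=(A\cup B)^{\eta+\nu}$, and it is \emph{quadratic} because the defining relations are exactly the degree-two symmetry relations \eqref{relation:colsym1} and \eqref{relation:colsym2}; $(ii)$ the analytic monoid associated to $\overline\S_S$ is $V\mapsto\S_S(V)$ and the analytic monoid associated to its Koszul dual $c$-monoid is $V\mapsto\L_S(V)$ --- this is the computation with relations \eqref{relation:colantisym1} and \eqref{relation:colantisym2} already used to define $\L_S(V)$ in Section~\ref{section:introduction}; $(iii)$ the family of posets that the construction of \cite[Section~5.1]{Mendez2010} attaches to $\overline\S_S$ is precisely $\{\B_n^S\}_{n\ge 0}$, since a rank-$i$ element of that poset records a colored subset of size $i$ and a cover relation records right-multiplication by a single colored letter, which is exactly the cover relation $A^{\mu}\lessdot B^{\nu}$ of $\B_n^S$.

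I would then note where the hypotheses of the theorem enter: finiteness of $S$ is what makes $\overline\S_S(A)$ a finite set for every finite $A$, hence what makes $V\mapsto\S_S(V)$ a genuine analytic functor (and likewise for $\L_S$), and finite dimensionality of $V$ is what makes the Koszul dual algebra of $\S_S(V)$ well defined as $\L_S(V^*)\simeq\L_S(V)$ in the sense recalled in Section~\ref{section:introduction}. With this in place, Theorem~\ref{theorem:mendez} reduces the claim to a single topological statement: that every maximal interval of $\B_n^S$ is Cohen--Macaulay. The maximal intervals of $\B_n^S$ are the intervals $[\hat 0,[n]^{\mu}]$ for $\mu\in\wcomp_n$ with $\supp(\mu)\subseteq S$.

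Finally I would invoke the work of Section~\ref{section:homotopytype}: by Theorem~\ref{theorem:ellabelingposet} (already implied by Theorem~\ref{theorem:ellabeling}) the labeling $\bar\lambda$ restricts to an EL-labeling of $[\hat 0,[n]^{\mu}]$, and an EL-shellable bounded poset is Cohen--Macaulay. Hence all maximal intervals of $\B_n^S$ are Cohen--Macaulay, and Theorem~\ref{theorem:mendez} yields that $\S_S(V)$ and its Koszul dual $\L_S(V)$ are Koszul. I do not anticipate a real obstacle here: the substantive content --- the EL-labeling and its shellability consequences --- was already carried out in Section~\ref{section:homotopytype}, so the only care required is the bookkeeping of checking that $\overline\S_S$ literally satisfies the $c$-monoid axioms of \cite{Mendez2010} and that the abstract poset produced by that reference's construction coincides on the nose with $\B_n^S$, together with the identification of the Koszul dual analytic functor with $\L_S$.
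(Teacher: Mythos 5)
Your proposal is correct and follows essentially the same route as the paper: the paper's proof of Theorem~\ref{theorem:koszulity} is precisely to observe that $\overline\S_S$ is a quadratic $c$-monoid whose associated family of posets is $\{\B_n^S\}$, and then combine M\'endez's criterion (Theorem~\ref{theorem:mendez}) with the Cohen--Macaulayness supplied by the EL-labeling of Theorem~\ref{theorem:ellabelingposet}. The extra remarks you make about where finiteness of $S$ and finite dimensionality of $V$ enter are sound bookkeeping that the paper leaves implicit, but they do not change the argument.
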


\begin{remark}
 The theory developed in \cite{Mendez2010} can also be used to conclude the isomorphism of Theorem 
\ref{theorem:isomorphism}. This isomorphism is also a consequence of the fact that the family of 
posets $\B_n^S$ is the family associated to the c-monoid $\overline \S_S$. In Section 
\ref{section:isomorphism} of this work we went a bit in the longer direction by 
constructing and providing an explicit isomorphism between the cohomology of the posets associated 
to this particular c-monoid and its Koszul dual.
\end{remark}

\begin{remark}
 Note that we obtain as a corollary of  Theorem \ref{theorem:Smu_trivial} the equality of 
symmetric functions \begin{align*}
 \sum_{\mu \in \wcomp_{n}}\ch \S(\mu)\,\xx^{\mu}=h_n(\xx)h_n(\yy),
\end{align*}
and then Theorem \ref{theorem:representationmultiplicativeinverse} can be written as

\begin{align}\label{equation:koszulgeneratingidentity}
 \Bigl (\sum_{n\ge
0}(-1)^n \sum_{\mu \in \wcomp_{n}}\ch \S(\mu)\,\xx^{\mu}
\Bigr)\Bigl (\sum_{n\ge 0} \sum_{\mu \in \wcomp_{n}}\ch \L(\mu)\,\xx^{\mu}\ \Bigr )=1.
\end{align}
This is not a surprising coincidence since the version of equation 
(\ref{equation:koszulgeneratingidentity}) but with 
$x_i=1$ for all $i$ is a natural consequence of the Koszulness of $\S_S(V)$ and $\L_S(V)$ 
considering only the actions of the symmetric groups $\sym_n$ for all $n$ (see for example 
\cite{Mendez2010} or \cite{LodayVallette2012}). Equation (\ref{equation:koszulgeneratingidentity}) 
itself is also a consequence of the Koszulness of $\S_S(V)$ and $\L_S(V)$ if we consider 
a more general action. For all $k$, the general linear groups $\mathop{GL}_k$ act on the set of 
weak compositions of length $k$ and, since each of these actions commutes with the actions 
of the symmetric groups, then the spaces $\S_{[k]}(n)$ and $\L_{[k]}(n)$ are $\mathop{GL}_k 
\times \sym_n$-modules. We then consider the modified characteristic map $\ch$, from the set of 
polynomial $\mathop{GL}_k \times \sym_n$ representations to the set of symmetric functions in $\yy$ 
with symmetric polynomial coefficients in $(x_1,\dots,x_k)$, that in an irreducible $\mathop{GL}_k 
\times \sym_n$-module of the form $W \otimes V$, with $W$ an irreducible $\mathop{GL}_k$-module and 
$V$ an irreducible $\sym_n$-module, is defined by:
 $$\ch(W\otimes V):=\textstyle\mathop{char}_{\mathop{GL}_k}(W)\ch_{\sym_n}(V),$$
 where $\ch_{\sym_n}(V)$ is the Frobenius characteristic map considered before in $\yy$ variables 
and $\mathop{char}_{\mathop{GL}_k}(W)$ is the character map that assigns to a polynomial 
representation of $\mathop{GL}_k$ a symmetric polynomial in variables $x_1,x_2,\dots,x_k$.
\end{remark}

\section{Specializations}\label{section:specializations}

It turns out that Theorems \ref{theorem:representationmultiplicativeinverse} and
\ref{theorem:explicitmultiplicativeinverse} are doubly-symmetric generalizations of certain 
classical identities, including Euler's exponential formula for Eulerian polynomials.  

In Section \ref{section:frobeniuscharacteristic} from Theorem 
\ref{theorem:representationmultiplicativeinverse} using specialization $E_1$ we obtained Theorem 
\ref{theorem:multiplicativeinverse}. Together with Theorem \ref{theorem:dimensionstype} it can be  
written in the following form already obtained by the author in \cite{Dleon2016}.
\begin{theorem}[\cite{Dleon2016}]\label{theorem:exponentialmultiplicative}
 We have
 \begin{align*}
   \left(\sum_{n\ge0}(-1)^{n}h_{n}(\xx)\dfrac{y^n}{n!}\right )
^{-1}=\sum_{n\ge0}\left(\sum_{\sigma \in \sym_n}e_{\lambda(\sigma)}(\xx)\right)\dfrac{y^n}{n!},
 \end{align*}
\end{theorem}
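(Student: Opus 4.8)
The plan is to derive Theorem~\ref{theorem:exponentialmultiplicative} directly from Theorem~\ref{theorem:multiplicativeinverse} (which supplies the multiplicative inverse formula for the Frobenius-free, $E_1$-specialized series) together with Theorem~\ref{theorem:dimensionstype} (which identifies $\sum_{\mu \in \wcomp_n}\dim \L(\mu)\,\xx^{\mu}$ with $\sum_{\sigma \in \sym_n}e_{\lambda(\sigma)}(\xx)$). Since both ingredients are already available in the excerpt, this is essentially a matter of substituting one into the other and recognizing that the resulting identity is exactly the claimed exponential generating function identity.

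More precisely, I would first recall the statement of Theorem~\ref{theorem:multiplicativeinverse}:
\begin{align*}
 \sum_{n\ge0}\sum_{\mu \in \wcomp_{n}}\dim \L(\mu)\,\xx^{\mu}\frac{y^n}{n!} =\left [ \sum_{n\ge0}(-1)^{n} h_{n}(\xx)\frac{y^n}{n!} \right ] ^{-1}.
\end{align*}
Next I would substitute, for each fixed $n$, the evaluation of the inner sum $\sum_{\mu \in \wcomp_n}\dim\L(\mu)\,\xx^{\mu}$ provided by Theorem~\ref{theorem:dimensionstype}, namely $\sum_{\sigma \in \sym_n}e_{\lambda(\sigma)}(\xx)$. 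This transforms the left-hand side of Theorem~\ref{theorem:multiplicativeinverse} into $\sum_{n\ge 0}\bigl(\sum_{\sigma \in \sym_n}e_{\lambda(\sigma)}(\xx)\bigr)\frac{y^n}{n!}$, which is precisely the right-hand side of Theorem~\ref{theorem:exponentialmultiplicative}. Taking multiplicative inverses of both sides of the resulting equation (equivalently, just reading the equation from right to left) yields the stated formula.

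The only point requiring a word of justification is that the substitution is legitimate as an identity of formal power series: the series $\sum_{n\ge0}(-1)^n h_n(\xx)\frac{y^n}{n!}$ has constant term $1$ (the $n=0$ term), so it is invertible in the ring $\Lambda_{\QQ}[[y]]$, and the inverse is unique; hence the two characterizations of $\sum_{n\ge0}\bigl(\sum_\mu \dim\L(\mu)\xx^\mu\bigr)\frac{y^n}{n!}$ can be freely equated. I do not anticipate any genuine obstacle here — the content of the theorem is entirely carried by Theorems~\ref{theorem:multiplicativeinverse} and~\ref{theorem:dimensionstype}, and this final statement is a repackaging of their combination into the form previously recorded in \cite{Dleon2016}. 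If anything, the only care needed is bookkeeping: making sure the $\frac{y^n}{n!}$ weights line up on both sides and that one is comparing coefficients of the same monomials $\xx^\mu y^n$.

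\begin{proof}
By Theorem~\ref{theorem:dimensionstype}, for every $n \ge 0$ we have
\begin{align*}
 \sum_{\mu \in \wcomp_{n}}\dim \L(\mu)\,\xx^{\mu}= \sum_{\sigma \in \sym_n} e_{\lambda(\sigma)}(\xx).
\end{align*}
Substituting this into the left-hand side of the identity in Theorem~\ref{theorem:multiplicativeinverse} gives
\begin{align*}
 \sum_{n\ge0}\left(\sum_{\sigma \in \sym_n} e_{\lambda(\sigma)}(\xx)\right)\frac{y^n}{n!} =\left [ \sum_{n\ge0}(-1)^{n} h_{n}(\xx)\frac{y^n}{n!} \right ] ^{-1}.
\end{align*}
Since the power series $\sum_{n\ge0}(-1)^{n} h_{n}(\xx)\frac{y^n}{n!}$ has constant term $1$, it is a unit in the ring of formal power series in $y$ with coefficients in $\Lambda_{\QQ}$, and taking the multiplicative inverse of both sides yields
\begin{align*}
   \left(\sum_{n\ge0}(-1)^{n}h_{n}(\xx)\dfrac{y^n}{n!}\right )^{-1}=\sum_{n\ge0}\left(\sum_{\sigma \in \sym_n}e_{\lambda(\sigma)}(\xx)\right)\dfrac{y^n}{n!},
\end{align*}
as claimed.
\end{proof}
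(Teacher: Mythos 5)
Your proof is correct and follows exactly the paper's own route: the paper also obtains Theorem~\ref{theorem:exponentialmultiplicative} by rewriting Theorem~\ref{theorem:multiplicativeinverse} using the identification $\sum_{\mu \in \wcomp_n}\dim\L(\mu)\,\xx^{\mu}=\sum_{\sigma\in\sym_n}e_{\lambda(\sigma)}(\xx)$ from Theorem~\ref{theorem:dimensionstype}.
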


The following Proposition is a standard result in the theory of symmetric functions and can be 
obtained as a special case of a more general Theorem of E\u gecio\u glu and Remmel 
in \cite{OmerRemmel1991}.

\begin{proposition}[{c.f. \cite[Theorem 2.3]{OmerRemmel1991}}]\label{proposition:htoe} For every 
$n\ge 
0$,
\begin{align*}
 h_n(\xx)=(-1)^n\sum_{\nu \in \comp_n}(-1)^{\ell(\nu)}e_{\nu}(\xx)
\end{align*}
\end{proposition}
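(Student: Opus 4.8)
The plan is to prove the identity
$$h_n(\xx)=(-1)^n\sum_{\nu \in \comp_n}(-1)^{\ell(\nu)}e_{\nu}(\xx)$$
by relating both sides through the classical generating function identity $H(t)E(-t)=1$, where $H(t)=\sum_{m\ge 0}h_m(\xx)t^m$ and $E(t)=\sum_{m\ge 0}e_m(\xx)t^m$. First I would recall that this fundamental relation gives $E(-t)=\left(\sum_{m\ge 0}(-1)^m e_m(\xx)t^m\right)$ and hence $H(t)=E(-t)^{-1}$. Writing $E(-t)=1-\sum_{m\ge 1}(-1)^{m-1}e_m(\xx)t^m$, I would expand the inverse as a geometric series:
$$H(t)=\sum_{k\ge 0}\left(\sum_{m\ge 1}(-1)^{m-1}e_m(\xx)t^m\right)^{k}.$$

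The key combinatorial step is then to identify the coefficient of $t^n$ on the right-hand side. Expanding the $k$-th power, the contribution comes from choosing a composition $\nu=(\nu(1),\dots,\nu(k))\in\comp_n$ with $\ell(\nu)=k$ parts, each part $\nu(i)\ge 1$, contributing the factor $(-1)^{\nu(i)-1}e_{\nu(i)}(\xx)$. Multiplying these together over the $k$ parts yields $(-1)^{\sum_i(\nu(i)-1)}e_{\nu}(\xx)=(-1)^{n-k}e_{\nu}(\xx)=(-1)^{n-\ell(\nu)}e_{\nu}(\xx)$. Summing over all compositions $\nu$ of $n$ (which automatically sums over all $k\ge 1$, with the empty composition handled by the $n=0$ base case), I obtain
$$h_n(\xx)=\sum_{\nu\in\comp_n}(-1)^{n-\ell(\nu)}e_{\nu}(\xx)=(-1)^n\sum_{\nu\in\comp_n}(-1)^{\ell(\nu)}e_{\nu}(\xx),$$
which is the desired formula. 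The $n=0$ case is the trivial identity $h_0=1$ corresponding to the empty composition.

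I do not anticipate a genuine obstacle here; the main point requiring care is bookkeeping the signs correctly — specifically tracking that each part of the composition contributes a sign $(-1)^{\nu(i)-1}$ rather than $(-1)^{\nu(i)}$, so that the total sign collapses to $(-1)^{n-\ell(\nu)}$. An alternative route, avoiding generating functions entirely, would be to induct on $n$ using the Newton-type recursion $h_n=\sum_{i=1}^n(-1)^{i-1}e_i h_{n-i}$ (valid for $n\ge 1$), which follows directly from $H(t)E(-t)=1$ by extracting the coefficient of $t^n$; substituting the inductive hypothesis for each $h_{n-i}$ and reindexing the resulting sum over compositions gives the claim. Either approach is short, so I would present the generating-function version for transparency. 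I would also remark, as the statement does, that this is the specialization of \cite[Theorem 2.3]{OmerRemmel1991} obtained by restricting attention to the $h$-to-$e$ transition, so one may alternatively simply cite that result.
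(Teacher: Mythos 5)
Your derivation is correct. The paper itself does not supply a proof of this proposition --- it presents it as a standard fact, citing it as a special case of \cite[Theorem 2.3]{OmerRemmel1991}. Your argument via the classical relation $H(t)E(-t)=1$, rewriting $H(t)=E(-t)^{-1}$ and expanding geometrically, is the standard elementary derivation and the sign bookkeeping is right: each part $\nu(i)\ge 1$ contributes $(-1)^{\nu(i)-1}e_{\nu(i)}$, so a composition with $\ell(\nu)$ parts picks up total sign $(-1)^{n-\ell(\nu)}=(-1)^n(-1)^{\ell(\nu)}$. This fills in cleanly what the paper delegates to the citation.
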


Let $E_2:\Lambda \rightarrow \QQ[t]$ be the map defined by $E_2(e_i(\xx))=t$ for all $i \ge 1$ and 
$E_2(1)=1$. Since $E_2$ is defined on the generators $e_i$ it is immediate to check that 
$E_2$ is an algebra 
homomorphism or \emph{specialization}.
In the same manner it is also easy to verify that the specialization $E_2$ extends to a 
specialization $E_2:\Lambda[[y]] \rightarrow \QQ[t][[y]]$ in the algebra of power series in 
$y$ with symmetric function coefficients in $\Lambda$ (with variables $\xx$) defined by applying 
$E_2$ coefficientwise.
Note that for any $\lambda\vdash n$ we have that 
\begin{align}\label{equation:specializee}
E_2(e_{\lambda}(\xx))=t^{\ell(\lambda)}. 
\end{align}

\begin{lemma}For every $n \ge 1$,
\begin{align}\label{equation:specializeh}
 E_2(h_n(\xx))= t(t-1)^{n-1}.
\end{align}
\end{lemma}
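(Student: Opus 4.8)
The plan is to apply the specialization $E_2$ directly to the expansion of $h_n$ into elementary symmetric functions furnished by Proposition \ref{proposition:htoe}, and then to carry out the resulting elementary enumeration over compositions of $n$.

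First I would invoke Proposition \ref{proposition:htoe} to write
$$h_n(\xx)=(-1)^n\sum_{\nu \in \comp_n}(-1)^{\ell(\nu)}e_{\nu}(\xx).$$
Since $E_2$ is an algebra homomorphism and $e_{\nu}=e_{\nu(1)}e_{\nu(2)}\cdots e_{\nu(\ell(\nu))}$, equation \eqref{equation:specializee} gives $E_2(e_{\nu}(\xx))=t^{\ell(\nu)}$. Applying $E_2$ therefore yields
$$E_2(h_n(\xx))=(-1)^n\sum_{\nu \in \comp_n}(-t)^{\ell(\nu)}.$$
Next I would organize the sum according to the number of parts: there are exactly $\binom{n-1}{k-1}$ compositions of $n$ with $k$ parts, so
$$E_2(h_n(\xx))=(-1)^n\sum_{k=1}^{n}\binom{n-1}{k-1}(-t)^k=(-1)^n(-t)\sum_{j=0}^{n-1}\binom{n-1}{j}(-t)^j=(-1)^n(-t)(1-t)^{n-1}.$$
Finally, collecting signs, $(-1)^n(-t)=(-1)^{n+1}t$ and $(1-t)^{n-1}=(-1)^{n-1}(t-1)^{n-1}$, whence $E_2(h_n(\xx))=(-1)^{2n}t(t-1)^{n-1}=t(t-1)^{n-1}$, which is equation \eqref{equation:specializeh}.

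There is no real obstacle here; the only thing demanding care is the bookkeeping of the three sign contributions (the $(-1)^n$ from Proposition \ref{proposition:htoe}, the $(-1)^{\ell(\nu)}$ in the summand, and the $(-1)^{n-1}$ needed to turn $(1-t)^{n-1}$ into $(t-1)^{n-1}$), together with the standard count of compositions of $n$ by number of parts. If one prefers to bypass the composition count altogether, an alternative is to apply $E_2$ coefficientwise to the identity $\bigl(\sum_{n\ge0}(-1)^ne_n(\xx)z^n\bigr)\bigl(\sum_{n\ge0}h_n(\xx)z^n\bigr)=1$: since $E_2\bigl(\sum_{n\ge0}(-1)^ne_n(\xx)z^n\bigr)=1-\frac{tz}{1+z}=\frac{1+(1-t)z}{1+z}$, one obtains $E_2\bigl(\sum_{n\ge0}h_n(\xx)z^n\bigr)=\frac{1+z}{1+(1-t)z}=(1+z)\sum_{m\ge0}(t-1)^mz^m$, and the coefficient of $z^n$ for $n\ge1$ is $(t-1)^n+(t-1)^{n-1}=t(t-1)^{n-1}$.
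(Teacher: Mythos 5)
Your main argument is exactly the paper's proof: apply $E_2$ to the expansion of $h_n$ from Proposition \ref{proposition:htoe}, count compositions of $n$ with $k$ parts as $\binom{n-1}{k-1}$, and close with the binomial theorem; your sign bookkeeping at the end is just a more explicit version of the paper's final line $(-1)^n(-t)(1-t)^{n-1}=t(t-1)^{n-1}$. The alternative you sketch via the identity $\bigl(\sum_n(-1)^ne_nz^n\bigr)\bigl(\sum_nh_nz^n\bigr)=1$ is a genuinely different and arguably cleaner route, since it bypasses Proposition \ref{proposition:htoe} and the composition count entirely in favor of a rational-function computation, but both are correct and the paper happens to take the first.
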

\begin{proof}
Using Proposition \ref{proposition:htoe} and equation (\ref{equation:specializee}),
 \begin{align*}
  E_2(h_n(\xx))&=E_2\left((-1)^n\sum_{\nu \in 
\comp_n}(-1)^{\ell(\nu)}e_{\nu}(\xx)\right)\\
&=(-1)^n\sum_{\nu \in \comp_n}(-t)^{\ell(\nu)}\\
&=(-1)^n\sum_{k=1}^{n}\binom{n-1}{k-1}(-t)^{k}\\
&=(-1)^n(-t)(1-t)^{n-1},
 \end{align*}
 where $\binom{n-1}{k-1}$ is the number of compositions of $n$ into $k$ parts.
\end{proof}

Applying the specialization $E_2$ to the symmetric function
\begin{align*}
 \sum_{\sigma \in \sym_n}e_{\lambda(\sigma)}(\xx),
\end{align*}using 
equation 
(\ref{equation:specializee}) and the observation that 
$\ell(\lambda(\theta))=\des(\theta)+1$, where $\des(\sigma)=|\{i \in 
[n-1]\,\mid\,\sigma(i)>\sigma(i+1)\}|$, we obtain 
\begin{align*}
 E_2\left(\sum_{\sigma \in \sym_n}e_{\lambda(\sigma)}(\xx)\right)
 &=\sum_{\sigma \in \sym_n}t^{\ell(\lambda(\sigma))}\\
 &=\sum_{\sigma \in \sym_n}t^{\des(\sigma)+1}\\
&=A_{n}(t),
\end{align*}
the \emph{$n$-th Eulerian polynomial}.
The reader can verify now that by applying $E_2$ to the identity in Theorem 
\ref{theorem:exponentialmultiplicative} we obtain the classical Euler's exponential formula.
\begin{theorem}[\cite{Riordan1951}]\label{theorem:riordan} We have
 \begin{align*} 
   \dfrac{1-t}{1-te^{(1-t)y}}= \sum_{n\ge0}A_n(t)\dfrac{y^n}{n!}.
 \end{align*}
\end{theorem}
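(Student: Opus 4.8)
The plan is to apply the specialization $E_2$ to the identity of Theorem~\ref{theorem:exponentialmultiplicative}, as anticipated in the paragraph preceding the statement. Recall that $E_2$ extends to an algebra homomorphism $E_2:\Lambda[[y]]\to\QQ[t][[y]]$ acting coefficientwise on powers of $y$. First I would observe that the series $\sum_{n\ge 0}(-1)^n h_n(\xx)\frac{y^n}{n!}$ has constant term $h_0(\xx)=1$, hence so does its image under $E_2$; since $E_2$ is a continuous algebra homomorphism for the $y$-adic topology, it commutes with the formation of the multiplicative inverse of such a series, so that applying $E_2$ to the left side of Theorem~\ref{theorem:exponentialmultiplicative} produces $\bigl(E_2(\sum_{n\ge 0}(-1)^n h_n(\xx)\frac{y^n}{n!})\bigr)^{-1}$. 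If one prefers to avoid this point entirely, one may instead rewrite Theorem~\ref{theorem:exponentialmultiplicative} in cleared form, $\bigl(\sum_{n\ge0}(-1)^n h_n(\xx)\frac{y^n}{n!}\bigr)\bigl(\sum_{n\ge0}\sum_{\sigma\in\sym_n} e_{\lambda(\sigma)}(\xx)\frac{y^n}{n!}\bigr)=1$, and apply $E_2$, which is manifestly multiplicative, to this power-series identity directly.

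Second, I would compute each side. For the right side we already have, from \eqref{equation:specializee} and the identity $\ell(\lambda(\sigma))=\des(\sigma)+1$, that $E_2\bigl(\sum_{\sigma\in\sym_n} e_{\lambda(\sigma)}(\xx)\bigr)=\sum_{\sigma\in\sym_n} t^{\des(\sigma)+1}=A_n(t)$, so the right side becomes $\sum_{n\ge0} A_n(t)\frac{y^n}{n!}$. For the left side I would invoke equation~\eqref{equation:specializeh}, namely $E_2(h_0(\xx))=1$ and $E_2(h_n(\xx))=t(t-1)^{n-1}$ for $n\ge 1$, and sum the resulting exponential series:
\begin{align*}
E_2\Bigl(\sum_{n\ge 0}(-1)^n h_n(\xx)\tfrac{y^n}{n!}\Bigr)
=1+\frac{t}{t-1}\sum_{n\ge 1}\frac{(-(t-1)y)^n}{n!}
=1+\frac{t}{t-1}\bigl(e^{(1-t)y}-1\bigr)
=\frac{t\,e^{(1-t)y}-1}{t-1}.
\end{align*}
Taking the multiplicative inverse gives $\dfrac{t-1}{t\,e^{(1-t)y}-1}=\dfrac{1-t}{1-t\,e^{(1-t)y}}$, which is the left side of Theorem~\ref{theorem:riordan}. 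Equating the two images under $E_2$ then yields the claimed formula.

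The only step that is not a routine manipulation is the compatibility of $E_2$ with multiplicative inversion of power series in $y$, and that is easily settled (constant term $1$, $y$-adic continuity) or bypassed via the cleared identity above; the remainder is just the elementary summation of the exponential series displayed. I therefore do not expect any genuine obstacle.
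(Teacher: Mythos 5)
Your proof is correct and follows exactly the route the paper intends: it applies the specialization $E_2$ to Theorem~\ref{theorem:exponentialmultiplicative}, using equations~\eqref{equation:specializee} and \eqref{equation:specializeh}, and the paper itself merely says ``the reader can verify now'' that this yields Euler's formula. You have supplied the verification, including the correct handling of the multiplicative inverse under $E_2$, so nothing is missing.
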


On the other hand if we apply the specialization $E_2$ directly to Theorem 
\ref{theorem:explicitmultiplicativeinverse} we obtain the equation
\begin{align}\label{equation:specializatione2only}
 \Bigl 
(1-h_1(\yy)+\sum_{n\ge 2}(-1)^n t(t-1)^{n-1}h_{n}(\yy)
\Bigr)^{-1}=\sum_{n\ge 0} \sum_{\alpha \in \comp_{n}}s_{H(\alpha)}(\yy)t^{\ell(\alpha)}.
\end{align}

The terms in the right-hand side of equation \ref{equation:specializatione2only} have a 
particular meaning. Indeed, the regular representation 
$\CC[\sym_n]$ of $\sym_n$ decomposes nicely into Foulkes representations.
\begin{theorem}[\cite{Foulkes1976}]\label{theorem:foulkes} We have that
 \begin{align*}
  h_1(\yy)^n=\ch(\mathbf{1}_n^{\otimes n} \uparrow_{\sym_1^{\times 
n}}^{\sym_n})=\ch\CC[\sym_n]= \sum_{\alpha \in \comp_{n}}s_{H(\alpha)}(\yy).
 \end{align*}
\end{theorem}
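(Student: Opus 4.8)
We have
$$h_1(\yy)^n = \ch\left(\mathbf{1}_n^{\otimes n}\uparrow_{\sym_1^{\times n}}^{\sym_n}\right) = \ch\CC[\sym_n] = \sum_{\alpha\in\comp_n} s_{H(\alpha)}(\yy).$$

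Let me think about how to prove this.

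The plan is to verify the three displayed equalities in turn. The first equality is an iteration of Proposition~\ref{proposition:tensorch}: since $\sym_1$ is the trivial group, $\mathbf{1}_1^{\otimes n}$ is simply the one-dimensional trivial representation of $\sym_1^{\times n}$ and $\ch\mathbf{1}_1=h_1(\yy)$, so applying multiplicativity of $\ch$ $n-1$ times gives $\ch(\mathbf{1}_1^{\otimes n}\uparrow_{\sym_1^{\times n}}^{\sym_n})=(\ch\mathbf{1}_1)^n=h_1(\yy)^n$. For the second equality I would use that $\mathbf{1}_1^{\otimes n}$ is the trivial representation of the trivial subgroup $\sym_1^{\times n}\le\sym_n$, and that inducing the trivial representation from the trivial subgroup of a group $G$ yields the regular representation $\CC[G]$; taking $G=\sym_n$ this is $\CC[\sym_n]$. (Both equalities can in fact be obtained at once from the power-sum expansion of $\ch$ recalled in Section~\ref{section:frobeniuscharacteristic}: the character of $\CC[\sym_n]$ vanishes off the identity and equals $n!$ there, so $\ch\CC[\sym_n]=\frac{1}{n!}\cdot n!\cdot p_{1^n}(\yy)=p_1(\yy)^n=h_1(\yy)^n$.)

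It remains to prove $\ch\CC[\sym_n]=\sum_{\alpha\in\comp_n}s_{H(\alpha)}(\yy)$, which is the actual content of Foulkes' theorem. I would start from the decomposition $\CC[\sym_n]\cong_{\sym_n}\bigoplus_{\lambda\vdash n}(S^{\lambda})^{\oplus f^{\lambda}}$, where $f^{\lambda}:=\dim S^{\lambda}$ is the number of standard Young tableaux of shape $\lambda$, which gives $\ch\CC[\sym_n]=\sum_{\lambda\vdash n}f^{\lambda}s_{\lambda}(\yy)$. On the other hand, Proposition~\ref{proposition:skewhook} expands $s_{H(\alpha)}(\yy)=\sum_{\lambda\vdash n}c_{H(\alpha),\lambda}s_{\lambda}(\yy)$, where $c_{H(\alpha),\lambda}$ is the number of standard Young tableaux of shape $\lambda$ whose descent set equals $\des(H(\alpha))$. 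Summing over $\alpha\in\comp_n$ and comparing coefficients of $s_{\lambda}(\yy)$, the identity reduces to showing $\sum_{\alpha\in\comp_n}c_{H(\alpha),\lambda}=f^{\lambda}$ for every $\lambda\vdash n$.

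To obtain this I would use the classical bijection between $\comp_n$ and subsets of $[n-1]$ sending $\alpha=(\alpha(1),\dots,\alpha(\ell))$ to the set of partial sums $\{\alpha(1),\ \alpha(1)+\alpha(2),\ \dots,\ \alpha(1)+\cdots+\alpha(\ell-1)\}$, together with the observation that, under the left-to-right, bottom-to-top numbering of the cells of the skew hook $H(\alpha)$, consecutive cells $i$ and $i+1$ lie in a common horizontal run precisely when $i$ is not one of these partial sums, and when $i+1$ starts a new run it sits above cell $i$; hence $\des(H(\alpha))$ is exactly this partial-sum set. Consequently, as $\alpha$ ranges over $\comp_n$ the sets $\des(H(\alpha))$ range over all subsets of $[n-1]$, each exactly once, so $\sum_{\alpha\in\comp_n}c_{H(\alpha),\lambda}=\sum_{D\subseteq[n-1]}(\text{number of standard Young tableaux of shape }\lambda\text{ with descent set }D)=f^{\lambda}$, as required. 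The one step needing genuine care is this last identification of $\des(H(\alpha))$ with the partial-sum set of $\alpha$ — that is, getting the ribbon geometry and the cell numbering consistent with the definition of $\des(H)$ given before Proposition~\ref{proposition:skewhook}; everything else is standard representation theory or the cited combinatorial facts. (Even if a convention forces the map $\alpha\mapsto\des(H(\alpha))$ to be composed with complementation in $[n-1]$, it remains a bijection onto the subsets of $[n-1]$ and the coefficient sum is unchanged.)
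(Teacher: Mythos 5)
The paper states this as a cited result of Foulkes and gives no proof of its own, so there is no in-paper argument to compare against. Your proof is correct and is the standard one: the first two equalities follow at once from multiplicativity of $\ch$ under induction product (equivalently, from the power-sum formula $\ch(V)=\frac{1}{n!}\sum_{\sigma}\chi^V(\sigma)p_{\gamma(\sigma)}$ applied to the regular character, which is supported on the identity), and for the third you correctly reduce, via $\ch\CC[\sym_n]=\sum_{\lambda\vdash n}f^{\lambda}s_{\lambda}$ together with Proposition~\ref{proposition:skewhook}, to the observation that $\alpha\mapsto\des(H(\alpha))$ is the partial-sum bijection from $\comp_n$ onto the subsets of $[n-1]$, so that $\sum_{\alpha\in\comp_n}c_{H(\alpha),\lambda}$ simply classifies all $f^{\lambda}$ standard Young tableaux of shape $\lambda$ by their descent set. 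Your closing remark that the argument is insensitive to a possible complementation convention is accurate and the right thing to flag; the step is indeed consistent with the paper's left-to-right, bottom-to-top numbering of ribbon cells, and the proof is complete.
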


If we use Theorem \ref{theorem:foulkes} and specialize further in equation 
(\ref{equation:specializatione2only}) by setting $t=1$ we obtain 
\begin{align}\label{equation:specializatione2andt1}
 (1-h_1(\yy))^{-1}=\sum_{n\ge 0} \sum_{\alpha \in 
\comp_{n}}s_{H(\alpha)}(\yy)=\sum_{n\ge 0}\ch(\CC[\sym_n]),
\end{align}
indicating that the representation 
\begin{align*}
 \L_S(n)\simeq_{\sym_n}\bigoplus_{\substack{\mu \in \wcomp_n\\ \supp(\mu) 
\subseteq S}} \L(\mu)
\end{align*}
is an extension of the regular representation $\CC[\sym_n]$. In Figure 
\ref{figure:diagramspecializations} the reader can appreciate the relations between the different 
identities and specializations discussed in this section.

\begin{figure}[ht]
\everymath{\displaystyle}
\begin{tikzpicture}[scale=1.5]
\tikzstyle{every node}=[scale=0.75]
\node (a) at (3.5,5) {$\Bigl 
(\sum_{n\ge 0}(-1)^n h_{n}(\xx)h_{n}(\yy)
\Bigr)^{-1}=\sum_{n\ge 0} \sum_{\alpha \in \comp_{n}}e_{\lambda(\alpha)}(\xx)s_{H(\alpha)}(\yy)$ 
(Theorem \ref*{theorem:explicitmultiplicativeinverse})};
\node (b) at (8,3) {$\Bigl 
(\sum_{n\ge 0}(-1)^n h_{n}(\xx)\dfrac{y^n}{n!}
\Bigr)^{-1}=\sum_{n\ge 0} \sum_{\sigma \in \sym_{n}}e_{\lambda(\sigma)}(\xx)\dfrac{y^n}{n!}$ 
(\cite{Dleon2016})};
\draw[->] (a) -- (b)node [midway, above, sloped] (T1) {$p_k(\xx)=\delta_{1,k}$};

\node (c) at (3.5,2) {$\Bigl 
(1-h_1(\yy)+\sum_{n\ge 2}(-1)^n t(t-1)^{n-1}h_{n}(\yy)
\Bigr)^{-1}=\sum_{n\ge 0} \sum_{\alpha \in \comp_{n}}s_{H(\alpha)}(\yy)t^{\ell(\alpha)}$ (Equation 
\ref*{equation:specializatione2only})};
\draw[->] (a) -- (c)node [midway, above, sloped] (T2) {$e_k(\xx)=t$};

\node (e) at (7,0) {$\dfrac{1-t}{1-te^{(1-t)y}}= \sum_{n\ge0}A_n(t)\dfrac{y^n}{n!}$ 
(\cite{Riordan1951})};
\draw[->] (b) -- (e)node [midway, above, sloped] (T3) {$e_k(\xx)=t$};

\draw[->] (c) -- (e)node [midway, above, sloped] (T4) {$p_k(\xx)=\delta_{1,k}$};

\node (d) at (3.5,-1) {$(1-h_1(\yy))^{-1}=\sum_{n\ge 0} \sum_{\alpha \in 
\comp_{n}}s_{H(\alpha)}(\yy)=\sum_{n\ge 0}\ch(\CC[\sym_n])$ (Equation 
\ref*{equation:specializatione2andt1})};
\draw[->] (c) -- (d)node [midway, above, sloped] (T1) {$t=1$};
\end{tikzpicture}
\caption{Diagram of specializations}\label{figure:diagramspecializations}
\end{figure}
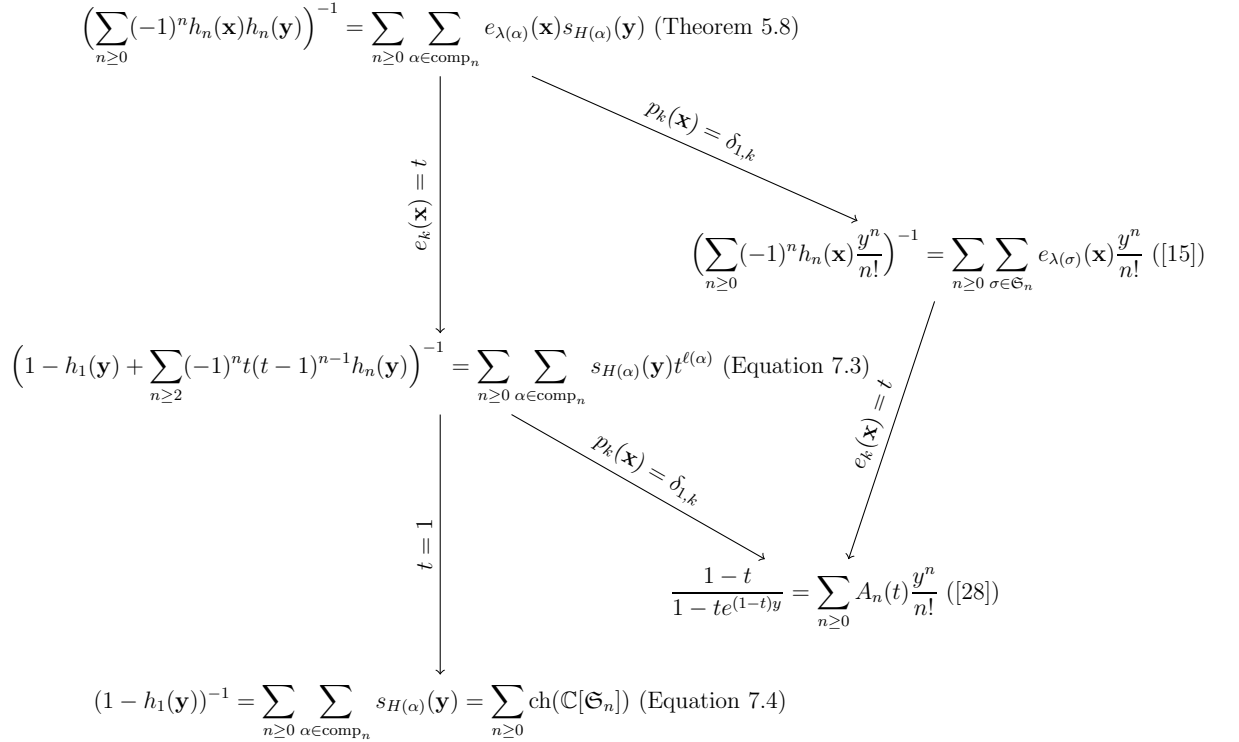

\section*{Acknowledgments}
The author is grateful to Ira Gessel for the very valuable discussions and explanations 
related to the technique for inversion of power series involved in the proof of 
Theorem \ref{theorem:explicitmultiplicativeinverse}. The author would like to thank as well an 
anonymous reviewer for their helpful comments and suggestions. Most of the work on this project was 
done while the author was a postdoctoral scholar at University of Kentucky and the author is 
grateful for its support.

\bibliographystyle{abbrv}
\bibliography{altalgebra}

\begin{thebibliography}{10}

\bibitem{Baclawski1975}
K.~Bac{\l}awski.
\newblock {Whitney numbers of geometric lattices}.
\newblock {\em Advances in Math.}, 16:125--138, 1975.

\bibitem{Barcelo1990}
H.~Barcelo.
\newblock {On the action of the symmetric group on the free {L}ie algebra and
  the partition lattice}.
\newblock {\em J. Combin. Theory Ser. A}, 55(1):93--129, 1990.

\bibitem{Bjorner1980}
A.~Bj{\"o}rner.
\newblock {Shellable and {C}ohen-{M}acaulay partially ordered sets}.
\newblock {\em Trans. Amer. Math. Soc.}, 260(1):159--183, 1980.

\bibitem{BjornerWachs1983}
A.~Bj{\"o}rner and M.~L. Wachs.
\newblock {On lexicographically shellable posets}.
\newblock {\em Trans. Amer. Math. Soc.}, 277(1):323--341, 1983.

\bibitem{BjornerWachs1996}
A.~Bj{\"o}rner and M.~L. Wachs.
\newblock {Shellable nonpure complexes and posets. {I}}.
\newblock {\em Trans. Amer. Math. Soc.}, 348(4):1299--1327, 1996.

\bibitem{BjornerWelker2005}
A.~Bj{\"o}rner and V.~Welker.
\newblock {Segre and {R}ees products of posets, with ring-theoretic
  applications}.
\newblock {\em J. Pure Appl. Algebra}, 198(1-3):43--55, 2005.

\bibitem{CarlitzScovilleVaughan1976}
L.~Carlitz, R.~Scoville, and T.~Vaughan.
\newblock {Enumeration of pairs of sequences by rises, falls and levels}.
\newblock {\em Manuscripta Math.}, 19(3):211--243, 1976.

\bibitem{DotsenkoKhoroshkin2007}
V.~V. Dotsenko and A.~S. Khoroshkin.
\newblock {Character formulas for the operad of a pair of compatible brackets
  and for the bi-{H}amiltonian operad}.
\newblock {\em Funktsional. Anal. i Prilozhen.}, 41(1):1--22, 96, 2007.

\bibitem{OmerRemmel1991}
{\"O}.~E\u{g}ecio\u{g}lu and J.~B. Remmel.
\newblock {Brick tabloids and the connection matrices between bases of
  symmetric functions}.
\newblock {\em Discrete Appl. Math.}, 34(1-3):107--120, 1991.
\newblock Combinatorics and theoretical computer science (Washington, DC,
  1989).

\bibitem{Foulkes1976}
H.~O. Foulkes.
\newblock {Enumeration of permutations with prescribed up-down and inversion
  sequences}.
\newblock {\em Discrete Math.}, 15(3):235--252, 1976.

\bibitem{Fresse2004}
B.~Fresse.
\newblock {Koszul duality of operads and homology of partition posets}.
\newblock In {\em {Homotopy theory: relations with algebraic geometry, group
  cohomology, and algebraic {$K$}-theory}}, volume 346 of {\em {Contemp.
  Math.}}, pages 115--215. Amer. Math. Soc., Providence, RI, 2004.

\bibitem{Froberg1975}
R.~Fr{\"o}berg.
\newblock {Determination of a class of {P}oincar{\'e} series}.
\newblock {\em Math. Scand.}, 37(1):29--39, 1975.

\bibitem{Froberg1999}
R.~Fr{\"o}berg.
\newblock {Koszul algebras}.
\newblock In {\em {Advances in commutative ring theory ({F}ez, 1997)}}, volume
  205 of {\em {Lecture Notes in Pure and Appl. Math.}}, pages 337--350. Dekker,
  New York, 1999.

\bibitem{Gessel1977}
I.~M. Gessel.
\newblock {\em {Generating functions and enumeration of sequences}}.
\newblock ProQuest LLC, Ann Arbor, MI, 1977.
\newblock Thesis (Ph.D.)--Massachusetts Institute of Technology.

\bibitem{Dleon2016}
R.~S. {Gonz{\'a}lez D'Le{\'o}n}.
\newblock {A family of symmetric functions associated with Stirling
  permutations}.
\newblock {\em ArXiv e-prints}, June 2015.

\bibitem{Dleon2014}
R.~S. {Gonz{\'a}lez D'Le{\'o}n}.
\newblock {On the free {L}ie algebra with multiple brackets}.
\newblock {\em Adv. in Appl. Math.}, 79:37--97, 2016.

\bibitem{DleonWachs2013a}
R.~S. {Gonz{\'a}lez D'Le{\'o}n} and M.~L. Wachs.
\newblock {On the (co)homology of the poset of weighted partitions}.
\newblock {\em Trans. Amer. Math. Soc.}, 368(10):6779--6818, 2016.

\bibitem{GottliebWachs2000}
E.~Gottlieb and M.~L. Wachs.
\newblock {Cohomology of {D}owling lattices and {L}ie (super)algebras}.
\newblock {\em Adv. in Appl. Math.}, 24(4):301--336, 2000.

\bibitem{HanlonWachs1995}
P.~Hanlon and M.~L. Wachs.
\newblock {On {L}ie {$k$}-algebras}.
\newblock {\em Adv. Math.}, 113(2):206--236, 1995.

\bibitem{JamesKerber1981}
G.~James and A.~Kerber.
\newblock {\em {The representation theory of the symmetric group}}, volume~16
  of {\em {Encyclopedia of Mathematics and its Applications}}.
\newblock Addison-Wesley Publishing Co., Reading, Mass., 1981.
\newblock With a foreword by P. M. Cohn, With an introduction by Gilbert de B.
  Robinson.

\bibitem{Joyal1986}
A.~Joyal.
\newblock {Foncteurs analytiques et esp{\`e}ces de structures}.
\newblock In {\em {Combinatoire {\'e}num{\'e}rative ({M}ontreal, {Q}ue.,
  1985/{Q}uebec, {Q}ue., 1985)}}, volume 1234 of {\em {Lecture Notes in
  Math.}}, pages 126--159. Springer, Berlin, 1986.

\bibitem{Liu2010}
F.~Liu.
\newblock {Combinatorial bases for multilinear parts of free algebras with two
  compatible brackets}.
\newblock {\em J. Algebra}, 323(1):132--166, 2010.

\bibitem{LodayVallette2012}
J.-L. Loday and B.~Vallette.
\newblock {\em {Algebraic operads}}, volume 346 of {\em {Grundlehren der
  Mathematischen Wissenschaften [Fundamental Principles of Mathematical
  Sciences]}}.
\newblock Springer, Heidelberg, 2012.

\bibitem{Macdonald1995}
I.~G. Macdonald.
\newblock {\em {Symmetric functions and {H}all polynomials}}.
\newblock {Oxford Mathematical Monographs}. The Clarendon Press Oxford
  University Press, New York, second edition, 1995.
\newblock With contributions by A. Zelevinsky, Oxford Science Publications.

\bibitem{MendezYang1991}
M.~M{\'e}ndez and J.~Yang.
\newblock {M{\"o}bius species}.
\newblock {\em Adv. Math.}, 85(1):83--128, 1991.

\bibitem{Mendez2010}
M.~A. M{\'e}ndez.
\newblock {Koszul duality for monoids and the operad of enriched rooted trees}.
\newblock {\em Adv. in Appl. Math.}, 44(3):261--297, 2010.

\bibitem{Priddy1970}
S.~B. Priddy.
\newblock {Koszul resolutions}.
\newblock {\em Trans. Amer. Math. Soc.}, 152:39--60, 1970.

\bibitem{Riordan1951}
J.~Riordan.
\newblock {Triangular permutation numbers}.
\newblock {\em Proc. Amer. Math. Soc.}, 2:429--432, 1951.

\bibitem{Sagan2001}
B.~E. Sagan.
\newblock {\em {The symmetric group}}, volume 203 of {\em {Graduate Texts in
  Mathematics}}.
\newblock Springer-Verlag, New York, second edition, 2001.
\newblock Representations, combinatorial algorithms, and symmetric functions.

\bibitem{ShareshianWachs2009}
J.~Shareshian and M.~L. Wachs.
\newblock {Poset homology of {R}ees products, and {$q$}-{E}ulerian
  polynomials}.
\newblock {\em Electron. J. Combin.}, 16(2, Special volume in honor of Anders
  Bjorner):Research Paper 20, 29, 2009.

\bibitem{Solomon1968}
L.~Solomon.
\newblock {A decomposition of the group algebra of a finite {C}oxeter group}.
\newblock {\em J. Algebra}, 9:220--239, 1968.

\bibitem{Stanley1999}
R.~P. Stanley.
\newblock {\em {Enumerative combinatorics. {V}ol. 2}}, volume~62 of {\em
  {Cambridge Studies in Advanced Mathematics}}.
\newblock Cambridge University Press, Cambridge, 1999.
\newblock With a foreword by Gian-Carlo Rota and appendix 1 by Sergey Fomin.

\bibitem{Stanley2012}
R.~P. Stanley.
\newblock {\em {Enumerative combinatorics. {V}olume 1}}, volume~49 of {\em
  {Cambridge Studies in Advanced Mathematics}}.
\newblock Cambridge University Press, Cambridge, second edition, 2012.

\bibitem{Sundaram1994}
S.~Sundaram.
\newblock {The homology representations of the symmetric group on
  {C}ohen-{M}acaulay subposets of the partition lattice}.
\newblock {\em Adv. Math.}, 104(2):225--296, 1994.

\bibitem{Vallette2007}
B.~Vallette.
\newblock {Homology of generalized partition posets}.
\newblock {\em J. Pure Appl. Algebra}, 208(2):699--725, 2007.

\bibitem{Wachs1998}
M.~L. Wachs.
\newblock {On the (co)homology of the partition lattice and the free {L}ie
  algebra}.
\newblock {\em Discrete Math.}, 193(1-3):287--319, 1998.
\newblock Selected papers in honor of Adriano Garsia (Taormina, 1994).

\bibitem{Wachs1999}
M.~L. Wachs.
\newblock {Whitney homology of semipure shellable posets}.
\newblock {\em J. Algebraic Combin.}, 9(2):173--207, 1999.

\bibitem{Wachs2007}
M.~L. Wachs.
\newblock {Poset topology: tools and applications}.
\newblock In {\em {Geometric combinatorics}}, volume~13 of {\em {IAS/Park City
  Math. Ser.}}, pages 497--615. Amer. Math. Soc., Providence, RI, 2007.

\end{thebibliography}

\end{document}